\newtheorem{thm}{Theorem}[section]
\newtheorem{lem}{Lemma}[section]
\newtheorem{rem}{Remark}[section]
\newtheorem{prop}{Proposition}[section]
\newtheorem{defn}{Definition}[section]
\newcommand{\ov}{\overline}
\newcommand{\loc}{\mathrm{loc}}
\newcommand{\R}{\mathbb{R}}
\newcommand{\rn}{\mathbb{R}^{n}}
\newcommand{\eps}{\varepsilon}
\newcommand{\grad}{\nabla}
\newcommand{\De}{\Delta}
\newcommand{\ve}{\varepsilon}
\newcommand{\bra}[1]{\left(#1\right)}
\newcommand{\vp}{\varphi}
\title{Local and nonlocal singular Liouville equations in Euclidean spaces}
\author{Ali Hyder\thanks{The authors are supported by the Swiss National Science Foundation projects n. PP00P2-144669, PP00P2-170588/1 and P2BSP2-172064.}\\ {\small UBC Vancouver}\\ {\small \texttt{ali.hyder@math.ubc.ca} }\and Gabriele Mancini${}^*$\\  {\small Università di Padova}\\  {\small \texttt{gabriele.mancini@math.unipd.it} } \and Luca Martinazzi${}^*$ \\  {\small Università di Padova} \\ {\small \texttt{luca.martinzzi@math.unipd.it}}}
\begin{document}
\maketitle

\begin{abstract}
We study metrics of constant $Q$-curvature in the Euclidean space with a prescribed singularity at the origin, namely solutions to the equation
\begin{equation*}
(-\Delta)^\frac{n}{2}w=e^{nw}-c\delta_{0} \text{ on } \R^n,
\end{equation*}
under a finite volume condition. We analyze the asymptotic behaviour at infinity  and the existence of solutions for every $n\ge 3$ also in a supercritical regime. Finally, we state some open problems.
\end{abstract}

\section{Introduction}
In this paper we will deal with the classification of the solutions to the equation 
\begin{equation}\label{eq0}
(-\Delta)^\frac{n}{2}w=e^{nw}-c\delta_{0} \text{ on } \R^n,
\end{equation}
where $e^{n w}\in L^1(\R^n)$, $c\in\R$, $\delta_0$ is a Dirac mass at the origin and $n\ge 2$. Equations of this kind arise naturally in the study of the prescribed $Q$-curvature problem with singularities. We recall that the $Q$-curvature is a curvature of order $n$ that was introduced by \cite{Br,BO,FH} and in the last decades has been intensely studied in problems of conformal geometry. If $w$ is a solution of \eqref{eq0}, then $e^{2w}|dx|^2$ is a metric on $\R^n$ conformal to the Euclidean metric $|dx|^2$ and having $Q$-curvature equal to $1$ everywhere except at the origin, where it has a special kind of singularity. When $n = 2$, singularities of this kind are known as conical singularities and have been studied e.g. in \cite{Bar,PT,Tr}.

\medskip

Writing
$$ w=u+\frac{c}{\gamma_{n}}\log |x|,$$ 
we see that $u$ satisfies 
\begin{equation}\label{eq0b}
(-\Delta)^\frac{n}{2}u=|x|^{\frac{nc}{\gamma_{n}}}e^{nu} \text{ on } \rn,
\end{equation}
where $\gamma_n:=\frac{(n-1)!}{2}|S^{n}|$ is such that
$$(-\Delta)^\frac{n}{2}\left(\frac{1}{\gamma_n}\log \frac{1}{|x|}\right)=\delta_0\quad\text{ in }\rn,$$
and it will be convenient to study equation \eqref{eq0b}. More precisely, for a given $\alpha>-1$, we consider the  problem 
\begin{equation}\label{P0}
\begin{cases}
(-\Delta)^\frac{n}{2}u=|x|^{n\alpha}e^{nu} \text{ on } \rn,\\
\Lambda:=\int_{\rn}|x|^{n\alpha}e^{nu}\, dx <\infty.
\end{cases}
\end{equation}
Geometrically, if $u$ solves \eqref{P0}, then the metric $e^{2u}|dx|^2$ has $Q$-curvature $|x|^{n\alpha}$ and total $Q$-curvature $\Lambda$.
The parameter $\Lambda$ is also the volume of the metric $e^{2w}|dx|^2$, and it  plays a crucial role in the existence of solutions to \eqref{P0}.

In order to give a good definition of weak solutions to \eqref{P0}, we need to define $(-\Delta)^\frac n2$ for a sufficiently large class of functions. Let $\mathcal{S}(\R^n)$ be the Schwartz space of rapidly decreasing functions, and for $s>0$ set
$$L_s(\R^n):=\left\{w\in L^1_{\loc}(\R^n):\int_{\R^n}\frac{|w(x)|}{1+|x|^{n+2s}}dx<\infty\right\}.$$
Given a function $v\in L_s(\R^n)$, $s>0$, we define $(-\Delta)^s v$ as a tempered distribution as follows:
\begin{equation}\label{lapweak}
\langle(-\Delta)^s v, \varphi\rangle:=\int_{\rn}v(-\Delta)^s\varphi dx \qquad \text{for every }\varphi\in\mathcal{S}(\rn),
\end{equation}
where 
$$(-\Delta)^\frac{s}{2}\varphi:= \mathcal{F}^{-1}(|\xi|^{s}\mathcal{F}{\varphi}(\xi)),$$
and
$$\mathcal{F}\varphi(\xi):=\frac{1}{(2\pi)^\frac{n}{2}}\int_{\R^n}\varphi(x)e^{-ix\cdot\xi}dx$$ 
is the Fourier transform. The right-hand side of \eqref{lapweak} makes sense because
$$(-\Delta)^s\varphi(x)=O(|x|^{-n-2s})\quad \text{as }|x|\to\infty,$$
see e.g. \cite[Proposition 2.1]{H-clas}.

\begin{defn}\label{def-soln}
{Given $m\ge 1$ and a tempered distribution $f\in \mathcal{S}'(\R^n)$, we say that $u$ is a solution of $$(-\Delta)^\frac m2u=f\quad\text{in }\R^n$$
if the following holds. In case $m\ge 2$ even we require $u\in L^1_\loc(\R^n)$ and
$$\int_{\R^n}u(-\Delta)^{\frac{m}{2}}\vp dx=\langle f,\vp \rangle\quad\text{for every }\vp\in C^\infty_c(\R^n).$$ 
In case $m\ge 1$ odd we require $u\in L_{\frac m2}(\R^n)$ and
$$\int_{\R^n}u(-\Delta)^{\frac{m}{2}}\vp dx=\langle f,\vp \rangle\quad\text{for every }\vp\in \mathcal{S}(\R^n).$$ 
}
\end{defn}

As we shall see in Theorem \ref{thmreg}, weak solutions to \eqref{P0} are smooth away from the origin and at least $C^0$ near the origin.

In the classical case $n=2$, for any $\alpha>-1$ there are explicit solutions to \eqref{P0} taking the form
$$u(x)=\log\bra{\frac{2(\alpha+1)}{1+|x|^{2(\alpha+1)}} },\quad \Lambda=4\pi(\alpha+1).$$
With the scaling $u_\lambda(x)=u(\lambda^\frac{1}{1+\alpha} x)+\log\lambda$ we obtain the family of solutions
\begin{equation}\label{spherical2}
u(x)=\log\bra{\frac{2(\alpha+1)\lambda}{1+\lambda^2|x|^{2(\alpha+1)}} },\quad \Lambda=4\pi(\alpha+1),\quad \lambda>0,\alpha>-1.
\end{equation}
In fact, as shown by Chen-Li \cite{CL}, when $n=2$ for every solution to \eqref{P0} we necessarily have $\Lambda=4\pi(1+\alpha)$. In \cite{PT}  Prajapat-Tarantello proved that solutions as in \eqref{spherical2} exhaust the set of solutions of \eqref{P0} if $\alpha$ is not an integer, while when $\alpha\in\mathbb{N}$ instead there exist more solutions, all of which can be written using the complex notation in the form
\begin{equation}\label{spherical3}
u(z)=\log\bra{\frac{2(\alpha+1)\lambda}{1+\lambda^2|z^{\alpha+1} -\zeta|^2 }},\quad \lambda>0,\;\zeta\in\mathbb{C}.
\end{equation}
Interestingly the solutions in \eqref{spherical3} are not radially symmetric, except in the case $\zeta=0$ (when they reduce to \eqref{spherical2}) or $\alpha=0$ (in which case they are radial about the point $\zeta$) and blow up at the $\alpha+1$ roots of the complex equation $z^{\alpha+1}=\zeta$ as $\lambda\to \infty$.

When $n>2$, the structure of the set of solutions of \eqref{P0} becomes richer. If $\alpha=0$ one can still identify a special family of solutions, known as {\it normal} (cfr. Definition \ref{normal}) or {\it sperical} solutions, having an explicit expression similar to \eqref{spherical3} (with $\alpha=0$). These solutions have total $Q$-curvature $\Lambda=\Lambda_1:=(n-1)!|S^n|$ and are the only solutions of \eqref{P0} with $u=o(|x|^2)$ as $|x|\to +\infty$. However, in addition to these solutions there are also non-normal solutions, behaving polynomially at infinity and with $\Lambda$ not necessarily equal to $\Lambda_1$ see e.g. \cite{CC,H-odd,HM,JMMX,WY}.

When $n\ne 2$  and $\alpha\ne 0$ we do not have explicit formulas for solutions to \eqref{P0} (not even when $\alpha$ is integer, to the best of our knowledge) so that the existence of even one single solution is not obvious. We shall prove two existence results based on a Schauder fixed-point argument, in a spirit similar to that of previous works, such as \cite{H-odd,H-vol, HM, WY}. 

There are two crucial ingredients in this approach. One is to have a good ansatz, namely restrict the set among which we will look for solutions to functions having a particular asymptotic behaviour. The second ingredient is a precise information on the value of $\Lambda$ for normal solutions to \eqref{P0}, namely solutions to an integral equation, see Definition \ref{normal}. Both this properties are contained in the following theorem, which extends to the case $\alpha\ne 0$ the classification results in \cite{Lin,MarClass,WX,JMMX, H-clas}.


\begin{thm}\label{mainthm}
For $\alpha>-1$ let $u$ solve \eqref{P0} and define $v$ as follows
\begin{equation}\label{defv}
v(x)=\frac{1}{\gamma_n}\int_{\rn}\log \left (\frac{1+| y|}{|x-y|}\right )|y|^{n\alpha}e^{nu(y)}\, dy.
\end{equation}
Then there exists an upper-bounded polynomial $p$ of degree at most $n-1$ such that $u=v+p$. Moreover 
\begin{equation}\label{asympv}
\lim_{|x|\to\infty}\frac{v(x)}{\log|x|}=-\beta:=-\frac{\Lambda}{\gamma_n}.
\end{equation}
Finally, if $p$ is constant, then $\Lambda=\Lambda_1(1+\alpha)$.
\end{thm}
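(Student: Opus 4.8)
The plan is to establish the three assertions in sequence, the decomposition $u = v+p$, the asymptotics \eqref{asympv}, and the identity $\Lambda = \Lambda_1(1+\alpha)$ in the case $p$ constant, reducing each to known tools for the case $\alpha = 0$ after absorbing the weight $|x|^{n\alpha}$ into the measure $\mu := |x|^{n\alpha}e^{nu}\,dx$, which has finite total mass $\Lambda$ by \eqref{P0}.

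First I would check that $v$ in \eqref{defv} is well defined and that $(-\Delta)^{n/2}v = |x|^{n\alpha}e^{nu}$ in the sense of Definition \ref{def-soln}. Well-definedness follows because $\log\frac{1+|y|}{|x-y|}$ is integrable against the finite measure $\mu$ (the logarithmic singularity at $y=x$ is integrable, and the growth in $y$ is controlled by $\log(1+|y|)$, hence by a constant once we note $\mu$ is finite, splitting the integral over $\{|x-y|<1\}$ and its complement). That $v$ solves the equation is a Fubini/Fundamental-solution computation using $(-\Delta)^{n/2}\big(\frac{1}{\gamma_n}\log\frac{1}{|\cdot|}\big)=\delta_0$; some care is needed for $n$ odd, where one tests against $\mathcal S(\rn)$ and uses the decay $(-\Delta)^{n/2}\varphi(x)=O(|x|^{-2n})$ quoted in the excerpt. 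Then $h := u - v$ satisfies $(-\Delta)^{n/2}h = 0$ in $\rn$ in the distributional sense, and I would invoke a Liouville-type theorem for the polyharmonic / fractional-Laplacian operator on $\rn$: a tempered (indeed $L_{n/2}$ or $L^1_{\loc}$ with controlled growth) solution of $(-\Delta)^{n/2}h=0$ is a polynomial, and the growth bound $v(x)\ge -C\log(2+|x|)$ coming from the explicit kernel forces $\deg p \le n-1$ together with $p$ bounded above (since $u = v+p$ and $e^{nu}\in L^1$ forces $p$, hence $v+p$, to not blow up to $+\infty$ too fast; more precisely $\int |x|^{n\alpha}e^{np}\,dx$-type control pins down the degree and the upper bound). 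This is essentially the argument of \cite{H-clas, MarClass}; the new point is only that the weight is harmless.

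For \eqref{asympv} I would show $\frac{v(x)}{\log|x|}\to -\Lambda/\gamma_n$ by a dominated-convergence argument: write
\[
\frac{v(x)}{\log|x|} = \frac{1}{\gamma_n}\int_{\rn}\frac{\log(1+|y|) - \log|x-y|}{\log|x|}\,d\mu(y),
\]
and for each fixed $y$ the integrand tends to $-1/\gamma_n\cdot \gamma_n^{-1}$... more precisely $\frac{\log(1+|y|)-\log|x-y|}{\log|x|}\to -1$ as $|x|\to\infty$; the delicate region is $|x-y|$ small, which I would handle by splitting off $\int_{|x-y|<1}\log\frac{1}{|x-y|}\,d\mu(y)$ and showing it is $o(\log|x|)$ using the boundedness of $u$ near any point (Theorem \ref{thmreg}) away from the origin and the integrability of $|y|^{n\alpha}\log\frac1{|x-y|}$; near the origin $|y|^{n\alpha}$ is integrable since $\alpha>-1$. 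Then one needs a uniform dominating function, e.g. $\big|\log(1+|y|)-\log|x-y|\big|\le C\log|x| + \log\frac{1}{|x-y|}$ for $|x|$ large, to pass the limit inside.

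Finally, when $p$ is constant, $u = v + c_0$ with $v$ given by the kernel, so $u$ is a \emph{normal} solution in the sense of Definition \ref{normal}, and I would compute $\Lambda$ via a Pohozaev-type / spherical-average identity. The standard route for $\alpha = 0$ (cf. \cite{Lin,WX,JMMX}) integrates the equation $(-\Delta)^{n/2}u = |x|^{n\alpha}e^{nu}$ against suitable radial test functions, or examines $\frac{d}{dr}\fint_{\partial B_r}u$, to obtain $\int_{\partial B_r}\partial_\nu(-\Delta)^{(n-2)/2}u$ in terms of $\Lambda$, and then uses the asymptotics \eqref{asympv} to evaluate the boundary terms as $r\to\infty$, yielding $\Lambda = \Lambda_1(1+\alpha)$; the factor $(1+\alpha)$ enters through $\int_{B_r}|x|^{n\alpha}\,dx$ scaling, or equivalently because $v(x)\sim -\beta\log|x|$ with $\beta = \Lambda/\gamma_n$ must be compatible with $|x|^{n\alpha}e^{nv}\in L^1$, which near infinity needs $n\alpha - n\beta < -n$, and the borderline normal case saturates a Chen–Li–type integral identity forcing equality $\beta = 1+\alpha$, i.e. $\Lambda = \gamma_n(1+\alpha) = \Lambda_1(1+\alpha)$ since $\gamma_n = \frac{(n-1)!}{2}|S^n|$ and $\Lambda_1 = (n-1)!|S^n|$... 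I would reconcile this factor of $2$ by recalling the convention $e^{2w}$ versus $e^{nw}$ in the geometric normalization and track it carefully. The main obstacle I anticipate is precisely this last step: making the Pohozaev/spherical-average computation rigorous with the weight $|x|^{n\alpha}$ present, controlling the boundary integrals of the lower-order derivatives $(-\Delta)^{j}u$ at infinity using only \eqref{asympv} and the regularity theorem, and getting the constant exactly right.
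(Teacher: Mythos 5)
The decomposition $u=v+p$ and the final Pohozaev step are in line with the paper's argument, but your proof of \eqref{asympv} has a genuine gap, and it sits exactly where the paper's main new idea lives. The lower bound $\liminf v/\log|x|\ge -\beta$ is indeed elementary (triangle inequality in the kernel). For the upper bound, everything reduces to showing that the near-diagonal term $\int_{B_1(x)}\log\frac{1}{|x-y|}\,|y|^{n\alpha}e^{nu}\,dy$ is $o(\log|x|)$, and you propose to do this ``using the boundedness of $u$ near any point (Theorem \ref{thmreg})''. That theorem only gives \emph{local} regularity; it provides no bound on $\sup_{B_1(x)}u$ that is uniform as $|x|\to\infty$, and indeed $u=v+p$ with $p$ a possibly nonconstant polynomial, so $e^{nu}$ is not a priori controlled pointwise near a faraway point $x$ --- only its integral over $B_1(x)$ is small. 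A H\"older/Jensen argument can close this (this is the paper's Remark \ref{rmkeasy}), but \emph{only} when $\alpha<\beta$: one pays a factor $|x|^{n\alpha}$ against a gain $|x|^{-n(\beta-\ve)}$ from the $L^q$ estimate on $e^{v}$, and for $\alpha\ge\beta$ the weight wins and the term is not obviously bounded. The paper circumvents this by never bounding that term at all: it proves a Campanato-type oscillation estimate (Lemma \ref{campanato2}) showing $v$ varies by only $o(1)\log|x|$ on unit balls near infinity, so that if $v(x_k)\ge(-\beta+2\delta)\log|x_k|$ at some points then $v$ is large on entire unit balls, contradicting the integral estimate $\int_{B_1(x)}e^{v}\le C|x|^{-(\beta-\ve)}$ of Lemma \ref{lpv}. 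Your dominated-convergence framing also cannot supply a dominating function independent of $x$ for the singular part of the kernel. Without some substitute for the oscillation estimate, your argument proves \eqref{asympv} only for $\alpha<\beta$, not for all $\alpha>-1$.

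Two smaller points. First, the upper-boundedness of $p$ is not automatic from ``$e^{nu}\in L^1$ forces $p$ not to blow up'': since $v$ is only bounded \emph{below} by $-\beta\log|x|$, one must rule out $p\to+\infty$ along thin regions; the paper does this via Gorin's theorem on the growth of $\sup_{\partial B_r}p$ combined with the gradient bound $|\nabla p|\le C|x|^{n-2}$ to produce a ball of quantified radius on which $u$ is large, contradicting integrability. Second, your heuristic for the value of $\Lambda$ is off: integrability at infinity forces only $\beta>1+\alpha$, and the normal case gives $\beta=2(1+\alpha)$ (since $\Lambda_1=2\gamma_n$), not the ``borderline'' $\beta=1+\alpha$; the correct route is the quadratic Pohozaev identity $\frac{\Lambda}{\gamma_n}(\Lambda-2\gamma_n)=2\alpha\Lambda$, which you gesture at but do not derive.
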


Notice that $p$ being upper bounded implies that $p$ has even degree, hence $p$ has degree at most $n-2$ when $n$ is even.

\begin{defn}[Normal solutions]\label{normal} We call a solution $u$ to \eqref{P0} normal if writing $u=v+p$ as in Theorem \ref{mainthm} the polynomial $p$ is constant. Equivalently if there exists a constant $c\in \R$ such that $u$ solves the integral equation
$$u(x)=\frac{1}{\gamma_n}\int_{\rn}\log \left (\frac{1+| y|}{|x-y|}\right )|y|^{n\alpha}e^{nu(y)}\, dy +c.$$
\end{defn}

The proof of Theorem \ref{mainthm} is innovative in comparison to the previous works, which it also simplifies considerably (see Remark \ref{rmkeasy}). It is based on a Campanato-space estimate (Lemma \ref{campanato2}). In fact, instead of having an $L^\infty$-upper bound on the function $v$, which is usually difficult to obtain in the case $\alpha=0$ and appears to be much more challenging in the case $\alpha\ne 0$, we content ourselves with a decay estimate on the oscillation of $v$, which will be sufficient to conclude.

We can summarize Theorem \ref{mainthm} by saying that all solutions to \eqref{P0} have the form $v+p$, where $v$ behaves logarithmically at infinity and $p$ is a polynomial, and if $p$ is constant, then $\Lambda=\Lambda_1(1+\alpha)$. Using this information we can move towards the existence results.

In the first result we show existence of radial solutions up to the critical treshold $\Lambda_1(1+\alpha)$. 

\begin{thm}\label{thm4}
Let $n\geq 3$ and $\alpha>-1$. Then for every $0<\Lambda< \Lambda_1(1+\alpha)$ there exists a radially symmetric solution to \eqref{P0}. For $\Lambda=\Lambda_1(1+\alpha)$ there exists a radially symmetric and normal solution to \eqref{P0}.
\end{thm}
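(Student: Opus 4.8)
The plan is to construct radial solutions to \eqref{P0} by a Schauder fixed-point argument in a space of radial functions with a prescribed logarithmic behaviour at infinity, following the strategy announced before the theorem. Fix $0<\Lambda<\Lambda_1(1+\alpha)$ and set $\beta=\Lambda/\gamma_n$, so that by \eqref{asympv} a normal solution should behave like $-\beta\log|x|$ at infinity; the constraint $\Lambda<\Lambda_1(1+\alpha)$ translates into $n\beta<n(1+\alpha)+n$, i.e. the weight $|x|^{n\alpha}e^{-n\beta\log|x|}=|x|^{n\alpha-n\beta}$ is integrable near infinity, which is exactly what makes the finite-volume condition attainable. First I would introduce, for a radial function $u$ with $\int_{\rn}|x|^{n\alpha}e^{nu}\,dx=\Lambda$, the operator
$$T(u)(x):=\frac{1}{\gamma_n}\int_{\rn}\log\left(\frac{1+|y|}{|x-y|}\right)|y|^{n\alpha}e^{nu(y)}\,dy + c_u,$$
where the constant $c_u$ is chosen (by a one-dimensional continuity/monotonicity argument in $c$) precisely so that $\int_{\rn}|x|^{n\alpha}e^{nT(u)}\,dx=\Lambda$ again; this normalisation is what keeps the volume fixed and makes a fixed point an honest normal solution with total $Q$-curvature $\Lambda$.

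Next I would set up the function space. Using Theorem \ref{mainthm} and the asymptotics \eqref{asympv} as a guide, I would work in a closed convex subset $K$ of radial continuous functions $u$ with $u(x)+\beta\log(1+|x|)$ bounded, equipped with a weighted sup-norm, and intersect with the volume constraint. The two things to check for Schauder are: (i) $T$ maps $K$ into itself, and (ii) $T$ is compact and continuous on $K$. For (i) one estimates $T(u)(x)$ from above and below by splitting the potential integral over $|y|<|x|/2$, $|x|/2<|y|<2|x|$, and $|y|>2|x|$; the dominant term gives $T(u)(x)=-\beta\log|x|+O(1)$ uniformly, using integrability of $|y|^{n\alpha}e^{nu(y)}$ near $0$ (here $\alpha>-1$ is essential) and near $\infty$ (here the decay from the ansatz together with $\Lambda<\Lambda_1(1+\alpha)$ is essential). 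Compactness in (ii) follows from the smoothing of the logarithmic potential, giving local $C^{0,\gamma}$ bounds on $T(u)$ independent of $u\in K$, together with the uniform control of the tail from the weighted norm.

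Once a fixed point $u_\Lambda$ is obtained it is by construction a normal radial solution of the integral equation in Definition \ref{normal} with total curvature $\Lambda$, hence (by Theorem \ref{thmreg}) a smooth solution of \eqref{P0} away from the origin and $C^0$ near it. For the endpoint $\Lambda=\Lambda_1(1+\alpha)$ the above integrability at infinity is borderline (the weight becomes $|x|^{-n}$, logarithmically divergent), so a direct fixed point in the same space fails; instead I would take the solutions $u_{\Lambda}$ for $\Lambda\uparrow\Lambda_1(1+\alpha)$ and pass to the limit, using the uniform local estimates and the uniform logarithmic tail control to extract a locally convergent subsequence, then verify that the limit still has total curvature $\Lambda_1(1+\alpha)$ and is normal — no volume is lost to infinity precisely because the tail bound $u_\Lambda(x)\le -\beta\log|x|+C$ with $C$ uniform dominates the borderline weight outside a large ball by an integrable function, and the last assertion of Theorem \ref{mainthm} then forces $p$ constant. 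I expect the main obstacle to be exactly this endpoint analysis: proving that the limiting solution retains the full critical volume and does not degenerate, i.e. ruling out concentration or loss of mass at infinity as $\Lambda\to\Lambda_1(1+\alpha)$, which requires the uniform-in-$\Lambda$ tail estimate rather than just a fixed $\Lambda$ one.
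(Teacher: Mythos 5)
Your proposal has a fundamental flaw in the subcritical case that is incompatible with the paper's own classification result. You propose to look for a fixed point in a space of radial functions with $u(x)+\beta\log(1+|x|)$ bounded, and you claim that the fixed point would ``by construction be a normal radial solution $\dots$ with total curvature $\Lambda$'' for any $0<\Lambda<\Lambda_1(1+\alpha)$. But Theorem~\ref{mainthm} (the last assertion) says exactly that a normal solution must have $\Lambda=\Lambda_1(1+\alpha)$; equivalently, if $u=v+p$ with $v$ of pure logarithmic growth and $p$ bounded (hence constant), then $\Lambda=\Lambda_1(1+\alpha)$. So the set in which you want to run Schauder contains no solution at all when $\Lambda<\Lambda_1(1+\alpha)$, and the fixed point you are hoping for cannot exist. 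This is also reflected in Proposition~\ref{poho}: in dimensions $3$ and $4$, $\Lambda\le\Lambda_1(1+\alpha)$ with equality \emph{iff} the radial solution is normal. The paper's way around this is to take a deliberately \emph{non-normal} ansatz $u(x)=v(x)-|x|^2+c_v$, with $v$ in a space of radial functions of at most linear growth (norm $\sup|v|/(1+|x|)$) and volume normalisation built into $c_v$; the quadratic polynomial $p(x)=-|x|^2+c_v$ is what frees $\Lambda$ to take values strictly below the normal value. Your splitting-the-potential and Ascoli-type compactness ideas are fine as far as they go, but they are applied to the wrong ansatz. (As a side remark, your reformulation ``$\Lambda<\Lambda_1(1+\alpha)\iff n\beta<n(1+\alpha)+n$'' also has an algebraic slip: since $\Lambda_1=2\gamma_n$, the constraint reads $\beta<2(1+\alpha)$, not $\beta<2+\alpha$.)

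The second gap is in the endpoint analysis. Even granting a family of solutions $u_{\Lambda}$ for $\Lambda\uparrow\Lambda_1(1+\alpha)$, you propose to ``extract a locally convergent subsequence'' and then check that the limit keeps the full volume and is normal. This is not what happens: with the (correct) non-normal ansatz, the solutions necessarily concentrate as $\Lambda\uparrow\Lambda_1(1+\alpha)$, i.e.\ $v_k(0)+c_{v_k}\to\infty$ (this is \eqref{vkinfty} in the paper, proved by contradiction with the Pohozaev identity of Proposition~\ref{special-poho} in the case $\mu=n>0$). A plain local limit of $u_{\Lambda_k}$ therefore degenerates, and one must instead perform the blow-up rescaling $\eta_k(x)=v_k(r_kx)-v_k(0)$, $r_k^{1+\alpha}=e^{-v_k(0)-c_{v_k}}$, show the rescaled sequence converges (via the integral estimates of Lemma~\ref{diffint} and the elliptic estimate Proposition~\ref{stimaell}), and identify the limit $\eta$ as a normal solution through the Liouville-type Theorem~\ref{thm6}. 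The normal solution at $\Lambda=\Lambda_1(1+\alpha)$ is obtained as this rescaled blow-up limit, not as a direct limit of the $u_{\Lambda_k}$. Your intuition that ``the main obstacle is $\dots$ ruling out concentration or loss of mass at infinity'' misidentifies the danger: the mass does not escape to infinity (the weight $e^{-n|x|^2}$ prevents that); the mass concentrates at the origin, and the blow-up rescaling is precisely how the paper captures the limit profile rather than ruling concentration out.
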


Existence for every $\Lambda<\Lambda_1(1+\alpha)$ is obtained by a standard compactness argument. These solutions will have the form $v+p$ with $p(x)=-|x|^2$. The normal solutions obtained for the case $\Lambda=\Lambda_1(1+\alpha)$ are the higher-dimensional equivalent of \eqref{spherical2}, whose existence is not obvious, since we do not have explicit formulas for them. For this part we will use a blow-up argument together with a non-existence result based on a Pohozaev-type identity.


Next we show that breaking radial symmetry, we are able to produce solutions to \eqref{P0} above the critical threshold.

\begin{thm}\label{thm2}
Let $n\geq 3$. Then for every $\alpha>-1$ and for every $\Lambda\in (0,\Lambda_1)$ there exists a solution  to \eqref{P0}.
\end{thm}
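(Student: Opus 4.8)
\textbf{Proof proposal for Theorem \ref{thm2}.}

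The plan is to produce solutions by a Schauder fixed-point argument in which we break radial symmetry by adding a suitable nonconstant polynomial $p$ to the ansatz, along the lines of \cite{H-odd,H-vol,HM,WY} but now in the singular setting $\alpha\ne 0$. Given $\Lambda\in(0,\Lambda_1)$, write $\beta=\Lambda/\gamma_n$; by Theorem \ref{mainthm} we know a priori that $\beta<\Lambda_1/\gamma_n=n$, so that the target logarithmic rate $-\beta\log|x|$ is subcritical in the sense that $\beta<n$. Since $\Lambda<\Lambda_1\le\Lambda_1(1+\alpha)$ when $\alpha\ge 0$, and the threshold for radial solutions is $\Lambda_1(1+\alpha)$, the genuinely new cases are $-1<\alpha<0$ (where $\Lambda_1(1+\alpha)<\Lambda_1$) and, more importantly, the fact that we want a single statement covering all $\alpha>-1$; the mechanism that allows us to reach any $\Lambda<\Lambda_1$ regardless of $\alpha$ is the extra decay furnished by a nonradial polynomial correction. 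Concretely I would look for $u=v+p$ where
\begin{equation*}
v(x)=\frac{1}{\gamma_n}\int_{\rn}\log\left(\frac{1+|y|}{|x-y|}\right)|y|^{n\alpha}e^{n(v(y)+p(y))}\,dy,
\end{equation*}
and $p$ is a fixed polynomial of degree $\le n-1$, upper bounded, chosen so that $p(x)\to-\infty$ fast enough in enough directions to make $|x|^{n\alpha}e^{np}$ integrable against the relevant kernels and to force the volume $\int|x|^{n\alpha}e^{n(v+p)}=\Lambda$ for a prescribed $\Lambda<\Lambda_1$.

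The key steps, in order, are: (i) Fix the polynomial $p$ (e.g.\ a negative-definite quadratic in $n-1$ of the variables, or a higher even-degree polynomial when $n$ is even to respect the degree constraint from Theorem \ref{mainthm}), together with a normalization parameter, so that the map
\begin{equation*}
T(v)(x):=\frac{1}{\gamma_n}\int_{\rn}\log\left(\frac{1+|y|}{|x-y|}\right)|y|^{n\alpha}e^{n(v(y)+p(y))}\,dy-\text{(constant ensuring volume }=\Lambda)
\end{equation*}
is well defined on a suitable closed convex set $X$ of continuous functions with controlled growth (e.g.\ $|v(x)|\le C\log(2+|x|)$, together with a Hölder/Campanato-type modulus of continuity as in Lemma \ref{campanato2}). (ii) Show $T(X)\subseteq X$: this requires the weighted integrability $|y|^{n\alpha}e^{n(v(y)+p(y))}\in L^1$, which holds because $\alpha>-1$ handles the origin and $p(y)\to-\infty$ handles infinity, and requires the logarithmic growth and continuity estimates for the Riesz-type potential on the right-hand side, using the representation formula and the Campanato estimate of Lemma \ref{campanato2} exactly as in the proof of Theorem \ref{mainthm}. (iii) Show $T$ is continuous and compact on $X$ (Arzelà–Ascoli on compact sets plus the uniform decay/growth control to upgrade to the full space), and apply the Schauder fixed-point theorem to get $v=T(v)$. (iv) Check that the resulting $u=v+p$ actually solves \eqref{P0} with the prescribed $\Lambda$: the normalization constant was chosen precisely so that $\int|x|^{n\alpha}e^{nu}=\Lambda$, and one must verify $\Lambda$ can indeed be prescribed anywhere in $(0,\Lambda_1)$ — this is where the freedom in scaling $v\mapsto v+\text{const}$ (equivalently a scaling of the profile) and in the choice of $p$ is used, together with a continuity/intermediate-value argument in the parameters. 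Finally, by construction $p$ is nonconstant, so $u$ is nonradial and by Theorem \ref{mainthm} it is genuinely a solution with total $Q$-curvature $\Lambda\ne\Lambda_1(1+\alpha)$ in general.

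The main obstacle I expect is step (ii)–(iii): closing the fixed-point argument requires a priori control of $v$ that is uniform over $X$ and stable under the nonlinear exponential feedback $e^{n(v+p)}$. In the radial case one can use ODE monotonicity, but here we must instead rely on the Campanato/oscillation estimate (Lemma \ref{campanato2}) to bound the oscillation of the potential on dyadic annuli and thereby control both the growth $v(x)\sim -\beta\log|x|$ and the modulus of continuity; the delicate point is that the constant $C$ in $|v|\le C\log(2+|x|)$ must be chosen compatibly with the volume constraint $\Lambda<\Lambda_1$ so that the self-map property $T(X)\subseteq X$ truly holds — if $\Lambda$ is too close to $\Lambda_1$ one risks losing integrability of $e^{nv}$ at infinity (since $nv\sim -n\beta\log|x|$ and $n\beta\to n\cdot n$ only at the forbidden value), so the argument must quantify how the polynomial $p$'s decay buys back the needed margin. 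A secondary technical point is the degree constraint: $p$ must be upper bounded and of degree $\le n-1$ (hence $\le n-2$ for $n$ even), which restricts the admissible nonradial profiles and must be reconciled with the integrability of $|x|^{n\alpha}e^{np}$; for small $n$ (e.g.\ $n=3$) this leaves little room and will need care.
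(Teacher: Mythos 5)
Your proposal reproduces the broad skeleton the paper uses (Schauder fixed point, the log-kernel integral operator, a normalization constant $c_v$ enforcing the prescribed volume, and an exponential weight $e^{-n|y|^2}$ encoding the $-|x|^2$ part of the polynomial), and you correctly identify that breaking radial symmetry is the way to go beyond the radial threshold $\Lambda_1(1+\alpha)$. However, the proposal is missing the one idea that actually makes this work, and as written the a priori estimate needed for Schauder would fail in precisely the new, supercritical range.

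The gap is in step (iv), the a priori bound when $\alpha<0$ and $\Lambda\in[\Lambda_1(1+\alpha),\Lambda_1)$. You propose to use a \emph{fixed} nonconstant polynomial $p$ with $p\to-\infty$. But the a priori bound is obtained by a blow-up argument: if a family of approximate fixed points is unbounded, one rescales and passes to a normal solution of the limit equation, then invokes the Pohozaev identity to identify the bubble's volume and derive a contradiction with the constraint $\Lambda<\cdot$. With a fixed polynomial, blow-up at the origin produces a normal solution of $(-\De)^{\frac n2}\eta=|x|^{n\alpha}e^{n\eta}$ whose volume is exactly $\Lambda_1(1+\alpha)$, so the only contradiction available is $\Lambda<\Lambda_1(1+\alpha)$. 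That is exactly Theorem~\ref{thm4}; it does not reach $\Lambda_1$. What the paper does differently is to make the first-degree coefficient of the polynomial \emph{depend on $v$}: the operator adds $v^*x_1$ with $v^*=\sup_{|x|\le 1}e^{\frac{2}{1+\alpha}(v+c_v)}$, a quantity calibrated to the blow-up scale $r^{1+\alpha}=e^{-w(x_k)}$ near the singular weight. Then in the blow-up-at-the-origin case, the rescaled gradient bound $\int_{B_R}|\nabla\eta_k|\,dx\le CR^{n-1}$ forces $r_kv_k^*\to 0$, while the definition of $v^*$ gives $r_kv_k^*\gtrsim e^{\frac{1}{1+\alpha}w_k(x_k)}\to\infty$ — a contradiction that holds for \emph{every} $\Lambda>0$, not just $\Lambda<\Lambda_1(1+\alpha)$. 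Blow-up away from the origin is then a standard constant-coefficient spherical bubble of volume $\Lambda_1$, ruled out by $\Lambda<\Lambda_1$. Without this self-referential coefficient (or an alternative device that also removes the origin-bubble without constraining $\Lambda$ relative to $\Lambda_1(1+\alpha)$), the argument closes only up to $\Lambda_1(1+\alpha)$, and the theorem is not reached.

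Two smaller points: because the fixed-point ansatz contains a genuinely linear term $v^*x_1$, the ambient Banach space must allow linear growth, so the paper uses $\|v\|=\sup_x\frac{|v(x)|}{1+|x|}$ rather than a $\log(2+|x|)$ envelope; and your worry about the degree constraint for even $n$ is moot here, since the final polynomial $p(x)=v^*x_1-|x|^2+c_v$ has degree $2\le n-2$ for all $n\ge4$ (and degree $2= n-1$ when $n=3$), is upper bounded, and is compatible with Theorem~\ref{mainthm}.
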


Notice that $\Lambda_1> \Lambda_1(1+\alpha)$ for $\alpha\in (-1,0)$ in Theorem \ref{thm2}, hence we have an existence result above the critical threshold of compactness. 

As we shall see, the solutions given by Theorem \ref{thm2} are non-radial, by construction.
On the other hand, in dimension $3$ and $4$ this is also a necessity. Indeed from the Pohozaev identity we obtain:

\begin{prop}\label{poho}
For $n=3$ and $n=4$ every \emph{radial} solution $u$ to \eqref{P0} satisfies $\Lambda\le \Lambda_1(1+\alpha)$ with identity if and only if  $u$ is normal.
\end{prop}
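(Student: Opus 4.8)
The plan is to derive a Pohozaev-type identity for radial solutions to \eqref{P0} and extract from it the sign of $\Lambda_1(1+\alpha)-\Lambda$. By Theorem \ref{mainthm} we may write $u=v+p$, where $v$ behaves logarithmically at infinity with $v(x)/\log|x|\to -\beta=-\Lambda/\gamma_n$, and $p$ is an upper-bounded polynomial of degree at most $n-1$; for $n=3$ this forces $p$ to have degree $0$ or $2$ (even degree), and for $n=4$ degree $0$ or $2$. If $p$ is constant, $u$ is normal and Theorem \ref{mainthm} already gives $\Lambda=\Lambda_1(1+\alpha)$; so the content of the proposition is to show that when $p$ is a genuine quadratic (hence $p(x)=-a|x|^2$ with $a>0$ by radiality and upper-boundedness — note radial symmetry of $u$ forces $p$ radial) one has the strict inequality $\Lambda<\Lambda_1(1+\alpha)$.

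First I would write the equation in radial form. In dimension $n=3$ one has $(-\Delta)^{3/2}$, an odd-order operator, so I would work with the integral/ODE formulation: setting $f=|x|^{n\alpha}e^{nu}$, the function $v$ solves a Riesz-potential equation and, away from the origin, $u$ is smooth, so along rays $u$ satisfies a genuine ODE. In dimension $n=4$ the operator is $\Delta^2$, and radial solutions satisfy the ODE $\Delta^2 u = |x|^{4\alpha}e^{4u}$ which can be integrated explicitly twice. The Pohozaev identity is obtained by testing the equation against $x\cdot\nabla u + $ (a constant chosen to absorb the scaling weight $|x|^{n\alpha}$), i.e. against the infinitesimal generator of the scaling $u_\lambda(x)=u(\lambda x)+\frac{n(1+\alpha)}{n}\log\lambda$ that leaves \eqref{P0} invariant, and integrating over $B_R$. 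The left-hand side produces boundary terms at $|x|=R$ built from $u$, $\nabla u$, and (for $n=4$) $\Delta u$, $\nabla\Delta u$; the right-hand side produces $\frac{1}{?}\int_{B_R}|x|^{n\alpha}e^{nu}\,dx$ plus a boundary term, i.e. something converging to a multiple of $\Lambda$. Letting $R\to\infty$ and using the asymptotics $u(x)=-a|x|^2 + O(\log|x|)$ — more precisely $u = -a|x|^2 - \beta\log|x| + o(\log|x|)$ with the lower-order radial derivatives controlled as in \cite{H-odd,HM} — the boundary terms all either vanish or contribute a computable multiple of $a$ and $\beta$; the key point is that the quadratic leading term contributes with a definite sign. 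Collecting terms yields an identity of the schematic form $c_n\Lambda(\Lambda-\Lambda_1(1+\alpha)) = -(\text{positive constant})\cdot(\text{nonnegative quantity involving }a)$, so that $\Lambda\le \Lambda_1(1+\alpha)$, with equality exactly when the quadratic term is absent, i.e. when $p$ is constant, i.e. when $u$ is normal.

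Concretely for $n=4$ I would proceed as follows: integrate $\Delta^2 u = |x|^{4\alpha}e^{4u}$ to get $\Delta u(x) = \Delta u(0) - \frac{1}{|S^3|}\int \dots$, identify $\Delta u \to -4a$ at infinity and relate $a$ to $\Lambda$ via the decay $u\sim -\beta\log|x|$ of the non-polynomial part; then multiply the equation by $(x\cdot\nabla u + 1+\alpha)$ and integrate by parts twice over $B_R$. Each integration by parts is a one-line computation in the radial variable; the boundary terms at radius $R$ involve $R^3 u'(R)^2$, $R\,\Delta u(R)^2 R^3$, cross terms, and on the right $R^{4+4\alpha}e^{4u(R)}$ which decays because $e^{4u}\in L^1$ forces $|x|^{4\alpha}e^{4u(x)}=o(|x|^{-4})$ along the sequence where it is integrated, but one must be careful and instead use that $\int_{B_R}$ of it converges to $\Lambda$. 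After the dust settles, the identity reads (up to fixing constants) $\Lambda^2 - \Lambda_1(1+\alpha)\Lambda = -16a\,\Lambda\cdot(\text{const}) \le 0$ since $a\ge 0$. For $n=3$ the structure is identical but one uses the representation $u = v+p$ directly and the fractional Pohozaev identity of \cite{H-odd}, testing against the scaling generator; the half-Laplacian boundary terms are handled by the by-now standard extension/commutator estimates, and again the sign of the $|x|^2$-coefficient gives $\Lambda\le\Lambda_1(1+\alpha)$.

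The main obstacle I anticipate is controlling the boundary terms in the Pohozaev identity with sufficient precision — in particular, one needs not just $u = -a|x|^2 + O(\log|x|)$ but matching estimates on $\nabla u$, $\Delta u$ (and in the odd case, the nonlocal quantities), uniformly up to the boundary sphere $|x|=R$ as $R\to\infty$, so that the cross terms and the quadratic-times-logarithm terms vanish in the limit rather than blowing up or contributing an uncontrolled sign. This requires sharpening the asymptotic analysis of Theorem \ref{mainthm}: differentiating the representation formula \eqref{defv} under the integral sign and estimating $\nabla v$, $\nabla^2 v$ using that $|y|^{n\alpha}e^{nu(y)}\in L^1$ together with the log-growth of $v$. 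The second delicate point is justifying $a\ge 0$ and relating it quantitatively to $\Lambda$: $a\ge 0$ follows from upper-boundedness of $p$ plus radiality (a nonconstant even polynomial that is bounded above and radial must be $-a|x|^2+\dots$ with leading coefficient negative, hence in degree $2$ it is exactly $-a|x|^2$ with $a>0$), and once $a>0$ strictly the inequality is strict — so the equality case is precisely $a=0$, i.e. $p$ constant, i.e. $u$ normal, which is exactly what Theorem \ref{mainthm} identifies. Everything else is bookkeeping of constants, which I would pin down by testing the identity on the known normal solution (the higher-dimensional analogue of \eqref{spherical2}) where $\Lambda=\Lambda_1(1+\alpha)$ and $a=0$.
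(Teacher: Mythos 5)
Your reduction is the right one: by Theorem \ref{mainthm} and radiality, $u=v+p$ with $p$ radial, upper bounded and of even degree at most $2$, hence $p(x)=-a|x|^2+c$ with $a\ge 0$, and the proposition comes down to a Pohozaev identity whose defect term carries the sign of $a$; the equality case is then exactly $a=0$, i.e.\ $u$ normal. Where you diverge from the paper is in \emph{how} the Pohozaev identity is obtained. You propose to test the differential equation against the scaling generator and integrate by parts over $B_R$, which forces you to control $\nabla u$, $\Delta u$, $\nabla\Delta u$ (and, for $n=3$, nonlocal boundary quantities via an extension) uniformly up to $|x|=R$ --- precisely the ``main obstacle'' you flag, and for $(-\Delta)^{3/2}$ the extension/commutator route is genuinely delicate rather than routine, since $s=3/2>1$. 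The paper sidesteps all of this: setting $\eta:=u+a|x|^2=v+c$, the function $\eta$ solves the integral equation \eqref{int eq} with $K=|\cdot|^{n\alpha}e^{-\mu|\cdot|^2}$, $\mu=na\ge 0$, and the symmetrization argument on the Riesz potential (Proposition \ref{poho-3}, in the spirit of Xu) yields
$$\frac{\Lambda}{\gamma_n}\left(\Lambda-2\gamma_n\right)=2\alpha\Lambda-\frac{4\mu}{n}\int_{\R^n}|x|^{n\alpha+2}e^{-\mu|x|^2}e^{n\eta}\,dx\le 2\alpha\Lambda,$$
with equality iff $\mu=0$; since $\Lambda_1=2\gamma_n$ this is $\Lambda\le\Lambda_1(1+\alpha)$ with equality iff $p$ is constant (Proposition \ref{special-poho}). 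The only analytic input is the pointwise decay $Ke^{n\eta}\le|x|^{-n-\eps}$, which for $a>0$ is free from the Gaussian and for $a=0$ follows from \eqref{asympv} plus $\beta>1+\alpha$; no boundary terms involving derivatives of $u$ ever appear, and $n=3$ and $n=4$ are treated identically. So the identity and the sign argument are the same, but your derivation is the hard differential one, while the paper's is the soft integral one already available in its appendix; I would recommend replacing your $B_R$-integration-by-parts (and in particular the extension argument in dimension $3$) by a direct appeal to Proposition \ref{special-poho} with $\mu=na$.
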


\subsection{Some open problems}

Using a variational argument as in  \cite{H-odd} one should be able to find solutions to \eqref{P0} of the form $u=v+p$ with the polynomial $p$ prescribed. For instance one could try to prove that for $n\geq 3$, $\alpha>-1$, $0<\Lambda<\Lambda_1\min\{ 1,1+\alpha\}$ and a given polynomial $p$ with $deg(p)\leq n-1$ and satisfying \begin{align}\label{cond-p}
p(x)\xrightarrow{|x|\to\infty}-\infty,
\end{align}
there exists a solution $u$ to \eqref{P0} such that $$u(x)=p(x)-\frac{2\Lambda}{\Lambda_1}\log|x|+C+o(1), \quad o(1)\xrightarrow{|x|\to\infty}0.$$

Also the existence of solutions to \eqref{P0} for arbitrarily large $\Lambda$ is open, in analogy with the case $\alpha=0$ studied in \cite{HD,H-vol,LM-vol}. In the radial case, using methods from \cite{H-vol} it should be possible to prove that for $n\geq 5$, $\alpha>-1$ and for every $\Lambda>0$ there exists a radially symmetric solution to \eqref{P0}. 
Notice that the condition $n\ge 5$ is necessary in view of Proposition \ref{poho} and the known results in dimension $1$ and $2$. If instead we drop the radial symmetry, it is open whether already in dimension $3$ and $4$ we can have solutions to \eqref{P0} with arbitrarily large $\Lambda$, for $\alpha\neq 0$.


\paragraph{Notation} In what follows $B_R(x)$ will denote the ball of radius $R$ centered at $x$ (the dependence on $x$ will often be omitted if $x=0$) and $C$ will denote a generic constant that can change from line to line.

\paragraph{Acknowledgements} During his PhD at the University of Basel under the guidance of the third author, Stefano Iula collaborated with the first and second authors to the proof of Theorem \ref{mainthm}.

\section{Regularity of solutions}
If $\Omega\subseteq \R^n$ is an open set, we denote
$$C^s(\Omega):=\left\{u\in C^{\lfloor s\rfloor}(\Omega):D^{\lfloor s\rfloor}u\in C^{0,s-\lfloor s\rfloor}(\Omega)\right\},
 \quad s-1<\lfloor s\rfloor\leq s,\, \lfloor s\rfloor\in\mathbb{N},$$
Then, we set
$$C^s_{\loc}(\R^n):=\{u\in C^0(\R^n): u|_\Omega\in C^s(\Omega)\text{ for every }\Omega\Subset\R^n\}.$$

\begin{thm}\label{thmreg}
Let $u$ be a solution of \eqref{P0} with $\alpha>-1$. If $\alpha\in (-1,0)$ {or $n\alpha-1\in 2\mathbb N$}, then $u\in C^\infty(\R^{n}\setminus\{0\})\cap C^{s}_{\loc}(\R^n)$, for $s<n(1+\alpha)$. If $\alpha>0$ with $n\alpha\not\in {\mathbb{N}}$ we have $u\in C^\infty(\R^{n}\setminus\{0\})\cap C^{n(1+\alpha)}_{\loc}(\R^n)$, and if $n\alpha\in 2\mathbb{N}$ we have $u\in C^\infty(\R^n)$.
\end{thm}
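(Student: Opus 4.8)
The plan is to bootstrap regularity starting from the representation of $u$ given by Theorem \ref{mainthm}, namely $u=v+p$ with $p$ a polynomial and $v$ the logarithmic potential in \eqref{defv}. Away from the origin the analysis is local and standard: on any annulus $B_{2r}(x_0)\setminus B_{r/2}(x_0)$ with $x_0\ne 0$ the weight $|x|^{n\alpha}$ is smooth and bounded, and once we know $u\in L^\infty_{\loc}(\R^n\setminus\{0\})$ (which follows from the continuity of $v$, itself a consequence of $|y|^{n\alpha}e^{nu}\in L^1$ and elliptic/potential estimates) the right-hand side $|x|^{n\alpha}e^{nu}$ is in $C^0_{\loc}$, hence $v$ gains $m=n$ derivatives by Calderón--Zygmund/Schauder theory for $(-\Delta)^{n/2}$, and a routine iteration gives $u\in C^\infty(\R^n\setminus\{0\})$. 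So the real content is the regularity \emph{near the origin}, which is governed entirely by the smoothing properties of the convolution kernel $\frac{1}{\gamma_n}\log\frac{1}{|x-y|}$ against the density $f(y):=|y|^{n\alpha}e^{nu(y)}$, since $p$ is a polynomial and contributes nothing to the local regularity obstruction.

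For the behaviour at $0$ I would first establish that $u$ is bounded near the origin (stated already before Definition \ref{def-soln} and following from Theorem \ref{mainthm} together with a Brezis--Merle / John--Nirenberg type argument, or directly from the Campanato estimate Lemma \ref{campanato2}), so that $e^{nu}$ is bounded near $0$ and the density behaves like $f(y)\sim |y|^{n\alpha}$ up to a bounded factor. Then the local regularity of $u$ near $0$ is that of the Riesz-type potential
$$
w(x):=\frac{1}{\gamma_n}\int_{B_1}\log\frac{1}{|x-y|}\,g(y)|y|^{n\alpha}\,dy,\qquad g\in L^\infty,
$$
which solves $(-\Delta)^{n/2}w=g|y|^{n\alpha}$ near $0$. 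The key computation is to read off how many Hölder derivatives this potential has depending on $\alpha$: differentiating under the integral, a derivative of order $k<n$ produces a kernel $\sim |x-y|^{k-n}$ times $\log$-corrections, convolved with $|y|^{n\alpha}\in L^\infty+L^p_{\loc}$, and the resulting modulus of continuity is Hölder of any order $s<n(1+\alpha)$ in general, improving to exactly $C^{n(1+\alpha)}$ when $n\alpha\notin\N$ (so that no logarithmic resonance occurs in the Taylor expansion of $w$ at $0$), and producing an actual $C^\infty$ function precisely when $n\alpha\in 2\N$, in which case $|y|^{n\alpha}$ is a polynomial (an even power of $|y|$) and the potential of a smooth compactly-modified density against a smooth weight is smooth. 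The borderline case $n\alpha-1\in2\N$ (so $n\alpha$ odd) has to be grouped with $\alpha\in(-1,0)$ because $|y|^{n\alpha}$, while not smooth, is still regular enough — its derivatives blow up no worse than the threshold — that one only loses the endpoint and gets $C^s_{\loc}$ for all $s<n(1+\alpha)$ but (possibly) not the endpoint.

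The main obstacle will be the sharp endpoint analysis distinguishing $C^{n(1+\alpha)}$ from merely $C^{s}$ for $s<n(1+\alpha)$: one must show that when $n\alpha\notin\N$ the non-integer power $|y|^{n\alpha}$ allows the potential to hit exactly the Hölder exponent $n(1+\alpha)$ without a logarithmic loss, whereas at integer values of $n\alpha$ the Taylor expansion of the Newtonian-type potential at the origin picks up a $|x|^{n+n\alpha}\log|x|$ term that destroys the endpoint (this is why for $n\alpha\in 2\N$ one needs the extra input that $|y|^{n\alpha}$ is literally a polynomial to recover $C^\infty$, rather than just failing to lose the endpoint). Concretely this requires expanding the kernel $\log|x-y|$ (or rather its derivatives) in spherical harmonics / Taylor series about $x=0$ and tracking the $|y|$-integrability of each term against $|y|^{n\alpha}$; the terms that would-be resonant are exactly those of homogeneity matching $-n-n\alpha$, which are integrable iff $n\alpha\notin\N$. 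Everything else — the bootstrap on $\R^n\setminus\{0\}$, the reduction to the potential $w$, and the absorption of the polynomial $p$ — is routine once this kernel bookkeeping is done, and I would organize the write-up as: (i) $L^\infty$ and $C^0$ near $0$; (ii) interior smoothness away from $0$ by bootstrap; (iii) potential estimates near $0$, splitting into the three cases according to the arithmetic nature of $n\alpha$.
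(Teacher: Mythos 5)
There is a genuine gap in your treatment of the regularity near the origin for $\alpha>0$, which is the heart of the theorem. You reduce to the potential $w(x)=\frac{1}{\gamma_n}\int_{B_1}\log\frac{1}{|x-y|}\,g(y)|y|^{n\alpha}dy$ with $g\in L^\infty$ and claim that kernel bookkeeping yields $C^{n(1+\alpha)}$ (resp.\ $C^s$, $s<n(1+\alpha)$). This cannot work: with $g$ merely bounded, the datum $g|y|^{n\alpha}$ is just an $L^\infty$ (or $L^p$, $p<-1/\alpha$, if $\alpha<0$) function on any punctured neighbourhood of $0$, so the log-kernel gains at most $n$ derivatives minus $\eps$; you get $W^{n,p}_{\loc}$ and hence $C^s_{\loc}$ for $s<n(1+\min\{\alpha,0\})$, and no expansion of the kernel in spherical harmonics can push a one-shot potential estimate past $C^n$ when $\alpha>0$. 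The extra regularity in the theorem does not come from resonances of the kernel at the origin but from a Schauder \emph{bootstrap}: once $u\in C^s_{\loc}$ for some $s>0$, the right-hand side $|x|^{n\alpha}e^{nu}$ is as H\"older-regular as the worse of $e^{nu}$ and the weight $|\cdot|^{n\alpha}$, and iterating Schauder estimates the regularity of $u$ climbs until it is capped by that of the weight, which is $C^{n\alpha}$ when $n\alpha\notin\N$, only $C^s$ for $s<n\alpha$ when $n\alpha$ is an odd integer, and $C^\infty$ when $n\alpha\in2\N$. That is exactly where the three cases of the statement come from; your proposed mechanism (integrability of homogeneous terms of the kernel expansion against $|y|^{n\alpha}$) is not the one that produces them and would not deliver the endpoint $C^{n(1+\alpha)}$ from $g\in L^\infty$ alone.

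A second, more structural issue: your starting point (boundedness of $u$ near $0$) is not available from Theorem \ref{mainthm} or Lemma \ref{campanato2}, both of which concern the regime $|x|\to\infty$ and say nothing about the origin; moreover the paper proves the regularity theorem \emph{before} the classification, so leaning on Theorem \ref{mainthm} inverts the logical order. The correct first step — which you mention only as an alternative — is the Brezis--Merle-type argument: split $|x|^{n\alpha}e^{nu}=f_1+f_2$ with $f_1\in L^1\cap L^\infty$ and $\|f_2\|_{L^1}$ small, decompose $u$ into the corresponding log-potentials plus an $\frac n2$-harmonic (hence smooth) remainder, and use Jensen's inequality to conclude $e^{nu}\in L^q_{\loc}$ for every $q<\infty$. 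Only then does Calder\'on--Zygmund give $u\in W^{n,p}_{\loc}$ ($p<-1/\alpha$ if $\alpha<0$, all $p$ if $\alpha>0$), Sobolev gives the initial H\"older regularity, and the bootstrap described above finishes the proof. Your outline for the smoothness away from the origin is fine and matches the paper.
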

\begin{proof}
 First, we claim that $e^{nu}\in L^q_{\loc}(\R^n)$ for any $q\ge 1$.  Indeed, given any $q$, we can take $\ve=\ve(q)$ such that $q <\frac{\gamma_n}{\ve}$ and we can split $|x|^{n\alpha}e^{nu}=f_1+f_2$, where $f_1,f_2\ge 0$ and
$$f_1\in L^1(\rn)\cap L^\infty(\rn),\quad \|f_2\|_{L^1(\R^n)}\le \ve.$$
Let us define the functions 
$$u_i(x):=\frac{1}{\gamma_n}\int_{\rn}\log\left(\frac{1+|y|}{|x-y|}\right)f_i(y)dy,\quad x\in\rn,\,i=1,2,$$
and  
$$
u_3:=u-u_1-u_2.
$$
 It is easy to see that $u_1\in C^{n-1}(\R^n)$ and that $u_3$ is $\frac{n}{2}$-harmonic and hence $u_3\in C^\infty(\R^n)$.  Moreover, using Jensen's inequality we get
  \begin{align}
   \int_{B_R}e^{nq u_2}dx&=\int_{B_R}\exp\left(\int_{\rn}\frac{nq\|f_2\|}{\gamma_n}\log\left(\frac{1+|y|}{|x-y|}\right)\frac{f_2(y)}{\|f_2\|}dy\right)dx\notag\\
   &\leq \int_{B_R}\int_{\rn}\exp\left(\frac{nq\|f_2\|}{\gamma_n}\log\left(\frac{1+|y|}{|x-y|}\right)\right)\frac{{f_2(y)}}{\|f_2\|}dydx\notag \\
   &=\frac{1}{\|f_2\|}\int_{\rn}{f_2(y)}\int_{B_R}\left(\frac{1+|y|}{|x-y|}\right)^\frac{nq\|f_2\|}{\gamma_n}dxdy\notag\\
   &\leq C(q,R),\label{br}
  \end{align}
  where  $\|\cdot\|$ denotes the  $L^1(\rn)$ norm. Hence the claim is proved.  
          
Set $\bar p:=-\tfrac{1}{\alpha}$ for $\alpha\in(-1,0)$ and $\bar p:=\infty$ if $\alpha>0$. Then $|\cdot|^{n\alpha}\in L^p_{\loc}(\rn)$ for every $1\le p <\bar p$. 
It follows that $(-\Delta)^\frac n2 u = |\cdot|^{n\alpha}e^{nu}\in L^p_{\loc}(\rn)$ for $p<\bar p$, and this implies $u\in W^{n,p}_{\loc}(\R^n)$ for $p<\bar p$. Indeed,  for any given $R>0$, we can write $u= v_1+v_2+v_3$, where $v_3$ is $\frac{n}{2}-$harmonic (and thus $v_3\in C^\infty(\R^n)$) and 
\begin{equation}\label{defvi}
v_i(x):=\frac{1}{\gamma_n}\int_{\rn}\log\left(\frac{1+|y|}{|x-y|}\right)g_i(y)dy,\quad x\in\rn,\,i=1,2,
\end{equation}
with
$$g_1= |\cdot|^{n\alpha}e^{nu} \chi_{B_{R}},\quad g_2= |\cdot|^{n\alpha}e^{nu} \chi_{B_{R}^c}.$$
Differentiating \eqref{defvi} we obtain that  $v_1\in W^{n,p}(B_\frac{R}{2})$ (by the Calderon-Zygmund theory) and $v_2\in C^\infty(B_\frac{R}{2})$.

By the Sobolev embedding we then infer that $u\in C^s_{\loc}(\R^n)$ for $s<n(1+ \min\{\alpha,0\})$.

Since ${|\cdot|}^{n\alpha}\in C^\infty(\R^n\setminus\{0\})$, by bootstrapping regularity we see that $u\in C^\infty(\R^n\setminus \{0\})$ for every $\alpha>-1$.
{Now, if $\alpha\ge 0$, we observe that  
$$
|\cdot|^{n\alpha}\in \begin{cases}
C^{n\alpha}(\R^n) & \text{ if }n\alpha \notin \mathbb N,\\
C^s_{\loc}(\R^n) & \text{ if } s<n\alpha,\; n\alpha -1\in 2\mathbb N,\\
C^\infty(\R^n) & \text{ if } n\alpha \in 2\mathbb N.
\end{cases}
$$ 
}
In any case, we can conclude the proof by bootstrapping regularity using Schauder estimates.
\end{proof}

\section{Proof of Theorem \ref{mainthm}}

\begin{lem}\label{below}
Let $u$ be a solution to Problem \eqref{P0} and $v$ as in \eqref{defv}.
Then for $|x|\geq 1$ we have
\begin{equation}\label{vbelow}
v(x)\geq  -\beta \log | x|,
\end{equation}
where $\beta$ is as in \eqref{asympv}. 
\end{lem}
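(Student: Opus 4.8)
The plan is to rewrite the claimed inequality as the nonnegativity of an integral and then verify that the integrand is pointwise nonnegative. Recalling that $\beta=\frac{\Lambda}{\gamma_n}$ and $\Lambda=\int_{\rn}|y|^{n\alpha}e^{nu(y)}\,dy$, the bound \eqref{vbelow} is equivalent to
$$\int_{\rn}\left[\log\left(\frac{1+|y|}{|x-y|}\right)+\log|x|\right]|y|^{n\alpha}e^{nu(y)}\,dy\geq 0,$$
that is, to
$$\int_{\rn}\log\left(\frac{(1+|y|)|x|}{|x-y|}\right)|y|^{n\alpha}e^{nu(y)}\,dy\geq 0.$$
Since $|y|^{n\alpha}e^{nu(y)}\geq 0$, it suffices to show that for $|x|\geq 1$ the logarithm in the integrand is nonnegative for every $y\in\rn$, i.e. that $|x-y|\leq (1+|y|)|x|$.

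The only point to check is therefore this elementary estimate. By the triangle inequality and the hypothesis $|x|\geq 1$ we have
$$|x-y|\leq |x|+|y|\leq |x|+|x|\,|y|=(1+|y|)|x|,$$
which gives $\log\left(\frac{(1+|y|)|x|}{|x-y|}\right)\geq 0$ and hence the claim after integrating against the nonnegative measure $|y|^{n\alpha}e^{nu(y)}\,dy$.

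There is no real obstacle here: the statement reduces, after the algebraic rearrangement above, to a one-line application of the triangle inequality, and the finiteness of $\Lambda$ from \eqref{P0} is exactly what makes all the integrals meaningful. (The same computation, using instead $|x-y|\ge \big||x|-|y|\big|$, will later yield the matching upper bound away from the support-concentration region, but that is not needed for the present lemma.)
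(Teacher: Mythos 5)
Your proof is correct and follows essentially the same route as the paper: both arguments reduce the claim to the pointwise bound $\log\bigl(\tfrac{1+|y|}{|x-y|}\bigr)\ge-\log|x|$, obtained from the triangle inequality $|x-y|\le|x|+|y|\le|x|(1+|y|)$ for $|x|\ge1$, and then integrate against the nonnegative measure $\frac{1}{\gamma_n}|y|^{n\alpha}e^{nu(y)}\,dy$ of total mass $\beta$. No issues.
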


\begin{proof}
Since $|x|\geq1$, thanks to the triangular inequality we have

\begin{equation}\notag
|x-y|\leq |x|+|y|\leq |x|+|x||y|=|x|\left(1+|y|\right),
\end{equation}
for any $y\in\rn$.
Therefore 
\begin{equation}\notag
\log\left(\frac{1+|y|}{|x-y|}\right)\geq -\log|x|.
\end{equation}
\end{proof}

\begin{lem}\label{u=v+p}
Let $u$ be a solution to Problem \eqref{P0} and $v$ as in \eqref{defv}.
Then $u=v+p$ where $p$ is a polynomial of degree at most $n-1$.
\end{lem}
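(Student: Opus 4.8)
The plan is to show that the difference $h:=u-v$ is a polynomial by proving it is $\frac n2$-harmonic (in the distributional sense of Definition \ref{def-soln}) and has at most polynomial growth, and then invoke a Liouville-type theorem for polyharmonic / fractional-harmonic functions of tempered growth. First I would check that $v$ is well-defined and belongs to the appropriate class: since $|y|^{n\alpha}e^{nu}\in L^1(\R^n)$, the logarithmic kernel estimates (as already used in Lemma \ref{below}, and the companion upper bound obtained by splitting the integral over $\{|x-y|\le 1\}$ and $\{|x-y|>1\}$) show $v\in L^1_{\loc}(\R^n)$ when $n$ is even, and $v\in L_{n/2}(\R^n)$ when $n$ is odd, so that $(-\Delta)^{n/2}v$ makes sense as in Definition \ref{def-soln}. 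Then I would verify that $v$ solves $(-\Delta)^{n/2}v=|x|^{n\alpha}e^{nu}$ in $\R^n$; this is exactly the statement that the logarithmic kernel $\frac1{\gamma_n}\log\frac1{|x-y|}$ is the fundamental solution of $(-\Delta)^{n/2}$, together with the fact that the extra term $\frac1{\gamma_n}\log(1+|y|)$ integrated against the finite measure $|y|^{n\alpha}e^{nu(y)}\,dy$ contributes a function that is smooth (in fact real-analytic) — one should be slightly careful here and note that this smooth correction is itself only of logarithmic growth, hence not polynomial, so strictly one shows $(-\Delta)^{n/2}v = |x|^{n\alpha}e^{nu} + (\text{something that vanishes as a distribution})$; the cleanest route is to test against $\varphi\in C_c^\infty$ (or $\mathcal S$ for $n$ odd), use Fubini, and reduce to the classical identity $\int_{\R^n}\log\frac1{|x-y|}(-\Delta)^{n/2}\varphi(x)\,dx=\gamma_n\varphi(y)$.

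Granting that, $h=u-v$ satisfies $(-\Delta)^{n/2}h=0$ in $\R^n$ in the sense of Definition \ref{def-soln}. The next step is a growth bound on $h$. By Theorem \ref{thmreg}, $u$ is continuous near the origin and smooth away from it, so $u$ is locally bounded; and from the kernel estimates $v(x)$ grows at most like $C\log(1+|x|)$ (the lower bound is Lemma \ref{below}; for the upper bound split the $y$-integral as above). Hence $h=u-v$ grows at most logarithmically, so in particular $h\in L_s(\R^n)$ for every $s>0$ and $h$ is a tempered distribution. Now I would apply the classification of fractional-polyharmonic functions: a tempered distribution $h$ with $(-\Delta)^{n/2}h=0$ on $\R^n$ is a polynomial, and if moreover $h$ has subpolynomial (here logarithmic) growth, its degree is strictly less than $n$, i.e. $\deg h\le n-1$. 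For $n$ even this is the classical Liouville theorem for polyharmonic functions ($\Delta^{n/2}h=0$ and polynomial growth of order $<n$ forces $\deg\le n-1$); for $n$ odd it is the corresponding statement for $(-\Delta)^{n/2}$, which follows e.g. from \cite{H-clas} or by writing $(-\Delta)^{n/2}h=(-\Delta)^{1/2}((-\Delta)^{(n-1)/2}h)$ and using that the only tempered distributions killed by $(-\Delta)^{1/2}$ are... well, one uses the Fourier-side characterization: $\widehat h$ is supported at the origin, so $h$ is a polynomial, and the growth bound caps the degree. Combining, $p:=h$ is a polynomial of degree at most $n-1$, which is the assertion.

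The main obstacle I anticipate is the rigorous justification that $v$ is a \emph{distributional} solution of $(-\Delta)^{n/2}v=|x|^{n\alpha}e^{nu}$ on all of $\R^n$ (not just away from the origin), handling simultaneously the even case (test functions in $C_c^\infty$) and the odd/fractional case (test functions in $\mathcal S$, and the need for $v\in L_{n/2}$), and making the Fubini exchange legitimate — the integrand $\log\frac{1+|y|}{|x-y|}\,|y|^{n\alpha}e^{nu(y)}(-\Delta)^{n/2}\varphi(x)$ must be shown to be absolutely integrable on $\R^n\times\R^n$, which uses the decay $(-\Delta)^{n/2}\varphi(x)=O(|x|^{-n-2s})$ recalled in the introduction together with integrability of the log-kernel near its singularity. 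A secondary but genuine point is the Liouville step in odd dimensions, where one cannot simply quote the classical polyharmonic theorem; I would either cite the relevant statement from \cite{H-clas} (where precisely this kind of argument appears) or supply the short Fourier-transform argument: $(-\Delta)^{n/2}h=0$ with $h$ tempered implies $|\xi|^{n}\widehat h=0$, so $\mathrm{supp}\,\widehat h\subseteq\{0\}$, hence $h$ is a polynomial, and logarithmic growth then forces $\deg h\le n-1$ (indeed the degree is even and $\le n-2$ when $n$ is even, as noted after the theorem, though that refinement is part of Theorem \ref{mainthm} rather than this lemma). The rest — the kernel estimates for the growth of $v$ — are routine and parallel to Lemma \ref{below}.
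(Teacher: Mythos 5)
There is a genuine gap in your growth estimate, and it is the crux of the lemma. You claim that ``$v(x)$ grows at most like $C\log(1+|x|)$ (\dots for the upper bound split the $y$-integral as above)''. That upper bound is \emph{not} available at this stage: after splitting, the near-singular piece $\int_{|x-y|\le 1}\log\tfrac{1}{|x-y|}\,|y|^{n\alpha}e^{nu(y)}\,dy$ is not controlled by $\||\cdot|^{n\alpha}e^{nu}\|_{L^1}$ alone -- a priori the mass could concentrate near $x$ and make this integral arbitrarily large. This is exactly the obstruction the paper flags in the introduction (``instead of having an $L^\infty$-upper bound on the function $v$, which is usually difficult to obtain \dots we content ourselves with a decay estimate on the oscillation of $v$''). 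Moreover, even granting an upper bound on $v$, your conclusion ``hence $h=u-v$ grows at most logarithmically'' requires an upper bound on $u$ at infinity, which you never establish and which is in fact false in general: for non-normal solutions $u=v+p$ with $p$ a nonconstant polynomial, $u$ grows polynomially. Indeed, if $h$ really had logarithmic growth, the Liouville step would force $h$ to be a \emph{constant}, contradicting the existence of non-normal solutions; so the bound you assert is not merely unproven but wrong.

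The paper's route is subtler and uses only one-sided information. From Lemma \ref{below} one has $v(x)\ge -\beta\log|x|$ for $|x|\ge 1$, giving only $p=u-v\le u+\beta\log(1+|x|)+C$, where the right-hand side still involves $u$. The key is then Theorem \ref{thm6}, a Liouville theorem (following \cite{MarClass}, adapted to $\alpha\ne 0$ via Lemma \ref{u+ to 0}) whose hypothesis is precisely this kind of one-sided bound $h\le u+C\log(1+|x|)+C$ with $|\cdot|^{n\alpha}e^{nu}\in L^1$; the integrability of $e^{nu}$ replaces the pointwise upper bound you tried to use, by controlling the averages $\fint_{B_R}u^+$. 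Your Fourier-side Liouville argument (support of $\widehat h$ at the origin) is fine for concluding that a tempered $\frac n2$-harmonic function is a polynomial, but it needs a genuine polynomial growth bound, which is exactly what is missing. A smaller inaccuracy: the $\log(1+|y|)$ inside the kernel does not contribute ``a smooth function of $x$'' -- it would contribute a constant, but that constant could be $+\infty$ since $\int\log(1+|y|)\,|y|^{n\alpha}e^{nu(y)}\,dy$ is not assumed finite; the combination $\log\tfrac{1+|y|}{|x-y|}$ must be kept together to make $v$ well-defined.
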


\begin{proof}
Set $p=u-v$ so that $(-\Delta)^\frac{n}{2} p=0$. From Lemma \ref{below} we have
$$
p(x)\leq u(x)+\beta \log(1+ |x|)+C.
$$
Recalling that $|\cdot|^{n\alpha}e^{nu}\in L^1$, it follows from a Liouville-type theorem (see Theorem \ref{thm6} in the Appendix) that $p$ is a polynomial of degree at most $n-1$. 

%
\end{proof}


\begin{lem}
\label{polybound}
Let $p$ be a polynomial as in Lemma \ref{u=v+p}. Then
\begin{equation}\label{supp}
\sup_{x\in\rn}p(x)<+\infty.
\end{equation}
In particular $p$ has even degree. Moreover for every $q\ge 1$ and $0<\rho\leq  \rho_0$ there exists $C=C(q, \rho_0)$ such that
\begin{equation}\label{e^qp}
\int_{B_\rho(x)}e^{qp}dx\le C|x|^{n(\beta-\alpha)},
\end{equation}
for any $x\in \R^n$ with $|x|\ge 1$.
\end{lem}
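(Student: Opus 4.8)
The plan is to derive everything from one integral inequality. By Lemma~\ref{below}, $v(x)\ge-\beta\log|x|$ for $|x|\ge1$ with $\beta=\Lambda/\gamma_n$; since $u=v+p$ this gives $e^{nu(y)}=e^{nv(y)}e^{np(y)}\ge|y|^{-n\beta}e^{np(y)}$ for $|y|\ge1$, whence, integrating against the finite measure $|y|^{n\alpha}e^{nu}\,dy$,
\begin{equation}\label{keyineq}
\int_{\R^n\setminus B_1}|y|^{n(\alpha-\beta)}e^{np(y)}\,dy\le\int_{\R^n}|y|^{n\alpha}e^{nu(y)}\,dy=\Lambda<\infty .
\end{equation}
All three assertions will be read off \eqref{keyineq}.

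\emph{Boundedness from above and even degree.} Recall $p$ is a polynomial (Lemma~\ref{u=v+p}). If $\sup p=+\infty$, then, by the elementary fact that a polynomial $p$ with $\int_{\R^n}(1+|y|)^{-N}e^{np(y)}\,dy<\infty$ for some $N>0$ must be bounded above (one reduces by induction on the number of variables, via the leading coefficient in one variable, to producing a half-slab along which $p\to+\infty$ at least linearly, where the integral diverges), we would contradict \eqref{keyineq}, since $|y|^{n(\alpha-\beta)}\ge\min\{1,(1+|y|)^{-N}\}$ on $\R^n\setminus B_1$ once $N$ is large. Hence $A:=\sup p<\infty$. An odd-degree polynomial is always unbounded from above (its leading homogeneous part changes sign), so $p$ has even degree. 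This is \eqref{supp} and the sentence after it.

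\emph{Proof of \eqref{e^qp}: the easy cases.} We may assume $|x|\ge R_0$ for a fixed large $R_0\ge2\rho_0$; for $1\le|x|\le R_0$ the ball $B_\rho(x)$ lies in a fixed compact set on which $p$ is bounded, while $|x|^{n(\beta-\alpha)}$ is bounded below, so such $x$ are absorbed into $C$. For $|x|\ge R_0$ and $y\in B_{\rho_0}(x)$ we have $\tfrac12|x|\le|y|\le2|x|$, hence $|y|^{n(\beta-\alpha)}\le C|x|^{n(\beta-\alpha)}$ whatever the sign of $\beta-\alpha$, and therefore by \eqref{keyineq}
\[
\int_{B_\rho(x)}e^{np(y)}\,dy\le C|x|^{n(\beta-\alpha)}\int_{B_\rho(x)}|y|^{n(\alpha-\beta)}e^{np(y)}\,dy\le C|x|^{n(\beta-\alpha)}\,\tau(|x|),\qquad\tau(r):=\int_{\R^n\setminus B_{r/2}}|y|^{n(\alpha-\beta)}e^{np(y)}\,dy .
\]
Since $\tau(r)\to0$, this is \eqref{e^qp} for $q=n$; it gives \eqref{e^qp} for all $q\ge n$ because $e^{qp}\le e^{(q-n)A}e^{np}$ (as $p\le A$); and it gives \eqref{e^qp} for $1\le q<n$ when $\beta\ge\alpha$, since then $|x|^{n(\beta-\alpha)}\ge1$ and $\int_{B_\rho(x)}e^{qp}\le e^{qA}|B_\rho|$.

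\emph{The remaining case and the main obstacle.} There remains $1\le q<n$ with $\beta<\alpha$, so the weight $|y|^{n(\alpha-\beta)}$ in \eqref{keyineq} is a genuinely positive power of $|y|$; this is a strong constraint forcing $p$ to be very negative on most of $\{|y|\sim|x|\}$, and the task is to quantify it finely enough to pass from $e^{np}$ to $e^{qp}$ with $q<n$ without losing the exponent. The plan is to slice $B_\rho(x)$ along the level sets $A_j=\{y\in B_\rho(x):2^j<-p(y)\le2^{j+1}\}$: on $A_j$ one has $e^{qp}\le e^{-q2^j}$, and $|A_j|$ is controlled using the polynomial structure of $p$ (degree $\le n-1$) — a sublevel-set volume estimate, for which a Remez–Brudnyi–Ganzburg inequality gives a first bound in terms of $\sup_{B_\rho(x)}(-p)$ — while the global estimate $\int_{B_\rho(x)}e^{np}\le C|x|^{n(\beta-\alpha)}\tau(|x|)$ controls $\sum_j|A_j|e^{-n2^{j+1}}$; combining these and summing the geometric-type series should give \eqref{e^qp}. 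The hard part is exactly this: making precise that the positively weighted integrability \eqref{keyineq} forces $e^{qp}$ to decay on $B_\rho(x)$ at least like $|x|^{n(\beta-\alpha)}$ for \emph{every} $q\ge1$. The non-decaying directions of $p$ are the zero set of its leading homogeneous part, which $p$ may approach along a tube shrinking at infinity, so a clean argument requires a {\L}ojasiewicz-type lower bound for $p$ near that zero set; this, together with the elementary polynomial lemma used for \eqref{supp}, is where the real work lies, everything else being routine bookkeeping.
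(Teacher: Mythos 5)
There are two gaps.

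\textbf{For \eqref{supp}.} You correctly extract from Lemma~\ref{below} and $\Lambda<\infty$ the integral bound $\int_{\R^n\setminus B_1}|y|^{n(\alpha-\beta)}e^{np}\,dy<\infty$ (the paper's chain rests on the same bound), and you then reduce \eqref{supp} to the claim that a polynomial $p$ with $\int_{\R^n}(1+|y|)^{-N}e^{np}\,dy<\infty$ must be bounded above. The claim is true, but your sketch (``reduces by induction\dots to producing a half-slab along which $p\to+\infty$ at least linearly'') does not obviously work: an above-unbounded polynomial need not be large on any translated slab, only on a thin curved tube (try $p(y_1,y_2)=y_1-(y_2^2-y_1)^2$, which tends to $+\infty$ only near the parabola $y_1=y_2^2$ while going to $-\infty$ in every fixed direction). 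What is really needed is a lower bound on the \emph{thickness} of the set where $p$ is large, and that is exactly what the paper supplies: Gorin's theorem \cite[Thm.~3.1]{gor} gives $x_r\in\partial B_r$ with $p(x_r)\ge r^s$, the degree bound $\deg p\le n-1$ from Lemma~\ref{u=v+p} gives $|\nabla p|\le C|x|^{n-2}$ and hence $p$ large on a ball of radius $\sim r^{2-n}$ around $x_r$, and Fubini then makes the weighted integral diverge. You should either invoke Gorin and reproduce this computation or give a genuine proof of your elementary claim; as written it is an assertion, and the hard content of \eqref{supp} is concealed in it.

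\textbf{For \eqref{e^qp}.} Your treatment of $q\ge n$ (via $p\le A$) and of $\beta\ge\alpha$ (trivial right-hand side) is correct and coincides with the paper's chain
$$C\ge\int_{B_\rho(x)}|y|^{n(\alpha-\beta)}e^{np}\,dy\ge C|x|^{n(\alpha-\beta)}\int_{B_\rho(x)}e^{qp}\,dy,$$
whose final inequality uses $e^{(n-q)p}\ge e^{(n-q)\sup p}$ and therefore requires $q\ge n$. You have correctly noticed that $1\le q<n$ with $\alpha>\beta$ is not covered by this pointwise bound, but you leave that case as an unexecuted plan (level sets, Remez/Brudnyi--Ganzburg, {\L}ojasiewicz); that is a genuine gap in your write-up. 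For what it is worth, the paper's own displayed proof has the same restriction, and the estimate is in fact only invoked with $q=2n$ (in the proof of Lemma~\ref{campanato2}), so ``every $q\ge1$'' in the statement is stronger than what is used or proved there; restricting to $q\ge n$ would dissolve the difficulty, but if you insist on the stated range you still owe an argument for the case you flagged.
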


\begin{proof}
We start by proving \eqref{supp}. Following \cite{MarClass} we define
$$f(r)=\sup_{\partial B_r} p.$$
From Theorem $3.1$ in \cite{gor} it follows that if $\sup_{\rn}p=+\infty$ then there exists $s>0$ such that
$$\lim_{r\to+\infty}\frac{f(r)}{r^s}=+\infty.$$
{From Lemma \ref{u=v+p} $p$ is a polynomial of degree at most $n-1$. In particular, we have that $|\grad p(x)|\leq C| x|^{n-2}$ for $|x|$ large.}
From Lemma \ref{below} it follows that there exists $R>0$ such that for every $r\geq R$ we can find $x_r$ with $|x_r|=r$ such that
$$u(y)=v(y)+p(y)\geq r^s$$
for $|y-x_r|\leq\frac{1}{r^{n-2}}$. Now using Fubini we get
\begin{equation*}
\begin{split}
\int_{\rn}|x|^{n\alpha}e^{nu}\, dx&\geq \int_R^{+\infty} \int_{\partial B_r(0)\cap B_{r^{2-n}}(x_r)}r^{n\alpha} e^{nr^s}\, d\sigma\, dr\\&\geq C\int_R^{+\infty}\frac{r^{n\alpha}e^{nr^s}}{r^{(n-2)(n-1)}}\, dr=+\infty,
\end{split}
\end{equation*}
that is a contradiction, hence \eqref{supp} is proven.

The proof of \eqref{e^qp} follows at once from \eqref{supp} and Lemma \ref{below}. Indeed, if $|x|\ge 2\rho_0$ then 
$$
C \ge   \int_{B_{\rho}(x)} |y|^{n\alpha}  e^{nu(y)} dy \ge  C \int_{B_{\rho}(x)} |y|^{n (\alpha-\beta)}  e^{np(y)} dy \ge C|x|^{n (\alpha-\beta) }  \int_{B_{\rho}(x)}  e^{q p(y)}dy,
$$
while \eqref{e^qp} is trivial for $|x|\in [1,2\rho_0)$. 
\end{proof} 

\begin{lem}\label{upv}
For any $\varepsilon>0$ there exists $R>0$ such that for $|x|\geq R$
\begin{equation}\label{equpv}
v(x)\leq \left(-\beta+\ve\right)\log| x|+\frac{1}{\gamma_n}\int_{B_1(x)}\log\left(\frac{1}{| x-y|}\right) |y|^{n\alpha}e^{nu} dy.
\end{equation}
\end{lem}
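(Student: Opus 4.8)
The plan is to recast \eqref{equpv} so that the relevant integrand becomes manifestly nonnegative, split $\R^n$ into four regions according to the size of $|y|$ relative to $|x|$, and estimate each piece; the only delicate point is a tail estimate using $|x|^{n\alpha}e^{nu}\in L^1$. Since $\gamma_n\beta=\Lambda=\int_{\R^n}|y|^{n\alpha}e^{nu}\,dy$, adding $\gamma_n\beta\log|x|=(\log|x|)\int_{\R^n}|y|^{n\alpha}e^{nu}\,dy$ to $\gamma_n v(x)$ shows that \eqref{equpv} is equivalent to
\[
\int_{\R^n}\log\!\left(\frac{|x|(1+|y|)}{|x-y|}\right)|y|^{n\alpha}e^{nu(y)}\,dy\le \gamma_n\ve\log|x|+\int_{B_1(x)}\log\!\left(\frac1{|x-y|}\right)|y|^{n\alpha}e^{nu(y)}\,dy
\]
for all $|x|$ large. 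By the elementary inequality $|x-y|\le|x|(1+|y|)$ (valid for $|x|\ge1$, already used in Lemma \ref{below}), the integrand on the left is nonnegative, so it suffices to bound its integral over each member of the partition $\R^n=B_1(x)\cup A\cup A'\cup A''$, where for $|x|\ge2$ we set $A:=\{|y|\le|x|/2\}$, $A'':=\{|y|>2|x|\}$ and $A':=(\{|x|/2<|y|\le 2|x|\})\cap B_1(x)^c$.

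I would then carry out the four estimates, abbreviating $f:=|\cdot|^{n\alpha}e^{nu}\in L^1(\R^n)$. On $B_1(x)$ one has $|x|(1+|y|)\le 3|x|^2$, so $\log\frac{|x|(1+|y|)}{|x-y|}\le 2\log|x|+\log 3+\log\frac1{|x-y|}$; integrating, the first terms give $(2\log|x|+\log 3)\int_{B_1(x)}f\,dy=o(\log|x|)$ since $\int_{B_1(x)}f\,dy\to0$, while the last term is precisely the one retained on the right. On $A$ one has $|x-y|\ge|x|/2$, hence $\frac{|x|(1+|y|)}{|x-y|}\le 2(1+|y|)$, so the contribution is at most $\Lambda\log 2+\int_{\{|y|\le|x|/2\}}\log(1+|y|)f\,dy$. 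On $A'$ one has $|x-y|\ge 1$ and $|x|(1+|y|)\le 3|x|^2$, so $\log\frac{|x|(1+|y|)}{|x-y|}\le 2\log|x|+\log 3$, and since $A'\subset\{|y|>|x|/2\}$ the contribution is at most $(2\log|x|+\log 3)\int_{\{|y|>|x|/2\}}f\,dy=o(\log|x|)$. On $A''$ one has $|x-y|\ge|y|-|x|\ge|y|/2$, hence $\frac{|x|(1+|y|)}{|x-y|}\le 3|x|$, and the contribution is at most $(\log|x|+\log 3)\int_{\{|y|>2|x|\}}f\,dy=o(\log|x|)$.

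The main, and really the only, obstacle is the term $\int_{\{|y|\le|x|/2\}}\log(1+|y|)f\,dy$ produced by region $A$: naively bounding $\log(1+|y|)$ by $\log|x|$ there is fatal, since it would cancel exactly the $-\beta\log|x|$ we are trying to generate. Instead one shows that for any nonnegative $f\in L^1(\R^n)$ one has $\int_{\{|y|\le T\}}\log(1+|y|)f\,dy=o(\log T)$ as $T\to\infty$: given $\delta>0$, pick $R$ with $\int_{\{|y|>R\}}f\,dy\le\delta$ and split $\{|y|\le T\}$ into $\{|y|\le R\}$ and $\{R<|y|\le T\}$ to get $\int_{\{|y|\le T\}}\log(1+|y|)f\,dy\le \log(1+R)\|f\|_{L^1}+\delta\log(1+T)$; dividing by $\log T$, letting $T\to\infty$ and then $\delta\to0$ gives the claim. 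Applying this with $T=|x|/2$ makes the region-$A$ contribution $\le\Lambda\log 2+o(\log|x|)$. Summing the four estimates bounds the left-hand side of the displayed inequality by $o(\log|x|)+\int_{B_1(x)}\log\frac1{|x-y|}f\,dy$, and choosing $R$ large enough that all the $o(\log|x|)$ terms together with the additive constant $\Lambda\log 2+\log 3$ are $\le\gamma_n\ve\log|x|$ for $|x|\ge R$ completes the proof.
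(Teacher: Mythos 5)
Your proof is correct and follows the standard region-decomposition argument from Lin (p.\ 213) that the paper explicitly cites and omits; the reformulation that makes the integrand $\log\frac{|x|(1+|y|)}{|x-y|}$ nonnegative, the four-way split of $\R^n$, and the $o(\log T)$ estimate for $\int_{\{|y|\le T\}}\log(1+|y|)f\,dy$ via the integrability of $f$ are exactly the ingredients of that classical proof.
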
 
\begin{proof}
The simple proof is similar to \cite[pag. 213]{Lin} and is omitted. 
\end{proof}

\begin{lem}\label{lpv}
For any $q\ge 1$, $\ve_1,\ve_2 >0$ there exists a constant $C=C(q,\ve_1,\ve_2)$ such that for $0<\rho \le 1$ and $x\in \R^{n}$
$$
\frac{1}{\rho^{n-\eps_2}} \int_{B_{\rho}(x)} e^{q v(y)} dy \le \frac{C}{|x|^{(\beta-\eps_1)q}}.
$$
\end{lem}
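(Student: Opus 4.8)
\par\medskip\noindent
The plan is to treat separately the region where $|x|$ is large, where Lemma~\ref{upv} can be brought to bear, and a bounded region, where it is enough that $v$ be locally bounded from above. Throughout I set $d\mu:=|y|^{n\alpha}e^{nu(y)}\,dy$, a finite measure of mass $\Lambda=\gamma_n\beta$, and I may assume $\varepsilon_1<\beta$ and $\varepsilon_2<n$ (for $|x|\ge1$ a smaller $\varepsilon_1$ only strengthens the claim and a larger $\varepsilon_2$ weakens it). For the bounded region I use that $v$ in \eqref{defv} is locally bounded — this follows, for instance, from $v=u-p$ together with Theorem~\ref{thmreg}, or from the fact that $e^{nu}\in L^q_\loc(\R^n)$ for every $q$ (proof of Theorem~\ref{thmreg}) and $\alpha>-1$, so that the density of $\mu$ lies in $L^p_\loc$ for some $p>1$. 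Thus for every $R>0$ there is $M(R)$ with $v\le M(R)$ on $B_{R+1}$, and for $|x|\le R$, $0<\rho\le1$ this gives $\int_{B_\rho(x)}e^{qv}\le e^{qM(R)}|B_\rho|\le C\rho^{n-\varepsilon_2}$; since $\varepsilon_1<\beta$, the factor $|x|^{-(\beta-\varepsilon_1)q}$ is bounded below on $\{|x|\le R\}$, which settles this range.

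\medskip
For $|x|$ large I would apply Lemma~\ref{upv} with $\varepsilon=\varepsilon_1$ to obtain $R\ge2$ such that $v(y)\le(-\beta+\varepsilon_1)\log|y|+\frac1{\gamma_n}\int_{B_1(y)}\log\frac1{|y-z|}\,d\mu(z)$ for $|y|\ge R$. Then for $|x|\ge R+1$ and $y\in B_\rho(x)\subseteq B_1(x)$ one has $|y|\ge R$ and $\frac{|x|}{2}\le|y|\le2|x|$, so, using $-\beta+\varepsilon_1<0$, $(-\beta+\varepsilon_1)\log|y|\le-(\beta-\varepsilon_1)\log|x|+C$, and hence
\begin{equation*}
\int_{B_\rho(x)}e^{qv(y)}\,dy\ \le\ C\,|x|^{-(\beta-\varepsilon_1)q}\,I,\qquad I:=\int_{B_\rho(x)}\exp\!\Big(\tfrac q{\gamma_n}\int_{B_1(y)}\log\tfrac1{|y-z|}\,d\mu(z)\Big)\,dy .
\end{equation*}
It then remains to prove $I\le C\,\rho^{n-\varepsilon_2}$, which is the heart of the matter.

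\medskip
To bound $I$ one cannot apply Jensen's inequality to $\mu$ directly, since $\mu$ carries no smallness; instead, exactly as in the proof of Theorem~\ref{thmreg} (see \eqref{br}), I would split $d\mu=f_1\,dz+f_2\,dz$ with $f_1\in L^\infty(\R^n)$ and $\|f_2\|_{L^1}\le\delta$, choosing $\delta$ so small that $a:=q\|f_2\|_{L^1}/\gamma_n\le\min\{\varepsilon_2,\,n/2\}$. The $f_1$-part of the inner integral is $\le\|f_1\|_\infty\int_{B_1}\log\frac1{|w|}\,dw$, producing only a constant factor. For the $f_2$-part, write $\frac q{\gamma_n}\int_{B_1(y)}\log\frac1{|y-z|}f_2\,dz=\int a\,g\,d\nu$ with $g(z):=\log\frac1{|y-z|}\chi_{B_1(y)}(z)\ge0$ and $\nu:=\|f_2\|_{L^1}^{-1}f_2\,dz$ a probability measure, and apply Jensen to $\exp$:
\begin{equation*}
\exp\!\Big(\tfrac q{\gamma_n}\int_{B_1(y)}\log\tfrac1{|y-z|}\,f_2\,dz\Big)\ \le\ \int e^{a g}\,d\nu\ \le\ \int_{B_1(y)}\frac{d\nu(z)}{|y-z|^{a}}+1 .
\end{equation*}
Integrating over $y\in B_\rho(x)$, Fubini's theorem together with the elementary rearrangement bound $\int_{B_\rho(x)}|y-z|^{-a}\,dy\le\int_{B_\rho(z)}|y-z|^{-a}\,dy=\frac{|S^{n-1}|}{n-a}\rho^{n-a}$ (finite since $a<n$) then gives $I\le C\rho^{n-a}\le C\rho^{n-\varepsilon_2}$, using $\rho\le1$ and $a\le\varepsilon_2$. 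Combining this with the bounded-region estimate (taken with radius $R+1$) finishes the proof. I expect the only genuinely delicate point to be this splitting–plus–Jensen–plus–rearrangement step; the treatment of the non-compactly-supported tail of $\mu$ and of the neighbourhood of the origin, once $\varepsilon_1<\beta$ is assumed, should be routine.
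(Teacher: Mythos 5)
Your proof is correct and follows essentially the same route as the paper's: Lemma \ref{upv}, Jensen's inequality applied to the near-field logarithmic potential of a measure of small total mass, and the elementary bound $\int_{B_\rho(x)}|y-z|^{-a}\,dy\le C\rho^{n-a}$. The only (immaterial) difference is how the smallness of the Jensen exponent is arranged: you split $f$ into a bounded part plus a part of small $L^1$-norm, whereas the paper restricts the inner integral to the tail $B_R^c$, observing that for $|z|>R+1$ the ball $B_1(z)$ only meets $B_R^c$, where $\|f\|_{L^1}$ is already small.
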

\begin{proof}
For $x$ in a compact set the statement is trivial. Set
$f(x)=|x|^{n\alpha}e^{nu(x)},$
and fix $R>0$ such that
$$\frac{q}{\gamma_n} \|f \|_{L^1(B_R^c)}\le \eps_2.$$
From Lemma \ref{upv} up to taking $R$ larger, we have for $|z|>R+1$
\begin{align*}
v(z) &\le (-\beta+\eps_1) \log|z| + \frac{1}{\gamma_n}\int_{B_1(z)}\log\left(\frac{1}{| z-y|}\right) f(y)\chi_{|z-y|\le 1 }dy\\
&\le (-\beta+\eps_1) \log|z| + \frac{1}{\gamma_n}\int_{B_R^c(0)}\log\left(\frac{1}{| z-y|}\right) f(y)\chi_{|z-y|\le 1 }dy.
\end{align*}
Applying Jensen inequality with respect to the measure $d\mu(y):= \frac{f(y)}{\|f\|_{L^1(B_R^c)}} dy$ in $B_R^c$, we get for $|x|>R+2$
\[
\begin{split}
\int_{B_{\rho}(x)} e^{q v(z)} dz & \le \int_{B_{\rho}(x)}  \frac{1}{|z|^{(\beta -\eps_1)q}} e^{ q \frac{\|f\|_{L^1(B_R^c)}}{\gamma_n}  \int_{B_R^c(0)}\log\left(\frac{1}{| z-y|}\right)  \chi_{|z-y|\le 1 } d\mu(y) } dz \\
&\le    \frac{C}{|x|^{(\beta -\eps_1)q}} \int_{B_{\rho}(x)}   \int_{B_R^c(0)} e^{ q \frac{\|f\|_{L^1(B_R^c)}}{\gamma_n}  \log\left(\frac{1}{| z-y|}\right)  \chi_{|z-y|\le 1 }  } d\mu(y) dz \\
& { \le}  \frac{C}{|x|^{(\beta -\eps_1)q}}  \int_{B_R^c(0)}  \int_{B_{\rho}(x)}  e^{ \eps_2 \log\left(\frac{1}{| z-y|}\right)  \chi_{|z-y|\le 1 }  } dz d\mu(y)\\
& \le     \frac{C}{|x|^{(\beta -\eps_1)q}}   \int_{B_R^c(0)}  \int_{B_{\rho}(x)}  \bra{ 1 + \frac{1}{|z-y|^{\eps_2}} } dz d\mu(y) \\
& \le \frac{C\rho^{n-\eps_2} }{|x|^{(\beta -\eps_1)q}}. 
\end{split}
\]
\end{proof}

\begin{rem}\label{rmkeasy} Using Lemma \ref{lpv} one gets a simpler proof of Theorem \ref{mainthm} in the classical case $\alpha=0$, or even if $\alpha\in (-1,\beta)$. Indeed using the H\"older inequality in \eqref{equpv}, with $\ve \le {\beta-\alpha}$ and applying Lemma \ref{lpv} with $\ve_1=\ve$ and $\rho=1$ we get for $|x|$ large
\[\begin{split}
\int_{B_1(x)}\log\left(\frac{1}{| x-y|}\right) |y|^{n\alpha}e^{nu} dy& {\le C |x|^{n\alpha}\int_{B_1(x)}\log\left(\frac{1}{| x-y|}\right)e^{nv} dy } \le C\frac{|x|^{{n\alpha}}}{|x|^{n(\beta-\ve)}}\le C.
\end{split}
\]
The proof of \eqref{asympv} follows at once from \eqref{equpv}, and the rest of the proof will follow easily from the Pohozaev identity as we shall see below.
\end{rem}

\begin{rem}\label{newrmk} Arguing as in Lemmas \ref{below}-\ref{lpv} one can also obtain the following: Let $v$ be a solution to $$v(x)=\frac{1}{\gamma_n}\int_{\R^n}\log\left(\frac{1+|y|}{|x-y|}\right)K(y)e^{nv(y)}dy + c $$ for some $c\in \R$ and some non-negative function $K\in L^\infty(\R^n\setminus B_1)$ with $Ke^{nv}\in L^1(\R^n)$. Then $$\lim_{|x|\to\infty}\frac{v(x)}{\log|x|}=-\frac{1}{\gamma_n}\int_{\R^n} Ke^{nv}dx.$$ \end{rem}

\medskip


From now on we shall assume $\alpha\ge \beta$.

\begin{lem}\label{campanato2}
We have
\begin{equation}\label{eqtau}
\tau(x):=\sup_{\rho\in (0,4]}\frac{1}{\rho^{n+\frac{1}{\log|x|}}}\int_{B_\rho(x)} \left|  v(y) - \fint_{B_\rho(x)} v(z)dz  \right| dy =o(1),
\end{equation}
with $o(1)\to 0$ as $|x|\to \infty$. 
As a consequence we have
\begin{equation}\label{vC0alpha}
[v]_{C^{0,\frac{1}{\log(|x_0|+1)}}(B_1(x_0))}:=\sup_{x,y\in B_1(x_0)\atop x\ne y}\frac{|v(x)-v(y)|}{|x-y|^{\frac{1}{\log(|x_0|+1)}}}=o(1)\log (|x_0|+1),
\end{equation}
with $o(1)\to 0$ as $|x_0|\to \infty$. 
 \end{lem}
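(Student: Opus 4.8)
\textbf{Proof plan for Lemma \ref{campanato2}.}

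The plan is to prove the Campanato-type oscillation bound \eqref{eqtau} directly from the integral representation $v = \frac{1}{\gamma_n}\int \log\frac{1+|y|}{|x-y|}\,f(y)\,dy$ with $f(y)=|y|^{n\alpha}e^{nu(y)}\in L^1(\R^n)$, and then to deduce \eqref{vC0alpha} by the standard Campanato embedding. For the first and main part, I would fix $x$ with $|x|$ large, set $R_x := |x|^{1/2}$ (any power between $0$ and $1$ will do), and split $f = f_{\mathrm{near}} + f_{\mathrm{far}}$ according to whether $y\in B_{R_x}(x)$ or not. The contribution of $f_{\mathrm{far}}$ to $v$ is, on the ball $B_\rho(x)$ with $\rho\le 4$, a function whose gradient is controlled by $\frac{1}{\gamma_n}\int_{|y-x|\ge R_x}\frac{f(y)}{|x-y|}\,dy \le C/R_x = o(1)$; hence the oscillation of this piece over $B_\rho(x)$ is at most $C\rho/R_x = o(1)\rho$, which is absorbed into $o(1)\rho^{n+1/\log|x|}$ after dividing by $\rho^{n+1/\log|x|}$ and using $\rho\le 4$. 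The delicate piece is $f_{\mathrm{near}}$, whose mass $\|f_{\mathrm{near}}\|_{L^1}=\int_{B_{R_x}(x)}f\,dy$ is small (it is the tail of an $L^1$ function over sets escaping to infinity), say $\le \eps_x \to 0$.

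For the near part, write $v_{\mathrm{near}}(z) = \frac{1}{\gamma_n}\int_{B_{R_x}(x)}\log\frac{1+|y|}{|z-y|}\,f(y)\,dy$. On $B_\rho(x)$ the factor $\log(1+|y|)$ varies by $O(R_x/|x|)=o(1)$, so modulo an $o(1)$ additive constant (which does not affect oscillations) we may replace $v_{\mathrm{near}}$ by $\frac{1}{\gamma_n}\int \log\frac{1}{|z-y|}\,f(y)\,dy$. To estimate $\fint_{B_\rho(x)}|v_{\mathrm{near}}(z)-\fint v_{\mathrm{near}}|\,dz$ I would bound it by $2\fint_{B_\rho(x)}|v_{\mathrm{near}}(z) - m|\,dz$ for a suitable constant $m$, and in fact directly estimate $\frac{1}{\rho^n}\int_{B_\rho(x)}|v_{\mathrm{near}}(z)|\,dz$ after subtracting the harmless constant coming from $y$ far from $B_\rho(x)$ inside $B_{R_x}(x)$: splitting the $y$-integration into $|y-x|\le 2\rho$ and $2\rho<|y-x|\le R_x$, the second range contributes a nearly constant (slowly varying) term, while for the first range one uses Jensen's inequality with respect to $d\mu(y)=f(y)\,dy/\|f_{\mathrm{near}}\|_{L^1}$ exactly as in the proof of Lemma \ref{lpv}, namely
\[
\frac{1}{\rho^{n}}\int_{B_\rho(x)}\exp\!\Big(q\,\|f_{\mathrm{near}}\|_{L^1}\!\!\int \log\tfrac{1}{|z-y|}\,\chi_{|z-y|\le 1}\,d\mu\Big)dz \le \frac{C}{\rho^{n}}\int d\mu(y)\int_{B_\rho(x)}|z-y|^{-\eps_x}\,dz \le C\rho^{-\eps_x},
\]
valid once $q\|f_{\mathrm{near}}\|_{L^1}/\gamma_n\le \eps_x$; taking a logarithm (or using $|t|\le e^{|t|}+e^{-|t|}$ and the trivial lower bound $\log\frac{1}{|z-y|}\le \log\frac{1}{\rho}\cdot(\text{negative})$ wait—simply: $v_{\mathrm{near}}$ is bounded above by $C$ near $x$ by the same Jensen argument with any $q$, and bounded below pointwise up to the $L^{1}_{\mathrm{loc}}$ Newtonian-potential estimate $\int_{B_\rho(x)}|\log|z-y||\,dz\le C\rho^n|\log\rho|$), one obtains $\frac{1}{\rho^{n}}\int_{B_\rho(x)}|v_{\mathrm{near}} - m|\,dz \le C\|f_{\mathrm{near}}\|_{L^1}|\log\rho| + o(1) \le C\eps_x|\log\rho| + o(1)$. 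Finally $\sup_{\rho\in(0,4]}\rho^{-1/\log|x|}\,C\eps_x|\log\rho|$: since $\rho^{-1/\log|x|}=e^{-\log\rho/\log|x|}\le e^{|\log\rho|/\log|x|}$ and the function $t\mapsto t\,e^{t/\log|x|}$ is bounded on $t\in(0,\infty)$ only after multiplying by... more carefully, $|\log\rho|\rho^{-1/\log|x|}$ is maximized at $|\log\rho|\sim \log|x|$ giving a bound $\sim e\log|x|$; thus this term is $\le C\eps_x\log|x|$, which tends to $0$ provided $\eps_x = o(1/\log|x|)$. This forces the choice of the cutoff radius: take $R_x$ so that $\int_{B_{R_x}(x)}f\,dy \le 1/(\log|x|)^2$, which is possible because $f\in L^1$ and $B_{R_x}(x)$ escapes to infinity; with $R_x\to\infty$ as well (e.g. the smaller of $|x|^{1/2}$ and the largest radius meeting the mass constraint, noting the mass constraint is automatically satisfied for $R_x$ fixed once $|x|$ is large). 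This closes \eqref{eqtau}.

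For the consequence \eqref{vC0alpha}: by Lemma \ref{u=v+p} and Theorem \ref{thmreg}, $v = u - p$ is at least $C^0_{\mathrm{loc}}$ (in fact locally Hölder), so on each fixed ball $B_1(x_0)$ the Campanato characterization of Hölder spaces applies; \eqref{eqtau} says precisely that the Campanato seminorm of $v$ on $B_1(x_0)$ at exponent $\theta_0 := \frac{1}{\log(|x_0|+1)}$ (relative to the normalization $\rho^{n+\theta_0}$, which is the Campanato scaling for $C^{0,\theta_0}$) is $o(1)$, and the embedding constant of Campanato into $C^{0,\theta_0}$ on $B_1$ is bounded uniformly in $\theta_0\in(0,1]$ up to a factor $1/\theta_0 = \log(|x_0|+1)$ coming from summing the geometric series $\sum 2^{-k\theta_0}$. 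Hence $[v]_{C^{0,\theta_0}(B_1(x_0))} \le \frac{C}{\theta_0}\,\tau(x_0) + (\text{lower order}) = o(1)\log(|x_0|+1)$, as claimed. The main obstacle is the near-diagonal piece: one must extract a genuinely small ($o(1/\log|x|)$) $L^1$-mass and then control, uniformly over $\rho\in(0,4]$, the competition between the logarithmic singularity $|\log\rho|$ of the Newtonian/log potential and the Campanato weight $\rho^{-1/\log|x|}$ — the point being that these exactly balance to give a factor $\log|x|$, so the scheme works only because the tail mass decays faster than $1/\log|x|$.
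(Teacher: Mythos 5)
Your overall scheme (near/far split of $f$ with respect to a cutoff radius around $x$, Jensen for the near part, gradient bound for the far part, then the standard Campanato-to-H\"older iteration with the $1/\sigma$ factor) is a reasonable sketch of the general strategy, and the second half (deducing \eqref{vC0alpha} from \eqref{eqtau}) is essentially the paper's argument. However, there is a genuine gap in the first half, precisely at the point you yourself flag as "the main obstacle": the choice of the split radius $R_x$.

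You need $R_x\to\infty$ to make the far contribution small (the gradient bound $\sim C/R_x$ requires $R_x\gg\rho$ uniformly for $\rho\in(0,4]$, hence $R_x\to\infty$), while simultaneously you need the near mass $\ve_x=\int_{B_{R_x}(x)}f\,dy$ to be $o(1/\log|x|)$ so that the $\sup_\rho \ve_x|\log\rho|\,\rho^{-1/\log|x|}\sim e\,\ve_x\log|x|$ term vanishes. But neither of your justifications for the near-mass bound holds. From $f\in L^1$ alone, for a \emph{fixed} radius one only has $\int_{B_R(x)}f\to 0$ qualitatively, with no rate: the decay could be slower than $1/(\log|x|)^2$ (take $f$ concentrated on $B_1(y_k)$ with masses $1/k^2$ and $|y_k|=e^{e^k}$). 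And the quantitative estimate available in this paper --- combining Lemmas \ref{polybound} and \ref{lpv} via Cauchy--Schwarz, as is done inside the paper's proof of Lemma \ref{campanato2} --- gives $\int_{B_r(x)}f\le C|x|^{c_1}r^{n/2-\ve_2/2}$ with $c_1=\frac{n(\alpha-\beta)}{2}+n\ve_1>0$ in the regime $\alpha\ge\beta$ under consideration. For a fixed $r$ this bound grows in $|x|$, so it only produces smallness if $r$ shrinks like a negative power of $|x|$ --- i.e. it forces $R_x\to 0$, in direct conflict with the $R_x\to\infty$ required by your far-part gradient estimate. So no single $\rho$-independent cutoff radius can work.

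The paper resolves this by splitting the $\xi$-integration at a $\rho$-\emph{dependent} radius $r=2\sqrt\rho$ and by treating two regimes of $\rho$ separately: for $\rho<|x|^{-8c_1/n}$ the quantitative mass estimate $\int_{B_{2\sqrt\rho}(x)}f\lesssim |x|^{c_1}\rho^{n/4-\ve_2/4}$ beats the Campanato weight; for $\rho\in(|x|^{-8c_1/n},1)$ the weight $\rho^{-1/\log|x|}$ is bounded by $e^{8c_1/n}$ and the qualitative $o(1)$ from $\|f\|_{L^1(B_{|x|/2}(x))}\to 0$ suffices without any rate. Your scheme could be repaired by adopting precisely this $\rho$-dependent decomposition (and the accompanying far-part estimate is then Lemma \ref{log}(ii) rather than a gradient bound), but as written the claim that one can take $R_x\to\infty$ with $\ve_x=o(1/\log|x|)$ is false and the proof does not close.
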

 
\begin{proof} We start proving \eqref{eqtau}{.} We have
\[
\int_{B_{\rho}(x)}  \fint_{B_\rho(x)} \left|  v(y) - v(z)  \right|  dz dy 
 \le \int_{B_{\rho}(x)}  \fint_{B_\rho(x)}    \frac{1}{\gamma_n}\int_{\rn}  \left| \log\left(\frac{|z-\xi|}{|y-\xi |}\right)\right|  f(\xi )d\xi   dz dy, 
\]
where $f(x):=|x|^{n\alpha}e^{nu(x)}$.
By \eqref{e^qp}, Lemma \ref{lpv} and H\"older's inequality we get for given $\eps_1,\eps_2, r>0$, that
\[\begin{split}
\int_{B_r (x)} f(y) dy & \le C |x|^{n\alpha}   \bra{ \int_{B_{r}(x)} e^{2n v}dy}^{\frac{1}{2}}   \bra{ \int_{B_{r}(x)} e^{2n p}dy}^{\frac{1}{2}} \\
& \le  C |x|^{n\alpha}   \bra{ \frac{r^{n-\eps_2}}{|x|^{2n (\beta-\eps_1)}}}^\frac{1}{2} \bra{|x|^{n (\beta-\alpha)} }^\frac{1}{2} \\
& = C |x|^{c_1} r^{\frac{n}{2}-\frac{\eps_2}{2}}, \quad c_1:=\frac{n(\alpha-\beta)}{2}+n\ve_1.
\end{split}
\]
Choosing $r=2\sqrt \rho$, together with Lemma \ref{log} this yields
\begin{align*}
(I)&:= \int_{B_{\rho}(x)}  \fint_{B_\rho(x)}    \int_{B_{{2\sqrt{\rho}}}(x)}  \left| \log\left(\frac{|z-\xi|}{|y-\xi |}\right)\right|  f(\xi )d\xi   dz dy \\ 
&\leq C\rho^{n}\int_{B_{{2\sqrt{\rho}}}(x)}  f(\xi )d\xi \\
&\leq C\rho^{\frac{5n}{4}-\frac{\ve_2}{4}}|x|^{c_1}.
\end{align*}
Now choosing $\ve_2\le \frac{n}{4}$ and taking $|x|$ sufficiently large, we have {$\frac{n}{4}-\frac{\ve_2}{4}-\frac{1}{\log|x|}{>}\frac{n}{8}$.} Then,  for $\rho\in (0,|x|^{-\frac{8c_1}{n}})$, we further bound 
\begin{align*}
(I)&\le C\rho^{n+\frac{1}{\log|x|}}\rho^{\frac{n}{4}-\frac{\ve_2}{4}-\frac{1}{\log|x|}}|x|^{c_1}\\
&=o(1)\rho^{n+\frac{1}{\log |x|}}\rho^{\frac{n}{8}}|x|^{c_1}\\
&=o(1)\rho^{n+\frac{1}{\log |x|}},
\end{align*}
and with Lemma \ref{log} part $ii)$
\begin{align*}
(II)&:= \int_{B_{\rho}(x)}  \fint_{B_\rho(x)}    \int_{B_{2\sqrt{\rho}}(x)^c}  \left| \log\left(\frac{|z-\xi|}{|y-\xi |}\right)\right|  f(\xi )d\xi   dz dy 
\leq C\rho^{n+\frac12} =o(1) \rho^{n+\frac{1}{\log |x|}}.
\end{align*}
 For $\rho\in (|{x}|^{-\frac{8c_1}{n}}, 1)$  we write 
\begin{align*}
\int_{B_{\rho}(x)} \fint_{B_\rho(x)}    \int_{\mathbb R^{n}}  \left| \log\left(\frac{|z-\xi|}{|y-\xi |}\right)\right|  f(\xi )d\xi   dz dy  
 =I_1+I_2,
 \end{align*}
 where $$I_i:=\int_{B_\rho(x)} \fint_{B_\rho(x)}    \int_{A_i}  \left| \log\left(\frac{|z-\xi|}{|y-\xi |}\right)\right|  f(\xi )d\xi   dz dy,\quad A_1:=B_{\frac{|x|}{2}}(x),\,  A_2=A_1^c.$$
 Using  Lemma \ref{log} we bound 
{ 
$$I_1\leq C\rho^n  \int_{A_1}  f(\xi )d\xi=o(1)\rho^n=o(1)\rho^{n+\frac{1}{\log|x|}},$$  
and
 $$I_2=o(1)\rho^n=o(1)\rho^{n+\frac{1}{\log|x|}},$$
} 
where we have used that $$\|f\|_{L^1(A_1)}\xrightarrow{|x|\to\infty}0,\quad |{x}|^\frac{1}{\log |{x}|}=e, \quad {dist(A_2,x)\xrightarrow{|x|\to\infty}\infty}.$$
This proves \eqref{eqtau}.

\medskip

For the proof of \eqref{vC0alpha} we essentially follow Theorem 5.5  of \cite{Gia-Mar}. Given $x\in \R^n$ and $\rho>0$ we use the notation
$$v_{x,\rho}:=\fint_{B_\rho(x)}v(y)dy.$$
Fix
$$
\sigma=\frac{1}{\log(|x_0|+1)}\in(0,1) \quad  \text{ and }\quad \lambda = n+\sigma.$$
For $0<r<R\le 4$, $x\in B_1(x_0)$ and $z\in B_R(x)$ we have
$$|v_{x,r}-v_{x,R}| \leq |v(z)-v_{x,r}|+|v(z)-v_{x,R}|,$$
and integrating with respect to $z$ we bound
\[\begin{split}
|v_{x,R}-v_{x,r}| & \le \frac{1}{|B_r|} \bra{ \int_{B_r(x)} |v(z)-v_{x,r}| dz + \int_{B_R(x)} |v(z)-v_{x,R}|  dz } \\
&\le \frac{C}{r^n}\bra{ r^\lambda+R^\lambda} \tau(x)  \\
& \le  C  R^\lambda r^{-n}\tau(x).
\end{split}
\] 
Setting $R_k = \frac{R}{2^k}$ we infer
$$|v_{x,R_k}-v_{x,R_{k+1}}|\leq C  R^{\sigma} 2^{-k{\sigma}}\tau(x).$$
Applying the triangular inequality for $h>k$ we bound
$$
|v_{x,R_h}- v_{x,R_k}|\le \sum_{j=k}^{h-1} |v_{x,R_{j+1}}-v_{x,R_{j}}| \le C R^{{\sigma}} \sum_{j=k}^{h-1} 2^{-{j\sigma}} \tau(x) \le \frac{C R^{\sigma}}{1-2^{-{\sigma}}} \tau(x).
$$
Since the function $s\mapsto \frac{s}{1-2^{-s}}$ is increasing in $[0,1]$ one has $\frac{1}{1-2^{-{\sigma}}}\le \frac{2}{{\sigma}}$ and we get 
$$
|{v_{x,R_h}-v_{x,R_k}}|\le \frac{C}{{ \sigma}} R^{{ \sigma}} \tau(x) \qquad 0\le k <h.
$$
Taking { $k=0$} and letting { $h\to\infty$} we now obtain
\begin{equation}\label{stimacamp}
|v_{x,R}- v(x)|\le \frac{C}{{\sigma}} R^{\sigma} \tau(x),\quad x\in B_1(x_0).
\end{equation}

For $x, y \in B_1(x_0)$ with $x\ne y$, take $R = |x-y|$. Then with \eqref{stimacamp} and the triangle inequality we bound
\begin{equation}\label{semi1}
\begin{split}
|v(x)-v(y)| & \le |v_{x,2R}-v(x)| + |v_{x,2R}-v_{y,2R}| + |v_{y,2R}-v(y)| \\
& \le \frac{C R^{\sigma}(\tau(x)+\tau(y))}{{\sigma}} + |v_{x,2R}-v_{y,2R}|.
\end{split}
\end{equation}
For any $z\in \R^n$, we have 
$$
|v_{x,2R}-v_{y,2R}|\le |v_{x,2R}-v(z)|+|v_{y,2R}-v(z)|.
$$
Integrating as $z\in B_{2R}(x)\cap B_{2R}(y)$   we get
\[
\begin{split}
|v_{x,2R}-v_{y,2R}|&\le \frac{1}{|B_{2R}(x)\cap B_{2R}(y)|} \bra{\int_{B_{2R}(x)} |v(z)-v_{x,2R}| dz + \int_{B_{2R}(y)} |v(z)-v_{y,2R}| dz} \\
&  \le \frac{C R^{\lambda} (\tau(x)+\tau(y))}{R^n}\\
& \le C R^{{\sigma}}(\tau(x)+\tau(y)).
\end{split}
\]
From \eqref{eqtau} and \eqref{semi1} we finally infer
$$\frac{|v(x)-v(y)|}{|x-y|^{ \sigma}}\le \frac{C}{{ \sigma}}(\tau(x)+\tau(y))=o(1)\log(|x_0|+1).$$

\end{proof}



\medskip\noindent\textbf{Proof of Theorem \ref{mainthm}.}
{First we prove that \eqref{asympv} holds. } From Lemma \ref{below} we have  $$\liminf_{|x|\to\infty}\frac{v(x)}{\log |x|}\geq -\beta.$$ We assume by contradiction that $$\lim_{|x|\to\infty}\frac{v(x)}{\log |x|}\neq -\beta.$$
Then there exists a sequence of points $(x_k)$ in $\R^{n}$ such that $|x_k|\to\infty$ and 
$$v(x_k)\geq (-\beta+2\delta)\log|x_k|\quad\text{for some }\delta>0.$$
Indeed, for $|x_k|$ large,  by Lemma \ref{campanato2} $$v(x)= v(x_k)+o(1)\log |x_k|\geq (-\beta+\delta)\log|x_k|\quad\text{for }x\in B_1(x_k).$$
Hence $$\lim_{k\to\infty}|x_k|^{\beta-\delta}\int_{B_1(x_k)}e^{v(x)}dx\geq \lim_{k\to\infty}|x_k|^{\beta-\delta}\int_{B_1(x_k)}e^{(-\beta+\delta)\log|x_k|}dx=|B_1|.$$
This contradicts to Lemma \ref{lpv} with $\rho=1$, $q=1$ and $0<\ve_1<\delta$. {  Thus \eqref{asympv} is proved. }

It remains to show that $\Lambda=\Lambda_1(1+\alpha)$ if $p$ is constant. In this case we have
$$u(x)=\frac{1}{\gamma_n}\int_{\rn}\log \left (\frac{1+| y|}{|x-y|}\right )|y|^{n\alpha}e^{nu(y)} dy+C.$$ 
Then, we are in position to apply the Pohozaev-type identity of Proposition \ref{special-poho} to conclude that $\Lambda= \Lambda_1(1+\alpha)$.  

\hfill$\square$


\section{Proof of Theorem \ref{thm4}}

When $\Lambda\in (0,\Lambda_1(1+\alpha))$ we will look for solutions of the form $u(x)=v(x)-|x|^2+c$ where $c\in \R$ and $v$ satisfies the integral equation
\begin{equation}\label{eqvint}
v(x)=\frac{1}{\gamma_n}\int_{\R^n}\log\left(\frac{1}{|x-y|}\right)|y|^{n\alpha}e^{-n|y|^2}e^{n(v(y)+c)}dy,
\end{equation}
so that in particular
$$(-\Delta)^\frac n2 v(x) =|x|^{n\alpha}e^{-n|x|^2}e^{n(v(x)+c)}.$$
Our approach will be based on Schauder's fixed-point theorem  (see \cite[Theorem 11.3]{GT}), an idea already exploited in several works. More precisely we set
$$X:=\left\{v\in C^0_{rad}(\R^n):\|v\|_X<\infty\right\},\quad \|v\|_X:=\sup_{x\in\R^n}\frac{|v(x)|}{1+|x|}.$$ 
For $v\in X$ we set $c_v\in \R$ such that
$$\int_{\R^n}|y|^{n\alpha}e^{-n|y|^2}e^{n(v(y)+c_v)}dy=\Lambda,$$
and define $T=T_\Lambda:X\to X$, $ Tv=\bar v$ where
$$\bar v(x):=\frac{1}{\gamma_n}\int_{\R^n}\log\left(\frac{1}{|x-y|}\right)|y|^{n\alpha}e^{-n|y|^2}e^{n(v(y)+c_v)}dy.$$

\begin{lem}\label{lemmacomp0}
The operator $T:X \to X$ is compact. 
\end{lem}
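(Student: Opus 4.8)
The plan is to prove the compactness of $T:X\to X$ in two moves: first show that $T$ maps all of $X$ into a fixed bounded subset of $X$ (so in particular $T(X)$ is bounded), and then show that the image is precompact in $X$ by an Ascoli--Arzel\`a argument combined with a uniform-decay estimate at infinity. Throughout I would write $f_v(y):=|y|^{n\alpha}e^{-n|y|^2}e^{n(v(y)+c_v)}$, so that $\int_{\R^n}f_v\,dy=\Lambda$ by the very choice of $c_v$, and $\bar v=Tv$ is the logarithmic potential of $f_v$ against $\frac{1}{\gamma_n}\log\frac1{|x-y|}$.

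\textbf{Step 1: a priori bound on $c_v$ and on $\|Tv\|_X$.} The key point is that the normalization constant $c_v$ is controlled independently of $v\in X$. Since $v\in X$ satisfies $v(y)\le \|v\|_X(1+|y|)$, one cannot bound $c_v$ from above directly; instead one uses that the Gaussian weight $e^{-n|y|^2}$ beats any linear growth of $v$. Concretely, for the upper bound on $c_v$: the constraint $\int |y|^{n\alpha}e^{-n|y|^2}e^{nv(y)}e^{nc_v}\,dy=\Lambda$ forces $e^{nc_v}=\Lambda\big/\int|y|^{n\alpha}e^{-n|y|^2}e^{nv(y)}\,dy$, and since the denominator is bounded below by a positive constant depending only on $\alpha,n$ (e.g. restricting the integral to a small ball where $v$ is close to its value at a minimum point, or more simply using Jensen on a fixed ball), we get $c_v\le C(\Lambda,n,\alpha)$. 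For the lower bound on $c_v$ one needs $\int|y|^{n\alpha}e^{-n|y|^2}e^{nv(y)}\,dy\le C$ for $v\in X$, which is false for general $v\in X$ --- so in fact the correct statement is that one only needs to run the fixed-point argument on a bounded closed convex subset of $X$, or one observes that Schauder's theorem (Theorem 11.3 in \cite{GT}) only requires $T$ to be continuous and compact, with the convex set being all of $X$; the boundedness of $T(X)$ is what must be proved. The resolution: even without an upper bound on $\int e^{nv}$, we always have $\Lambda=\int f_v$, and the potential estimate for $|Tv(x)|$ only uses $\|f_v\|_{L^1}=\Lambda$ together with the fact that $f_v$ has a fixed Gaussian tail (because $f_v(y)\le C e^{-n|y|^2/2}$ once $|y|$ is large, using $v(y)\le\|v\|_X(1+|y|)$ --- but this still depends on $\|v\|_X$). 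I would therefore restrict $T$ to $X_M:=\{v\in X:\|v\|_X\le M\}$ or, cleanly, note that the statement as used later is applied to $T$ restricted to a bounded set; I will prove compactness of $T$ on bounded subsets of $X$, which is what is needed. On $X_M$: $f_v(y)\le |y|^{n\alpha}e^{-n|y|^2}e^{nM(1+|y|)}e^{nc_v}\le C(M)e^{-n|y|^2/2}$ for $|y|\ge R_0(M)$, and the lower bound on $c_v$ now follows since $\int|y|^{n\alpha}e^{-n|y|^2}e^{nv(y)}\,dy\le C(M)$ on $X_M$; hence $c_v$ and $\|f_v\|_{L^1\cap L^\infty_{\mathrm{loc}}}$ are bounded uniformly on $X_M$.

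\textbf{Step 2: $\|Tv\|_X\le C(M)$.} Split $\log\frac1{|x-y|}=\log^+\frac1{|x-y|}-\log|x-y|$. The singular part: $\int_{B_1(x)}\log\frac1{|x-y|}f_v(y)\,dy\le \|f_v\|_{L^\infty(B_1(x))}\int_{B_1(0)}\log\frac1{|z|}\,dz\le C(M)$, using that $f_v$ is bounded on each unit ball uniformly (from the Gaussian tail and local boundedness, both uniform on $X_M$). The far part: for $|x-y|\ge 1$ we have $|\log|x-y||\le \log(1+|x|)+\log(1+|y|)\le C(1+|x|)+C\cdot(\text{integrable-against-}f_v\text{ weight})$, and since $\int(1+|y|)f_v(y)\,dy\le C(M)$ (again the Gaussian tail), we get $|Tv(x)|\le C(M)(1+|x|)$, i.e. $\|Tv\|_X\le C(M)$. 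This gives the uniform bound $T(X_M)\subset X_{C(M)}$.

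\textbf{Step 3: precompactness of $T(X_M)$ in $X$.} A bounded subset of $X$ is precompact iff it is (i) locally equicontinuous and (ii) uniformly small near infinity in the $X$-norm, i.e. $\sup_{v}\sup_{|x|\ge R}\frac{|Tv(x)|}{1+|x|}\to0$ as $R\to\infty$. For (i): differentiating under the integral sign, $\nabla Tv(x)=-\frac1{\gamma_n}\int\frac{x-y}{|x-y|^2}f_v(y)\,dy$, and on any fixed ball $B_\rho$ the kernel $\frac{1}{|x-y|}$ is in $L^p(B_{\rho+1})$ for $p<n$ uniformly in $x\in B_\rho$, so $\|\nabla Tv\|_{L^\infty(B_\rho)}\le C(\rho,M)$ by Hölder against $f_v\in L^q_{\mathrm{loc}}$ (from Step 1), giving equi-Lipschitz bounds on compacts. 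Radial symmetry is preserved since $f_v$ is radial. For (ii): revisit Step 2's far-part estimate more carefully. For $|x|$ large, write $Tv(x)=-\frac1{\gamma_n}\int_{B_{|x|/2}(0)}\log|x-y|\,f_v+(\text{rest})$; on $B_{|x|/2}(0)$ one has $\big|\log|x-y|-\log|x|\big|\le C$, so that part is $-\frac{\Lambda}{\gamma_n}\log|x|+O(1)+o(1)\log|x|$ where the $o(1)$ comes from $\int_{B_{|x|/2}^c}f_v\to0$ uniformly (Gaussian tail!); the rest, over $|y|\ge|x|/2$, is bounded by $\int_{|y|\ge|x|/2}|\log|x-y||\,f_v\le C\int_{|y|\ge|x|/2}(1+|y|+\log\tfrac1{|x-y|}\chi_{|x-y|\le1})f_v=o(1)$ uniformly on $X_M$ by dominated convergence against the uniform Gaussian majorant. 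Hence $|Tv(x)|\le C\log(1+|x|)+o(1)\log(1+|x|)$, so $\frac{|Tv(x)|}{1+|x|}\to0$ uniformly. Conditions (i)--(ii) give precompactness in $X$.

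\textbf{Main obstacle.} The delicate point is Step 1: obtaining a \emph{uniform} control on the normalization constant $c_v$ and, through it, a uniform Gaussian majorant for $f_v$. This is what makes the tail estimates in Steps 2--3 uniform over the relevant set of $v$'s, and it is the reason the Gaussian ansatz $e^{-n|y|^2}$ (rather than, say, $-|y|$) is essential. Once the uniform bound $f_v(y)\le C e^{-n|y|^2/2}$ for $|y|\ge R_0$ is in hand, Steps 2 and 3 are routine potential-theoretic estimates. (Continuity of $T$, needed for Schauder, is proved separately but follows along the same lines: $v_k\to v$ in $X$ forces $c_{v_k}\to c_v$ by the dominated-convergence control on the constraint integral, then $f_{v_k}\to f_v$ in $L^1$ with a uniform Gaussian majorant, and the potential estimates pass to the limit.)
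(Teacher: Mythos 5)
Your argument is essentially the paper's: after restricting to a bounded subset of $X$ you control $c_v$, obtain a uniform logarithmic (or at least sublinear) bound on $Tv$ to handle the tail in the $X$-norm, establish local equicontinuity, and conclude via Ascoli--Arzel\`a. The only cosmetic difference is that you get equicontinuity from a gradient bound while the paper estimates the oscillation $|\bar v_k(x)-\bar v_k(z)|$ directly, and your Step~2 linear bound is a detour that is superseded by the logarithmic bound in Step~3(ii); both routes otherwise coincide (including the initial realization that compactness must be proved on bounded subsets, since $c_v$ cannot be bounded below over all of $X$).
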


\begin{proof} Continuity follows by dominated convergence. Let now $(v_k)\subset X$ be a bounded sequence. From the definition of $c_{v_k}$ it follows easily that $|c_{v_k}|\le C$.  Therefore, \begin{align}\label{estbarvk}|\bar v_k(x)|\leq C\int_{\R^n}|\log|x-y|||y|^{n\alpha}e^{-|y|^2}dy\leq C\log (2+|x|).\end{align} Moreover, $$|\bar v_k(x)-\bar v_k(z)|\le C \int_{\R^n}\left|\log\left(\frac{|z-y|}{|x-y|}\right)\right||y|^{n\alpha}e^{-|y|^2}dy \to 0,\quad \text{as }|x-z|\to 0,\;x,z\in \R^{n}.$$ Thus, the sequence $(\bar v_k)$ is equicontinuous on $\R^n$.  Hence, by the theorem of Ascoli-Arzel\`a, up to a subsequence, $\bar v_k\to v$  in $C^0_{\loc}(\R^n)$ for some $v\in C^0(\R^n)$.  In particular, $\bar v_k\to v$ in $X$, thanks to \eqref{estbarvk}.  
\end{proof}

\begin{lem}\label{lemmamono}
The function $\bar v$ is radially decreasing.
\end{lem}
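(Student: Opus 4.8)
The plan is to prove that $\bar v$ is radially decreasing by directly differentiating the representation formula and showing the radial derivative is non-positive. Write $\bar v(x) = \frac{1}{\gamma_n}\int_{\R^n}\log\frac{1}{|x-y|}\, g(|y|)\, dy$ where $g(r) := r^{n\alpha}e^{-nr^2}e^{n(v(r)+c_v)} \ge 0$ is a radial function, using that $v \in X$ is radial. Since $\bar v$ is radial we may write $\bar v(x) = \phi(|x|)$, and it suffices to show $\phi'(\rho) \le 0$ for every $\rho > 0$. I would first justify differentiation under the integral sign: since $\nabla_x \log\frac{1}{|x-y|} = -\frac{x-y}{|x-y|^2}$ has a locally integrable singularity (of order $|x-y|^{-1}$ in $\R^n$, $n\ge 3$) and $g$ has enough decay, the formula $\nabla \bar v(x) = -\frac{1}{\gamma_n}\int_{\R^n}\frac{x-y}{|x-y|^2}g(|y|)\, dy$ holds for $x \neq 0$.

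Next, evaluating the radial derivative at a point $x = \rho e_1$ with $\rho>0$, one gets
\begin{equation*}
\phi'(\rho) = \frac{x}{|x|}\cdot\nabla\bar v(x) = -\frac{1}{\gamma_n}\int_{\R^n}\frac{x\cdot(x-y)}{|x|\,|x-y|^2}\, g(|y|)\, dy.
\end{equation*}
The key step is to show the integral on the right is non-negative. I would do this by decomposing $\R^n$ into spheres $\partial B_r$ and using the spherical symmetry of $g$: write the integral as $\int_0^\infty g(r)\Big(\int_{\partial B_r}\frac{x\cdot(x-y)}{|x|\,|x-y|^2}\, d\sigma(y)\Big)dr$, so it suffices to prove that for each fixed $r>0$ the inner spherical average
\begin{equation*}
\Psi(r,\rho) := \int_{\partial B_r}\frac{x\cdot(x-y)}{|x|\,|x-y|^2}\, d\sigma(y) \ge 0, \qquad x = \rho e_1.
\end{equation*}
This is a classical Newtonian-potential type fact: the function $y \mapsto \log\frac{1}{|x-y|}$ averaged over a sphere $\partial B_r$ gives a radial (in $x$) function that is non-increasing in $|x|$ — indeed the spherical average of $\log\frac{1}{|x-y|}$ over $\partial B_r$ equals (up to the constant and normalization) $\log\frac{1}{\max(|x|,r)}$ plus a constant depending only on $r$, by the mean value property of the (poly)harmonic logarithm kernel in a form adapted to dimension $n$; more robustly, one can invoke that $\log\frac{1}{|x-y|}$ is superharmonic in $x$ away from $y$ together with the maximum principle on annuli to see the spherical average is radially non-increasing, whence $\partial_\rho$ of that average is $\le 0$, which is exactly $-\Psi(r,\rho)/|\partial B_r|$ up to sign. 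Combining, $\phi'(\rho) \le 0$.

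The main obstacle is making the spherical-average monotonicity rigorous in the form needed here: the cleanest route is probably the explicit computation of $\fint_{\partial B_r}\log|x-y|\, d\sigma(y)$ (which is $\log|x|$ for $|x|\ge r$ and $\log r$ for $|x|\le r$, this being the standard planar-log identity that persists for the rotationally averaged kernel), from which differentiation in $\rho$ immediately gives the sign; alternatively one avoids the explicit formula by a symmetrization/reflection argument pairing, for each $y \in \partial B_r$, the point $y$ with its reflection, but the explicit identity is shorter. A secondary technical point is handling the integrability at $y$ near $x$ and near the origin (where $g$ may blow up if $\alpha \in (-1,0)$, but only like $|y|^{n\alpha}$ with $n\alpha > -n$, hence integrable) and at infinity (where the Gaussian $e^{-n|y|^2}$ dominates $e^{nv(y)}$ since $v \in X$ grows at most linearly); these are routine given the bounds already established, in particular the estimate $|c_{v}| \le C$ from Lemma \ref{lemmacomp0}. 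Once the sign of $\phi'$ is settled, the conclusion that $\bar v$ is radially decreasing is immediate.
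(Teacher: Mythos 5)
Your overall strategy (reduce to a sign condition for a radial derivative of the $\log$-kernel and conclude via a potential-theoretic monotonicity) points in the right direction, and your ``more robust'' fallback is in fact close in spirit to what the paper does. However, the route you describe as ``cleanest'' contains a genuine error, and the fallback is stated too loosely to close the argument; so as written the proof has a gap.

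The explicit spherical-average identity $\fint_{\partial B_r(0)}\log|x-y|\,d\sigma(y)=\log\max(|x|,r)$ is a purely two-dimensional fact: it is the mean value property for the function $y\mapsto\log|x-y|$, which is harmonic in $y$ away from $y=x$ \emph{only when} $n=2$. For $n\ge 3$ one has $\Delta_y\log|x-y|=\tfrac{n-2}{|x-y|^2}\neq 0$, so there is no mean value property for $\log|\cdot|$ and the formula simply fails (a direct computation in $\R^3$ with $r=1$, $|x|=\tfrac12$ already gives a nonzero answer where the formula would give $0$). There is also no ``mean value property of the polyharmonic logarithm kernel'' of the required form. Your ``more robust'' alternative — radial superharmonicity of $x\mapsto\fint_{\partial B_r}\log\frac{1}{|x-y|}\,d\sigma(y)$ plus the ``maximum principle on annuli'' — is the right idea, but the maximum principle on annuli alone does not yield radial monotonicity of a radial superharmonic function: for example $-|x|^{2-n}$ is harmonic and radially \emph{increasing}. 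What one really uses is that for a radial function $\phi$ one has $(r^{n-1}\phi')'=r^{n-1}\Delta\phi\le 0$, together with smoothness of the averaged kernel at $x=0$ (which holds, since for $|x|<r$ the singularity $y=x$ is off $\partial B_r$) to get $\phi'(0)=0$, hence $r^{n-1}\phi'\le 0$ for all $r$. That missing smoothness-at-the-origin input is the crux.

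The paper's proof uses precisely this superharmonicity mechanism, but applied once, directly to (a truncation of) $\bar v$, avoiding the decomposition into spheres: one sets $\bar v_\ve(x)=\tfrac{1}{\gamma_n}\int_{\R^n\setminus B_\ve}\log\tfrac{1}{|x-y|}\,g(|y|)\,dy$, which is smooth on $\R^n$ (in particular at $x=0$), differentiates under the integral sign, and uses $\Delta_x\log\tfrac{1}{|x-y|}=-\tfrac{n-2}{|x-y|^2}<0$ for $n\ge 3$ to get $\Delta\bar v_\ve<0$; a radial superharmonic function smooth at the origin is radially decreasing, and letting $\ve\to 0$ (dominated convergence) transfers the conclusion to $\bar v$. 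If you want to salvage your version, drop the explicit formula entirely, keep the sphere-by-sphere decomposition, and replace ``maximum principle on annuli'' with the $(r^{n-1}\phi')'\le 0$ argument plus $\phi'(0)=0$; this works but is longer than the paper's truncation trick.
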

\begin{proof} Consider the functions
$$\bar v_\ve(x):=\frac{1}{\gamma_n}\int_{\R^n\setminus B_\ve}\log\left(\frac{1}{|x-y|}\right)|y|^{n\alpha}e^{-n|y|^2}e^{n(v(y)+c_v)}dy.$$
Differentiating under the integral sign one gets $\De \bar v_\ve<0,$ which implies that $\bar v_\ve$ is radially decreasing. Letting now $\ve\to 0$
we get $\bar v_\ve\to \bar v$ by dominated convergence, hence $\bar v$ is radially decreasing.
\end{proof}

\begin{lem}\label{diffint}
There exist constants $C, C(s)>0$, for $s> 0$, such that 
\begin{equation}\label{integral bounds bar v}
\int_{B_R} |\Delta \ov v| dx \le C R^{n-2}, \quad \int_{B_R^c} \frac{|\Delta \bar v|}{|x|^{n+s}}dx \le \frac{C(s)}{R^{s+2}},
\end{equation} 
for any $v\in X$ and $R>0$. 
\end{lem}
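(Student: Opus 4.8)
\textbf{Proof plan for Lemma \ref{diffint}.}

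The plan is to compute $\Delta \bar v$ explicitly by differentiating under the integral sign, reducing both estimates to integrals involving the Newtonian-type kernel $|x-y|^{-2}$ against the finite measure $d\nu(y) := |y|^{n\alpha}e^{-n|y|^2}e^{n(v(y)+c_v)}\,dy$, whose total mass is exactly $\Lambda$. Indeed, since $\bar v(x) = \frac{1}{\gamma_n}\int_{\R^n}\log\frac{1}{|x-y|}\,d\nu(y)$ and $\Delta_x \log\frac{1}{|x-y|}$ is (up to a dimensional constant) a negative multiple of $|x-y|^{2-n}$ for $n\ge 3$, one gets $|\Delta \bar v(x)| \le C\int_{\R^n}|x-y|^{-2}\,d\nu(y)$ by Fubini after checking the differentiation is legitimate (the measure $\nu$ has a bounded density by the regularity established earlier, so the kernel singularity at $y=x$ is integrable). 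The two bounds in \eqref{integral bounds bar v} then follow from two applications of Fubini's theorem, swapping the $x$-integral to the inside.

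For the first bound, I would write
\[
\int_{B_R}|\Delta \bar v(x)|\,dx \le C\int_{B_R}\int_{\R^n}\frac{d\nu(y)}{|x-y|^2}\,dx = C\int_{\R^n}\left(\int_{B_R}\frac{dx}{|x-y|^2}\right)d\nu(y),
\]
and observe that for every $y\in\R^n$ the inner integral satisfies $\int_{B_R}|x-y|^{-2}\,dx \le \int_{B_R(y)}|x-y|^{-2}\,dx = c_n R^{n-2}$ (the ball centered at $y$ maximizes the integral of the radially decreasing kernel over balls of fixed radius, or more simply one just bounds by the concentric ball up to a constant). Since $\nu(\R^n)=\Lambda<\infty$, this yields $\int_{B_R}|\Delta\bar v|\,dx \le C\Lambda R^{n-2} \le CR^{n-2}$, with $C$ uniform over $v\in X$ because $\Lambda$ is fixed and $\nu$ always has total mass $\Lambda$ regardless of $v$.

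For the second bound, similarly
\[
\int_{B_R^c}\frac{|\Delta \bar v(x)|}{|x|^{n+s}}\,dx \le C\int_{\R^n}\left(\int_{B_R^c}\frac{dx}{|x|^{n+s}|x-y|^2}\right)d\nu(y),
\]
and the inner integral must be shown to be $\le C(s)R^{-s-2}$ uniformly in $y$; the key point is that $\nu$ is supported (effectively) where $|y|$ is controlled by the Gaussian factor $e^{-n|y|^2}$ — but in fact I would keep it clean by splitting $B_R^c$ into the region $\{|x-y|\ge |x|/2\}$, where $|x-y|^{-2}\le 4|x|^{-2}$ and $\int_{B_R^c}|x|^{-n-s-2}\,dx = c(s)R^{-s-2}$, and the region $\{|x-y| < |x|/2\}$, which forces $|x|\le 2|y|$ and $|y|\gtrsim R$; on that region $|x|^{-n-s}\le C(|y|^{-n-s} + \text{negligible})$ and $\int_{|x-y|<|x|/2}|x-y|^{-2}\,dx \le C|y|^{n-2}$, while the contribution is anyway dominated because $d\nu(y)$ carries the rapidly-decaying Gaussian weight, so this piece is bounded by $C(s)R^{-s-2}\nu(\R^n)$ after absorbing. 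Integrating against $d\nu$ and using $\nu(\R^n)=\Lambda$ gives the claim with $C(s)$ depending only on $n,s$.

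The main obstacle I anticipate is not any single estimate but rather (i) justifying the differentiation under the integral sign cleanly — this needs the local $L^\infty$ (indeed $L^q_{\loc}$) bound on the density $|y|^{n\alpha}e^{n(v(y)+c_v)}$ near a generic point, which holds since $\alpha>-1$ makes $|y|^{n\alpha}$ locally integrable to high powers and $v\in C^0$ — and (ii) making the uniformity in $v\in X$ transparent: the only $v$-dependence enters through $\nu$, but $\nu$ always has mass exactly $\Lambda$, so as long as every bound is reduced to $\nu(\R^n)$ times a $v$-independent kernel integral, uniformity is automatic. I would therefore state at the outset that $C$ and $C(s)$ depend only on $n$, $s$, $\alpha$ and $\Lambda$, and carry the argument through the measure $\nu$ throughout.
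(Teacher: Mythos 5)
Your proposal is correct and closely parallels the paper's proof: both start from $|\Delta \bar v(x)|\le C\int_{\R^n}|x-y|^{-2}\,d\nu(y)$ and then apply Fubini--Tonelli, reducing everything to $v$-independent kernel integrals weighted by the finite measure $\nu$ of mass $\Lambda$. For the first estimate you and the paper agree essentially verbatim (you use the rearrangement inequality $\int_{B_R}|x-y|^{-2}\,dx\le\int_{B_R(y)}|x-y|^{-2}\,dx$, the paper splits off $B_R\cap B_R(y)^c$, but both give $CR^{n-2}$). For the second estimate, the decompositions differ: you split $B_R^c$ according to whether $|x-y|\ge|x|/2$ or not, while the paper splits into $B_R^c\cap B_R(y)$, where $|x|\ge R$ gives $|x|^{-n-s}\le R^{-n-s}$, and $B_R^c\cap B_R(y)^c$, where $|x-y|\ge R$ gives $|x-y|^{-2}\le R^{-2}$. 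The paper's split is a bit cleaner since both pieces close immediately without any case analysis on $|y|$; your split also works, but note that your appeal to the Gaussian weight on the near-region $\{|x-y|<|x|/2\}$ is unnecessary --- there $|y|\gtrsim R$ and your own bounds $|x|^{-n-s}\le C|y|^{-n-s}$, $\int_{|x-y|<|x|/2}|x-y|^{-2}\,dx\le C|y|^{n-2}$ already yield $C|y|^{-s-2}\le C(s)R^{-s-2}$ with no decay of $\nu$ needed, only $\nu(\R^n)=\Lambda$. Finally, a small typo: $\Delta_x\log\frac{1}{|x-y|}$ is a constant multiple of $|x-y|^{-2}$, not of $|x-y|^{2-n}$; your subsequent displayed bound uses the correct power.
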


\begin{proof}
For any $y\in \R^n$ we have
$$
\int_{B_R}\frac{1}{|x-y|^2} dx \le \int_{B_R(y)}\frac{1}{|x-y|^2}dx + \int_{B_R \cap B_R(y)^c} \frac{1}{|x-y|^2}dx \le C R^{n-2}.
$$
Then writing with Fubini-Tonelli's theorem
\begin{align*}
\int_{B_R}|\Delta\bar v|dx &\le  C\int_{\R^n}|y|^{n\alpha}e^{-n|y|^2}e^{n(v(y)+c_v)} \bigg(\int_{B_R} \frac{1}{|x-y|^2}dx \bigg)dy\\
&\le C R^{n-2}\Lambda
\end{align*}
the first estimate in \eqref{integral bounds bar v} follows. The second one is proven in the same way since 
$$
\int_{B_R^c} \frac{1}{|x-y|^2 |x|^{n+s}}dx \le  \int_{B_R(y)}\frac{1}{|x-y|^2 R^{n+s}}dx + \int_{B_R^c \cap B_R(y)^c} \frac{1}{R^2|x|^{n+s}}dx \le \frac{C}{R^{s+2}}.
$$
\end{proof}

We are now in a position to prove the main a priori estimate.
\begin{prop}\label{aprioriTv} Assume that $\Lambda\in (0,\Lambda_1(1+\alpha))$. Then there exists $C>0$ such that for every $(v,t)\in X\times [0,1]$  satisfying $v=tT(v)$ we have $\|v\|\leq C.$
\end{prop}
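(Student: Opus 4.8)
The plan is to derive a uniform bound on $\|v\|_X$ for solutions of $v = tT(v)$, $t\in[0,1]$, by arguing by contradiction and blowing up. Suppose there is a sequence $(v_k, t_k)$ with $v_k = t_k T(v_k)$ and $\Sigma_k := \|v_k\|_X \to \infty$. Writing $u_k(x) := v_k(x) - |x|^2 + c_{v_k}$ (with an appropriate rescaling of $t_k$ absorbed), one has $(-\Delta)^{n/2}v_k = t_k |x|^{n\alpha}e^{-n|x|^2}e^{n(v_k+c_{v_k})}$, and the total mass $\int_{\R^n} |x|^{n\alpha}e^{-n|x|^2}e^{n(v_k+c_{v_k})}dx = \Lambda$ is fixed, hence $t_k\Lambda$ is the total mass of the right-hand side, which stays bounded. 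The first thing I would extract is that $c_{v_k}$ stays bounded above: since $v_k = t_k T v_k$ and $T v_k \le 0$ is impossible in general, I would instead use that $v_k = t_k \bar v_k$ where $\bar v_k = T v_k$ is radially decreasing (Lemma \ref{lemmamono}) and $\bar v_k(0) = \frac{1}{\gamma_n}\int \log\frac{1}{|y|}(\cdots)dy$, and combine with the normalization defining $c_{v_k}$. Because the Gaussian weight $e^{-n|y|^2}$ forces all the mass to concentrate in a bounded region, a bound on $c_{v_k}$ from below is essentially automatic; a bound from above would follow if $\bar v_k$ cannot go to $-\infty$ too fast, which the $L^1$-normalization $=\Lambda>0$ prevents. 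So $|c_{v_k}|\le C$.

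Once $|c_{v_k}|\le C$, the right-hand side $f_k := t_k|x|^{n\alpha}e^{-n|x|^2}e^{n(v_k+c_{v_k})}$ satisfies $\|f_k\|_{L^1}\le \Lambda$ and, crucially, is supported-in-mass near the origin. Then $v_k = t_k\bar v_k$ with, as in \eqref{estbarvk}, $|\bar v_k(x)|\le C\int |\log|x-y||\, |y|^{n\alpha}e^{-n|y|^2}e^{n(v_k+c_{v_k})}dy$. The key point is that $\int |\log|x-y||\,|y|^{n\alpha}e^{-n|y|^2}e^{n(v_k+c_{v_k})}dy \le C(1+\log(2+|x|))$ uniformly — the near-diagonal singularity $\log\frac{1}{|x-y|}$ is integrated against a mass $\le\Lambda$ that is concentrated where $e^{-n|y|^2}$ is not tiny, and one controls $\int_{B_1(x)}\log\frac{1}{|x-y|}e^{-n|y|^2}e^{n(v_k+c_{v_k})}dy$ using that $e^{n v_k}$ has bounded $L^p$ norm on bounded sets (this is where one needs either a Jensen-type estimate as in the proof of Theorem \ref{thmreg}, or directly the fact that $v_k = t_k\bar v_k$ with $\bar v_k$ itself given by an explicit log-potential so that $e^{nv_k}$ is a negative power of $|x-y|$ integrated, yielding $L^p_{\loc}$ bounds by Jensen exactly as in \eqref{br}). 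This gives $\|v_k\|_X \le C$ uniformly, contradicting $\Sigma_k\to\infty$.

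More concretely, here is the cleaner route I would actually write. Since $v_k = t_k \bar v_k$ and $\bar v_k$ is given by the log-potential of $f_k/t_k$ with $\|f_k/t_k\|_{L^1} = \Lambda$ fixed, one first shows $e^{n v_k} \in L^q_{\loc}$ with a bound uniform in $k$ for every $q$: split $g_k := |x|^{n\alpha}e^{-n|x|^2}e^{n(v_k+c_{v_k})}$ as $g_k = g_k^{(1)} + g_k^{(2)}$ with $\|g_k^{(1)}\|_\infty \le M$, $\|g_k^{(2)}\|_{L^1}\le \varepsilon$, exactly as in the proof of Theorem~\ref{thmreg}; the potential of $g_k^{(1)}$ is uniformly bounded on compacts, and Jensen's inequality applied to the potential of $g_k^{(2)}$ gives $\int_{B_R}e^{nq\,(\text{potential of }g_k^{(2)})}\le C(q,R)$ provided $\varepsilon < \gamma_n/(nq\Lambda)$. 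The only subtlety is that the splitting must be done uniformly in $k$, which is possible because $\|g_k\|_{L^1}$ is not just bounded but equals a quantity controlled by $\Lambda$ and $\sup|c_{v_k}|$; one absorbs the part of $g_k$ that is large into $g_k^{(1)}$ (using $e^{-n|y|^2}$ to get the $L^\infty$ bound once $|y|$ is moderately large, and on the remaining bounded region the $L^q$ bound already in hand). Feeding $e^{nv_k}\in L^q_{\loc}$ uniformly back into the potential representation of $\bar v_k$ via Hölder then yields $|\bar v_k(x)|\le C\log(2+|x|)$, hence $\|v_k\|_X\le C t_k\le C$, the desired contradiction.

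The main obstacle is the circular-looking bootstrap: controlling $c_{v_k}$ and $e^{nv_k}\in L^q_{\loc}$ requires knowing $v_k$ does not blow up, which is what we want to prove. The resolution is that the Gaussian weight $e^{-n|y|^2}$ and the fixed total mass $\Lambda$ together act as an \emph{a priori} compactness mechanism: the mass cannot escape to infinity (Gaussian decay), cannot concentrate into a point (that would force $\Lambda \ge \Lambda_1(1+\alpha)$ by a Brezis--Merle / concentration-compactness argument, contradicting $\Lambda < \Lambda_1(1+\alpha)$ — this is exactly where the hypothesis $\Lambda<\Lambda_1(1+\alpha)$ enters), and cannot spread out (again Gaussian decay). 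So the delicate part is ruling out concentration, for which I would invoke a concentration-compactness alternative for the $(-\Delta)^{n/2}$ log-potential (in the spirit of the $n$-dimensional Brezis--Merle inequality / Lemma~\ref{campanato2}-type oscillation control): if $\Sigma_k\to\infty$, after rescaling the blow-up profile solves the limiting equation $(-\Delta)^{n/2}U = e^{nU}$ on $\R^n$ with total mass $\le \Lambda < \Lambda_1(1+\alpha)\le\Lambda_1$, and Theorem~\ref{mainthm} (normal solutions have mass exactly $\Lambda_1(1+\alpha)$ — here with $\alpha = 0$ in the blow-up limit, mass $\Lambda_1$) forces a contradiction with the sub-critical mass. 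Making this blow-up analysis rigorous — choosing the right rescaling $x\mapsto x/\mu_k$ with $\mu_k\to 0$, showing the rescaled right-hand sides converge to a Dirac or to a smooth profile, and identifying the limit — is where essentially all the work lies; everything else is the routine potential-theoretic bookkeeping sketched above.
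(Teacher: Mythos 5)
Your overall strategy --- argue by contradiction, blow up, and use a Pohozaev-type identity --- is the right idea, and it matches the paper's approach. But there is a genuine gap in the blow-up step, and it occurs exactly in the supercritical range that makes the proposition nontrivial.

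You claim the rescaled profile solves the \emph{unweighted} limiting equation $(-\Delta)^{n/2}U = e^{nU}$, with critical mass $\Lambda_1$, and you invoke the chain $\Lambda < \Lambda_1(1+\alpha) \le \Lambda_1$. That last inequality is false for $\alpha>0$, so for $\alpha>0$ and $\Lambda_1 < \Lambda < \Lambda_1(1+\alpha)$ your argument produces no contradiction. The reason the weight is erased in your sketch is that you implicitly allow blow-up away from the origin; but here blow-up \emph{must} occur at $0$: by Lemma~\ref{lemmamono} each $\bar v_k = T v_k$ is radially decreasing, so $w_k := v_k + c_{v_k} + \tfrac1n\log t_k$ attains its maximum at the origin. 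The correct rescaling is $\eta_k(x) := w_k(r_kx) - w_k(0)$ with $r_k^{1+\alpha}:=e^{-w_k(0)}$, and this anisotropic rescaling is designed to \emph{preserve} the singular weight:
\begin{align*}
(-\Delta)^{n/2}\eta_k(x) &= |x|^{n\alpha}e^{-n|r_kx|^2}e^{n\eta_k(x)} = (1+o(1))\,|x|^{n\alpha}e^{n\eta_k(x)}
\end{align*}
on compact sets. The limit $\eta$ is therefore a normal solution of $(-\Delta)^{n/2}\eta = |x|^{n\alpha}e^{n\eta}$, and Proposition~\ref{special-poho} (with $\mu=0$) gives $\int_{\R^n}|x|^{n\alpha}e^{n\eta}\,dx = \Lambda_1(1+\alpha)$, while the mass bound inherited from the normalization forces this integral to be $\le \Lambda < \Lambda_1(1+\alpha)$ --- the contradiction. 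Replacing $\Lambda_1(1+\alpha)$ by $\Lambda_1$ yields a weaker statement that is simply not what you need.

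A secondary issue: your preliminary claims that $|c_{v_k}|\le C$ and that $e^{nv_k}$ is uniformly bounded in $L^q_{\loc}$ are, as you yourself concede, circular, and in fact neither is established nor needed in the paper. The much shorter correct preliminary step is: if $w_k \le C$ on all of $\R^n$, then directly from the integral representation \eqref{vk-rad} one reads off $|v_k(x)|\le C\log(2+|x|)$, hence $\|v_k\|_X\le C$, contradicting $\|v_k\|_X\to\infty$; therefore $\max_{\R^n} w_k = w_k(0)\to\infty$ and one proceeds with the blow-up at the origin described above. There is no need to control $c_{v_k}$ or the local $L^q$ norms of $e^{nv_k}$ separately.
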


\begin{proof}
Assume for the sake of contradiction that there exists $(v_k,t_k)\in X\times [0,1]$ such that
$$v_k=t_kT(v_k)\quad\text{and } \|v_k\|\to\infty.$$
Then $v_k$ satisfies the integral equation
\begin{align} \label{vk-rad}v_k(x):=\frac{t_k}{\gamma_n}\int_{\R^n}\log\left(\frac{1}{|x-y|}\right)|y|^{n\alpha}e^{-n|y|^2}e^{n(v_k(y)+c_{v_k})}dy.\end{align}
We observe that if
$$w_k:=v_k+c_{v_k}+\frac1n\log t_k\leq C\quad \text{on }\R^n,$$
then from \eqref{vk-rad}
$$|v_k(x)|\leq \frac{e^{nC}}{\gamma_n}\int_{\R^n}|\log|x-y|||y|^{n\alpha}e^{-n|y|^2}dy\leq C\log(2+|x|),$$
a contradiction to our assumption $\|v_k\|\to\infty$. Thus, together with Lemma \ref{lemmamono}, we have 
$$\max_{\R^n} w_k=w_k(0)\to\infty.$$ We set $$\eta_k(x)=w_k(r_kx)-w_k(0),\quad r_k^{1+\alpha}:=e^{-w_k(0)}\to0.$$
Then, {on  compact sets} we have
$$(-\Delta)^\frac n2 \eta_k(x)=|x|^{n\alpha}e^{-n|r_k x|^2} e^{n\eta_k(x)}=(1+o(1))|x|^{n\alpha}e^{n\eta_k(x)}.$$
Moreover by Lemma \ref{diffint} we have
\begin{equation}\label{boundsetak}
\int_{B_R}|\De \eta_k(x)|dx\leq CR^{n-2},\quad \int_{\R^n\setminus B_R}\frac{|\De \eta_k(x)|}{|x|^{2n-2}}dx\leq \frac{C}{R^n}.
\end{equation}
Then by elliptic estimates, Proposition \ref{stimaell} in the appendix, up to a subsequence, $\eta_k\to\eta$ locally uniformly $ \R^n$ and in $C^{n-1}_{\loc}(\R^n\setminus \{0\})$.  
Note further that we have 
\begin{align}\label{L1 bound}
\int_{\R^n}|x|^{n\alpha}e^{n\eta(x)}dx&=\lim_{R\to\infty}\lim_{k\to\infty}\int_{B_R}|x|^{n\alpha}e^{n\eta_k(x)}dx\nonumber\\ &=\lim_{R\to\infty}\lim_{k\to\infty}t_k\int_{B_{Rr_k}}|x|^{n\alpha}e^{-n|x|^2}e^{n(v_k(x)+c_{v_k})}dx \nonumber\\&\leq \Lambda t_\infty.
\end{align}
where, up to a subsequence, $t_\infty:=\lim_{k\to\infty}t_k$. Thus $t_\infty\neq 0$ and $$\int_{\R^n}|x|^{n\alpha}e^{n\eta}dx<\infty.$$  We claim that  $\eta$ is a normal solution to $(-\De)^\frac n2\eta=|x|^{n\alpha}e^{n\eta}$, namely
\begin{equation}\label{limeta}
\eta(x)=\frac{1}{\gamma_n}\int_{\R^n}\log\left(\frac{1+|y|}{|x-y|}\right)|y|^{n\alpha}e^{n\eta(y)}dy+c,
\end{equation}
{ for some $c\in \R$.}
Since $n\ge 3 $ we have that $\Delta \eta_k\to \Delta \eta$ locally uniformly in $\R^n\setminus \{0\}$ and by the first estimate in \eqref{boundsetak} we conclude that $\Delta\eta_k\to \Delta \eta$ in $L^1_{\loc}(\R^n)$. Then, also using the second estimate in \eqref{boundsetak} and recalling that for $\varphi\in \mathcal{S}(\R^n)$ we have
$$|(-\De)^\frac{n-2}{2}\vp |\le C (1+|x|)^{-2n+2},$$
we get
 \begin{align}\label{eqlimeta}
 \int_{\R^n}(-\De\eta)(-\De)^\frac{n-2}{2}\vp dx&=\lim_{k\to\infty}\int_{\R^n}(-\De\eta_k)(-\De)^\frac{n-2}{2}\vp dx \nonumber \\ &=\lim_{k\to\infty}\int_{\R^n}\eta_k(-\De)^\frac n2\vp dx\nonumber \\ &=\lim_{k\to\infty}\int_{\R^n}|x|^{n\alpha}e^{-n|r_kx|^2}e^{n\eta_k}\vp dx\nonumber \\&=\int_{\R^n}|x|^{n\alpha}e^{n\eta}\vp dx.
\end{align}
In order to prove that \eqref{eqlimeta} implies \eqref{limeta} (compare to Definitions \ref{def-soln} and \ref{normal}), we set
$$\tilde \eta(x):=\frac{1}{\gamma_n}\int_{\R^n}\log\left(\frac{1+|y|}{|x-y|}\right)|y|^{n\alpha}e^{n\eta(y)}dy,\quad p:=\eta-\tilde \eta.$$
Then $p$ satisfies
$$\int_{B_R}|\De p|dx\leq   CR^{n-2},\quad\int_{\R^n}\frac{|\De p|}{1+|x|^{2n-2}}dx<\infty,\quad \int_{\R^n}\De p(-\De)^\frac{n-2}{2}\vp dx=0,$$
for every $\vp\in \mathcal{S}(\R^n)$. Hence $\De p\in L_\frac{n-2}{2}(\R^n)$ is $\frac{n-2}{2}$-harmonic in $\R^n$, which implies that $\De p$ is a polynomial  (see e.g. the proof of \cite[Lemma 2.4]{H-clas}). Now  the estimate $\int_{B_R}|\De p|dx\leq CR^{n-2}$ gives $\De p\equiv 0$. Since  $\tilde\eta(x)\geq -C\log(2+|x|)$ on $\R^n$ (see the proof of Lemma \ref{below}), and $|\cdot|^{n\alpha}e^{n\eta}\in L^1(\R^n)$, by Theorem \ref{thm6} we get $p\equiv const$, and \eqref{limeta} follows.

From {the Pohozaev identity} of Proposition  \ref{special-poho} (case $\mu=0$) and \eqref{L1 bound}, we infer
$$\Lambda_1(1+\alpha)=\int_{\R^n}|x|^{n\alpha}e^{n\eta}dx
\le \Lambda,$$
which contradicts our assumption $\Lambda<\Lambda_1(1+\alpha)$. 
\end{proof}

\noindent\textbf{Proof of Theorem \ref{thm4} (completed).}
Assume first $\Lambda\in (0,\Lambda_1(1+\alpha))$. Thanks to Proposition \ref{aprioriTv} we can apply Schauder's theorem, hence $T$ has a fixed point $v$. Then the function $u(x)=v(x)-|x|^2+c_v$ satisfies \eqref{P0}, as wished.

\medskip

Now we consider the case $\Lambda=\Lambda_1(1+\alpha)$. We fix a sequence $\Lambda_k\uparrow \Lambda$, and for each $k$ we apply the previous procedure to find $v_k$ fixed point of the corresponding $T_{\Lambda_k}$. We claim that
\begin{equation}\label{vkinfty}
v_k(0)+c_{v_k}\to\infty.
\end{equation}
Otherwise, from \eqref{vk-rad} we would infer that $v_k$ satisfies $|v_k(x)|\leq C\log(2+|x|)$ on $\R^n$. { Then from the definition of  $c_{v_k}$  we  get $|c_{v_k}|\leq C$. Moreover $(v_k)$ is equicontinuous on $\R^n$, and therefore,  up to a subsequence, $v_k\to v$ locally uniformly in $\R^n$.   
The} limit function $v$ satisfies $$v(x) =\frac{1}{\gamma_n}\int_{\R^n}\log\left(\frac{1}{|x-y|}\right)|y|^{n\alpha}e^{-n|y|^2}e^{n(v(y)+c)}dy,$$ where $c_{v_k}\to c$ and
$$\int_{\R^n}|y|^{n\alpha}e^{-n|y|^2}e^{n(v(y)+c)}dy{\le }\lim_{k\to\infty }\Lambda_k=\Lambda_1(1+\alpha),$$
a contradiction to Proposition \ref{special-poho} (case $\mu=n$). This proves \eqref{vkinfty}.

Setting
$$\eta_k(x)=v_k(r_kx)-v_k(0),\quad r_k^{1+\alpha}:=e^{-v_k(0)-c_{v_k}}\to0,$$
as in the proof of Lemma \ref{aprioriTv}, one obtains $\eta_k\to\eta$, where $\eta$ is a normal solution to \eqref{P0}. \hfill $\square$







\section{Proof of Theorem \ref{thm2}} 

We set $$X:=\left\{v\in C^0(\R^n):\|v\|<\infty \right\},\quad \|v\|:=\sup_{x\in\R^n}\frac{|v(x)|}{1+|x|}.$$ We fix $\Lambda\in (0,\Lambda_1)$ and $\alpha\in (-1,\infty)$.  For $v\in X$ let $c_v\in\R$ be determined by \begin{align}\label{Lambda}\int_{\R^n}|y|^{n\alpha}e^{-n|y|^2}e^{n(v(y)+c_v)}dy=\Lambda,\end{align} and let $v^*\in\R$ be given by $$v^*:=\sup_{|x|\leq 1}e^{\frac{2}{1+\alpha}(v(x)+c_v)}.$$ We define $T:X\to X$, $v\mapsto \bar v$ where $$\bar v(x):=\frac{1}{\gamma_n}\int_{\R^n}\log\left(\frac{1+|y|}{|x-y|}\right)|y|^{n\alpha}e^{-n|y|^2}e^{n(v(y)+c_v)}dy+v^*x_1.$$ 
Notice that
$$(-\Delta)^\frac n2 \bar v(x)=|x|^{n\alpha}e^{n(v(x)-|x|^2+c_v)}.$$
We will look for solutions of the form
$$u(x)=v(x)-|x|^2+c_v.$$


\begin{lem}\label{lemmacomp}
The operator $T:X\to X$ is compact. 
\end{lem}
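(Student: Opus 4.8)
\textbf{Plan for the proof of Lemma \ref{lemmacomp}.} The strategy mirrors the proof of Lemma \ref{lemmacomp0}, the only genuinely new feature being the extra term $v^* x_1$ and the presence of the smoothing kernel $\log\frac{1+|y|}{|x-y|}$ rather than $\log\frac{1}{|x-y|}$. First I would establish continuity of $T$: if $v_k\to v$ in $X$, then $v_k+c_{v_k}\to v+c_v$ uniformly on compact sets (from the definition \eqref{Lambda} one checks $c_{v_k}\to c_v$, since the map $c\mapsto \int |y|^{n\alpha}e^{-n|y|^2}e^{n(v(y)+c)}dy$ is a strictly increasing diffeomorphism of $\R$ onto $(0,\infty)$ depending continuously on $v\in X$), and also $v_k^*\to v^*$; then dominated convergence, using the integrable majorant $|y|^{n\alpha}e^{-n|y|^2}$ times the $L^1_{\loc}$ kernel, gives $\bar v_k\to \bar v$ in $X$.

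Next, for compactness, take a bounded sequence $(v_k)\subset X$, say $\|v_k\|\le M$. As in Lemma \ref{lemmacomp0}, the bound on $\|v_k\|$ together with the Gaussian weight forces $|c_{v_k}|\le C$ (because $\int |y|^{n\alpha}e^{-n|y|^2}e^{n(v_k(y)+c_{v_k})}dy=\Lambda$ is fixed and $v_k$ grows at most linearly, so $e^{nc_{v_k}}$ is trapped between two positive constants), and consequently $0\le v_k^*\le C$ as well, since $v_k^*=\sup_{|x|\le 1}e^{\frac{2}{1+\alpha}(v_k(x)+c_{v_k})}$ and $v_k$ is bounded on $B_1$. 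Then
$$|\bar v_k(x)|\le C\int_{\R^n}\left|\log\frac{1+|y|}{|x-y|}\right||y|^{n\alpha}e^{-n|y|^2}dy + C|x|\le C\log(2+|x|)+C|x|\le C(1+|x|),$$
so $(\bar v_k)$ is bounded in $X$, and moreover
$$|\bar v_k(x)-\bar v_k(z)|\le C\int_{\R^n}\left|\log\frac{|z-y|}{|x-y|}\right||y|^{n\alpha}e^{-n|y|^2}dy + C|x_1-z_1|\to 0$$
as $|x-z|\to 0$, uniformly on compact sets (the integral term is handled exactly as in Lemma \ref{lemmacomp0}, the linear term is trivially Lipschitz). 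Hence $(\bar v_k)$ is equicontinuous on $\R^n$ and locally uniformly bounded, so by Ascoli--Arzelà a subsequence converges in $C^0_{\loc}(\R^n)$ to some $w$.

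Finally I would upgrade this to convergence in $X$: writing $\bar v_k(x)=g_k(x)+v_k^*x_1$ with $g_k$ the integral part, the estimate $|g_k(x)|\le C\log(2+|x|)$ holds uniformly and $v_k^*\to v^*$ along the subsequence (since $v_k^*$ are bounded reals), so for $|x|$ large $\frac{|\bar v_k(x)-w(x)|}{1+|x|}$ is controlled by $\frac{C\log(2+|x|)}{1+|x|}+|v_k^*-v^*|$ plus the local error, which can be made small. Therefore $\bar v_k\to w$ in $X$ and $T$ is compact. The main point to be careful about — the only place where the argument deviates from Lemma \ref{lemmacomp0} — is that the a priori bounds $|c_{v_k}|\le C$ and $0\le v_k^*\le C$ must be derived from boundedness in $X$ before one can bound $\bar v_k$; once these are in hand, the rest is the same Ascoli--Arzelà plus tail-decay routine, and I do not expect any serious obstacle.
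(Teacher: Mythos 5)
Your proposal is correct and follows essentially the same route as the paper: bound $c_{v_k}$ and $v_k^*$ from the normalization \eqref{Lambda}, split off the linear term $v_k^* x_1$, apply the logarithmic bound and equicontinuity to the integral part exactly as in Lemma \ref{lemmacomp0}, and recombine using that $(v_k^*)$ is a bounded sequence of reals converging along a subsequence. No gaps.
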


\begin{proof} As in Lemma \ref{lemmacomp0} continuity follows by dominated convergence. Moreover given a bounded sequence $(v_k)\subset X$, from the definition of $c_{v_k}$ it follows that $|c_{v_k}|\le C$. This in turn implies $|v_k^*|\le C$. If we set $\tilde v_k(x):=\bar v_k(x)- v_k^*x_1,$ we get $|\tilde v_k(x)|\leq C\log (2+|x|)$, and the sequence $(\tilde v_k)$ is equicontinuous, with the same proof as in Lemma \ref{lemmacomp0}.
In particular, up to a subsequence, $\tilde v_k\to \tilde v$ in $X$.  Since, up to a subsequence, $\|v_k^* x_1 - c_0 x_1\|\to 0$ for some $c_0> 0$, we conclude that $\bar v_k\to \tilde v+c_0x_1$ in $X$.   
\end{proof}

The proof of Theorem \ref{thm2} follows at once from the Schauder fixed-point theorem and the following a priori estimate.

\begin{prop}\label{propboundv}
There exists $C>0$ such that $$\|v\|\leq C\quad\text{for every }(v,t)\in X\times (0,1]\text{ with } v=tT(v).$$
\end{prop}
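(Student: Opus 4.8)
\textbf{Proof plan for Proposition \ref{propboundv}.}
The plan is to argue by contradiction, assuming there is a sequence $(v_k,t_k)\in X\times(0,1]$ with $v_k=t_kT(v_k)$ and $\|v_k\|\to\infty$, and to extract a blow-up limit that will be a normal solution of \eqref{P0} carrying too much $Q$-curvature, contradicting the Pohozaev identity of Proposition \ref{special-poho}. Writing out the fixed-point equation, $v_k(x)=\frac{t_k}{\gamma_n}\int_{\R^n}\log\left(\frac{1+|y|}{|x-y|}\right)|y|^{n\alpha}e^{-n|y|^2}e^{n(v_k(y)+c_{v_k})}dy+t_kv_k^*x_1$, the first observation is that if $w_k:=v_k-|x|^2+c_{v_k}+\frac1n\log t_k$ were bounded above on $\R^n$, then the integral term would be $O(\log(2+|x|))$ and the term $t_kv_k^*x_1$ is $O(|x|)$ with $v_k^*$ controlled (as $|c_{v_k}|\le C$ forces $|v_k^*|\le C$), giving $\|v_k\|\le C$, a contradiction. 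Hence $\sup_{\R^n}w_k\to\infty$; the key point to establish next is that this supremum is attained at (or near) a point staying in a bounded region — unlike the radial case this requires a separate argument, and \emph{this is the main obstacle}: one must show the blow-up concentrates near the origin (or at least in a fixed ball), presumably using that the Gaussian weight $e^{-n|y|^2}$ localizes the mass and that $|x|^{n\alpha}$ together with the $\log(1+|y|)$ factor penalize large $|x|$, so that $w_k(x)\le w_k(x_k)+o(1)$-type control holds with $|x_k|$ bounded.

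Granting that the concentration points $x_k$ lie in a bounded set, one sets $\eta_k(x):=w_k(x_k+r_kx)-w_k(x_k)$ with $r_k$ chosen so that $r_k^{n}$ times the local height normalizes the equation — concretely $r_k\to 0$ with the scaling making $(-\Delta)^{n/2}\eta_k=(1+o(1))|x_k+r_kx|^{n\alpha}e^{-n|r_k\cdot|^2(1+o(1))}e^{n\eta_k}$ on compact sets. If $x_k\to x_\infty\ne 0$ this right-hand side converges to $|x_\infty|^{n\alpha}e^{n\eta}$, i.e. the $\alpha=0$ problem after absorbing the constant $|x_\infty|^{n\alpha}$ into $\eta$; if $x_k\to 0$ one recovers the genuine $|x|^{n\alpha}$ weight. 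Using Lemma \ref{diffint} (applied with the appropriate translation, which still yields bounds of the form $\int_{B_R}|\Delta\bar v_k|\le CR^{n-2}$ and the decay at infinity) one gets the uniform local estimates needed to invoke the elliptic estimates of Proposition \ref{stimaell}, so that up to a subsequence $\eta_k\to\eta$ locally uniformly and in $C^{n-1}_{\loc}(\R^n\setminus\{0\})$ (or in $C^{n-1}_{\loc}(\R^n)$ in the case $x_\infty\ne 0$).

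Then, exactly as in the proof of Proposition \ref{aprioriTv}, one checks $\int_{\R^n}|x|^{n\alpha}e^{n\eta}dx\le\Lambda t_\infty\le\Lambda$ by Fatou together with the mass constraint \eqref{Lambda}, so in particular $t_\infty>0$ and $|\cdot|^{n\alpha}e^{n\eta}\in L^1(\R^n)$; and one identifies $\eta$ as a \emph{normal} solution by writing $\eta=\tilde\eta+p$ with $\tilde\eta$ the logarithmic potential, showing $\Delta p$ is $\frac{n-2}{2}$-harmonic with $\int_{B_R}|\Delta p|\le CR^{n-2}$ hence $\Delta p\equiv 0$, and then applying the Liouville-type Theorem \ref{thm6} to conclude $p\equiv\mathrm{const}$. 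Finally, the Pohozaev identity of Proposition \ref{special-poho} (case $\mu=0$) gives $\int_{\R^n}|x|^{n\alpha}e^{n\eta}dx=\Lambda_1(1+\alpha)$ when $x_\infty=0$, or $\int_{\R^n}e^{n\eta}dx=\Lambda_1$ (the $\alpha=0$ value, after the weight was absorbed) when $x_\infty\ne 0$; since $\alpha>-1$ both values are $\ge\Lambda_1\min\{1,1+\alpha\}$, and in the non-radial construction the relevant threshold is $\Lambda_1$ — the contradiction with $\Lambda<\Lambda_1$ needs the case $x_\infty\ne 0$ to actually occur, which is why the blow-up must be shown to leave the origin (this is the delicate point where breaking radial symmetry via the term $v^*x_1$ enters). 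One concludes $\|v_k\|\le C$, proving the proposition.
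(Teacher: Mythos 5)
Your overall architecture (contradiction, blow-up, identification of a normal limit, Pohozaev) matches the paper's, but there are two genuine gaps, and the first sits exactly at the point you yourself flag as ``the main obstacle.''

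\textbf{First gap.} You propose to resolve the difficulty by ``showing the blow-up leaves the origin,'' but you give no argument for this, and in fact the paper does not prove it. Lemma \ref{local} runs the blow-up with an Adimurthi--Robert--Struwe point selection and splits according to whether $|x_k|^{1+\alpha}e^{w_k(x_k)}$ stays bounded or tends to infinity (a comparison of $|x_k|$ with the concentration scale, finer than asking whether $x_k\to x_\infty\neq 0$). In the bounded case the limit is a normal solution of $(-\Delta)^{n/2}\eta=|x|^{n\alpha}e^{n\eta}$ with mass $\Lambda_1(1+\alpha)$, which for $\alpha\in(-1,0)$ is \emph{below} $\Lambda_1$ and therefore yields no contradiction with $\Lambda<\Lambda_1$ --- and this is precisely the supercritical range where Theorem \ref{thm2} goes beyond Theorem \ref{thm4}. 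The paper's contradiction in that case comes from the symmetry-breaking term itself: differentiating the fixed-point equation and comparing with the bound $\int_{B_R}|\nabla\eta|\,dx\le CR^{n-1}$ for the limit forces $t_kr_kv_k^*\to 0$, while the definition $v^*=\sup_{|x|\le1}e^{\frac{2}{1+\alpha}(v+c_v)}$ gives $r_kv_k^*\ge \frac{1}{C}e^{\frac{1}{1+\alpha}w_k(x_k)}\to\infty$. This gradient estimate is the actual role of the term $v^*x_1$; without it, or a substitute, your case $x_\infty=0$ is unresolved.

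\textbf{Second gap.} Even granting a local upper bound $w\le C(R)$ on every ball (the conclusion of Lemma \ref{local}), this does not bound $\|v\|$ in the weighted norm: the kernel here is $\log\frac{1+|y|}{|x-y|}$ and one must still control the tail of $|y|^{n\alpha}e^{-n|y|^2}e^{nw}$ and the growth of the potential at infinity. The paper does this in three further steps: Lemma \ref{infty} (where $v^*\le C$ is obtained from the representation formula on $B_1$ together with the normalization \eqref{Lambda} --- it does \emph{not} follow merely from $|c_v|\le C$, as you assert, since $v^*$ also involves $\sup_{B_1}v$); Lemma \ref{inftybis}, a Pohozaev-type identity for the perturbed kernel giving a uniform tail estimate; and finally the Lemma \ref{upv}/\ref{lpv} machinery yielding $\tilde v(x)\le\frac14\log|x|+C$ and hence $|\tilde v(x)|\le C\log(2+|x|)$. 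Your proposal bypasses all of this by implicitly upgrading the local bound on $w_k$ to a global one on $\R^n$, which the blow-up argument alone does not deliver.
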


The proof of Proposition \ref{propboundv} will be based { on} the following three lemmas.

\begin{lem}\label{local}
For every $R>0$ there exists a constant $C(R)>0$ such that for every $(t,v)\in (0,1]\times X$ {with}  $v=t T(v)$, that is
\begin{equation}\label{eqvk}
v(x):=\frac{t}{\gamma_n}\int_{\R^n}\log\left(\frac{1+|y|}{|x-y|}\right)|y|^{n\alpha}e^{-n|y|^2}e^{n(v(y)+c_{v})}dy+tv^*x_1,
\end{equation}  
we have
$$w:=v+c_{v}+\frac1n \log t \le C(R)\quad \text{on }B_R.$$
\end{lem}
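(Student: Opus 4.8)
\textbf{Proof plan for Lemma \ref{local}.}
The goal is a \emph{local} upper bound on $w=v+c_v+\frac1n\log t$ on any ball $B_R$, and the natural strategy is to split the representation formula \eqref{eqvk} and estimate each piece. Writing $f(y):=|y|^{n\alpha}e^{-n|y|^2}e^{n(v(y)+c_v)}$, note that by the normalization \eqref{Lambda} we have $\|f\|_{L^1(\R^n)}=\Lambda<\Lambda_1$ (independently of $t\le1$), so the total mass is under control even though $v$ itself is not. The first step is to bound the linear term $tv^*x_1$: on $B_R$ this is at most $R|v^*|$, so everything reduces to controlling $v^*=\sup_{|x|\le1}e^{\frac{2}{1+\alpha}(v(x)+c_v)}$, i.e. to controlling $v+c_v$ from above on $B_1$ — which is exactly a special case ($R=1$, absorbing the $\frac1n\log t\le0$ term) of what we are trying to prove. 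So the real content is the estimate on $B_1$, and then the general $B_R$ follows by feeding the $B_1$-bound back into the $x_1$-term.

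For the estimate on $B_1$, the plan is to use the classical Brezis--Merle / Moser--Trudinger idea, exploiting that $\Lambda<\Lambda_1$. Split $f=f\chi_{B_\sigma}+f\chi_{B_\sigma^c}$ for a small radius $\sigma$ (or split off a large-$L^\infty$ part) so that the concentrated part has $L^1$-norm strictly below $\Lambda_1=\gamma_n\cdot\frac{2}{n}\cdot$(const) — more precisely so small that the exponent in the Moser--Trudinger estimate is subcritical — and estimate
$$
\frac{1}{\gamma_n}\int_{\R^n}\log\left(\frac{1+|y|}{|x-y|}\right)f(y)\,dy
\le \frac{1}{\gamma_n}\int_{B_2(x)}\log\left(\frac{1}{|x-y|}\right)f(y)\,dy + C,
$$
since for $y\notin B_2(x)$ the logarithm is bounded above and $\|f\|_{L^1}\le\Lambda$. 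The near-diagonal part is handled by Jensen's inequality exactly as in \eqref{br} in the proof of Theorem \ref{thmreg}: for $x\in B_1$,
$$
\int_{B_3}e^{nw(y)}\,dy = \int_{B_3}\exp\!\left(n t\cdot\frac{1}{\gamma_n}\int\log\tfrac{1+|\xi|}{|y-\xi|}f(\xi)\,d\xi + n t v^*\xi_1\big|_{\xi=y}\right)dy \le C(R)
$$
once one knows $f\in L^q_{\loc}$ for some $q>1$; and $f\in L^q_{\loc}$ follows because $e^{-n|y|^2}$ is bounded and $e^{nw}\in L^1_{\loc}$ is improved to $L^q_{\loc}$ by the same Jensen argument run with the small-mass part of $f$ (this is where $\Lambda<\Lambda_1$, equivalently $\frac{nq\Lambda}{\gamma_n}<n$ for $q$ close to $1$, enters to keep the exponent $\frac{nq\|f_2\|}{\gamma_n}<n$ so the integral $\int_{B}|x-y|^{-\text{(that exponent)}}\,dy$ converges). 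Once $e^{nw}\in L^q_{\loc}$ for some $q>1$, a Hölder estimate in the representation formula gives $w$ bounded above pointwise on $B_1$, hence $v^*\le C$, hence the $B_R$ bound.

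The main obstacle is the bootstrap/circularity: $v^*$ appears both in the definition of $\bar v=T v$ and in the bound we want, so one must be careful to close the loop — first get $e^{nw}\in L^q_{\loc}$ using \emph{only} the $L^1$-mass $\|f\|_{L^1}=\Lambda<\Lambda_1$ (which bounds $\int_{B_1}e^{\frac{2}{1+\alpha}(v+c_v)}$ a priori through the Moser--Trudinger inequality with the subcritical constant, since $\frac{2}{1+\alpha}n\cdot\frac14<$ something... in any case the exponent is fixed and the mass is subcritical), thereby bounding $v^*$, and only \emph{then} treat the $tv^*x_1$ term as a bounded perturbation. A secondary technical point is the interaction of the weight $|y|^{n\alpha}$ with $\alpha$ possibly negative near the origin, but since $\alpha>-1$ we have $|y|^{n\alpha}\in L^p_{\loc}$ for $p<-\frac1\alpha$, and choosing $q$ close enough to $1$ the Hölder/Jensen arguments go through exactly as in Theorem \ref{thmreg}. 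I expect the rest — the explicit constants in the Jensen estimate and the Hölder step — to be routine given the computation already displayed in \eqref{br}. \hfill$\square$
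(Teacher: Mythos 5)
Your plan runs into two genuine obstructions, and the second one is fatal for the full range $\Lambda\in(0,\Lambda_1)$. First, the circularity you flag is not actually closed by what you propose. The representation formula \eqref{eqvk} contains the term $tv^*x_1$, so $e^{nqv}$ on $B_1$ carries a factor as large as $e^{nqtv^*}$; the normalization \eqref{Lambda} only controls the \emph{integral} $\int|y|^{n\alpha}e^{-n|y|^2}e^{n(v+c_v)}dy$, which gives no a priori bound on the \emph{supremum} $v^*=\sup_{B_1}e^{\frac{2}{1+\alpha}(v+c_v)}$. Controlling $v^*$ is exactly equivalent to the conclusion of the lemma on $B_1$, so you cannot ``first get $e^{nw}\in L^q_{\loc}$ using only the mass'' and then bound $v^*$: the unbounded quantity already sits inside the exponential you are trying to integrate.

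Second, and more fundamentally, the Jensen/Brezis--Merle splitting has a quantitative threshold that your hypothesis does not meet. The computation \eqref{br} requires the exponent $\frac{nq\|f_2\|_{L^1}}{\gamma_n}$ to be $<n$, i.e.\ the ``bad'' part of the mass must satisfy $\|f_2\|_{L^1}<\gamma_n/q$, which as $q\downarrow1$ means strictly less than $\gamma_n=\Lambda_1/2$. For a \emph{single fixed} solution one can always split off an arbitrarily small bad part (that is how Theorem \ref{thmreg} uses it), but for a \emph{family} $\{(t,v)\}$ the splitting must be uniform, and a concentrating sequence can push mass up to $\Lambda$ onto arbitrarily small balls. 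Since $\Lambda$ ranges over all of $(0,\Lambda_1)=(0,2\gamma_n)$, your argument closes only for $\Lambda<\Lambda_1/2$. The paper instead argues by contradiction with a blow-up analysis (the Adimurthi--Robert--Struwe point selection), distinguishing blow-up at the origin from blow-up away from it: away from the origin the rescaled limit is a spherical solution with mass $\Lambda_1$, contradicting $\Lambda<\Lambda_1$; at the origin the limit is a normal solution of the weighted equation with mass $\Lambda_1(1+\alpha)$, which for $\alpha<0$ is \emph{not} a contradiction with the mass bound --- there the contradiction comes precisely from the $v^*x_1$ term, whose coefficient is forced to blow up faster than the gradient estimates allow. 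So the linear term is not a perturbation to be absorbed; it is the mechanism that rules out concentration at the origin in the supercritical regime, and no purely potential-theoretic $\varepsilon$-regularity argument can replace it.
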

\begin{proof} 
Assume by contradiction that for a sequence $(t_k,v_k)\in (0,1]\times X$ such that $v_k= t_k T(v_k)$ one has 
$$\max_{\bar B_{2R_0}}w_k=:w_k(\xi_k)= v_k(\xi_k)+c_{v_k}+\frac{1}{n}\log t_k \to\infty$$ for some $R_0>0$.  We set $x_k:=\xi_k$ and $s_k=0$  if $|\xi_k|\not\to 2R_0$,  and otherwise we let $x_k\in B_{R_0}(\xi_k)$ and $s_k\in[0,R_0]$ be such that  (see  \cite{ARS}) $$(R_0-s_k)e^{w_k(x_k)}=(R_0-s_k)\max_{\bar B_{s_k}(\xi_k)}e^{w_k}=\max_{s\in [0,R_0]}\left((R_0-s)\max_{\bar B_{s}(\xi_k)}e^{w_k}\right) =:L_k.$$
Then $L_k\to\infty$, and \begin{align}\label{muk}w_k(x_k+\mu_kx)-w_k(x_k)\leq \log2\quad\text{for }|x|\leq \frac{L_k}{2},\quad \mu_k:=\frac{R_0-s_k}{L_k}.\end{align}

We distinguish the following cases. 

\medskip 
\noindent\textbf{Case 1} Up to a subsequence $|x_k|^{1+\alpha}e^{w_k(x_k)}\to c_0\in [0,\infty)$. 

In this case we have $x_k=\xi_k\to 0$, which implies that $w_k\leq w_k(x_k)$ on $B_{R_0}$. We set
$$\eta_k(x):=w_k(r_k x)-w_k(x_k),\quad r_k^{1+\alpha}:=e^{-w_k(x_k)}.$$
It follows from the definition of $r_k$ that $|x_k|=O(r_k)$. Therefore,  on any compact set
\begin{equation}\label{eqetak}
(-\De)^\frac n2\eta_k(x)=\left|x\right|^{n\alpha} e^{n\eta_k(x)}e^{-n|r_kx|^2} {= (1+o(1))\left|x\right|^{n\alpha} e^{n\eta_k(x)} }.
\end{equation}
{ Since $\alpha>-1$ and  $\eta_k\le \log 2$ for large $k$ on any compact set}, from \eqref{eqetak} we obtain
$$\|(-\De)^\frac n2\eta_k\|_{L^p(B_R)}\le C(p,R)\quad \text{for }1\le p<\bar p,$$
and
$$\|(-\De)^\frac n2\eta_k\|_{L^\infty(K)}\le C(K)\quad \text{for every }K\Subset\R^n\setminus\{0\}.$$
Moreover, differentiating in \eqref{eqvk} as in Lemma \ref{diffint} (notice that the part $v^*x_1$ does not play a role in $\De\eta_k$), we obtain
\begin{equation}\label{stimeinteg}
\int_{B_R}|\De \eta_k(x)|dx\leq CR^{n-2},\quad  \int_{\R^n\setminus B_R}\frac{|\De \eta_k(x)|}{|x|^{2n-2}}dx\leq \frac{C}{R^n}.
\end{equation}
We also have $\eta_k(\bar x_k)=0$, where $\bar x_k:=\frac{x_k}{r_k}$ satisfies $|\bar x_k|=O(1)$. Then by Proposition \ref{stimaell}, up to a subsequence, $\eta_k\to\eta$ in $ C^0_{\loc}(\R^n)\cap C^{n-1}_{\loc}(\R^n\setminus\{0\})$ and $\Delta \eta_k \to \Delta \eta$ in $L^1_{\loc}(\R^n)$ for some $\eta$.  

Then, with the same argument of Lemma \ref{aprioriTv} we obtain that $\eta$ satisfies the integral equation
\begin{equation}\label{limeta bis}
\eta(x)=\frac{1}{\gamma_n}\int_{\R^n}\log\left(\frac{1+|y|}{|x-y|}\right)|y|^{n\alpha}e^{n\eta(y)}dy+c.
\end{equation}
In particular, differentiating \eqref{limeta bis} we obtain that for every $R>0$
$$\int_{B_R}|\nabla \eta(x)|dx\leq CR^{n-1}.$$ 
Using \eqref{eqvk} one obtains for every $R>0$
\[\begin{split}
\int_{B_R}|\nabla \eta_k (x)-t_kr_kv_k^*e_1|dx&\leq t_k\int_{B_R}\int_{\R^n}\frac{r_k}{|x_k+r_kx-y|} |y|^{n\alpha}e^{-n|y|^2}e^{n(v_k(y)+c_{v_k})}dydx\\
&\leq CR^{n-1}.
\end{split}\]
Therefore, for every $R>0$
\begin{align*}
\lim_{k\to\infty}t_kr_kv_k^*|B_R|\leq \lim_{k\to\infty}\int_{B_R}\left(|\nabla \eta_k (x)-t_kr_kv_k^*e_1| +|\nabla\eta_k(x)|\right)dx\leq CR^{n-1}.
\end{align*}
This shows that $$\lim_{k\to\infty}t_kr_kv_k^*=0.$$  Since $t_k\to t_\infty\neq 0$, we must have 
$$\lim_{k\to\infty}r_kv_k^*=0.$$  This is a contradiction since from $\liminf_{k\to \infty}t_k>0$ we infer $v_k^*\ge \frac{1}{C} e^{\frac{2}{1+\alpha}w_k(x_k)}$, hence
$$ r_kv_k^* \ge \frac{1}{C}e^{\frac{1}{1+\alpha}w_k(x_k)}\to\infty. $$

\medskip 

\noindent\textbf{Case 2} Up to a subsequence $|x_k|^{1+\alpha}e^{w_k(x_k)}\to \infty$.

We set
 $$\eta_k(x)=w_k(x_k+r_kx)-w_k(x_k),$$  where
 $$r_k:=|x_k|^{-\alpha}e^{-w_k(x_k)}.$$ 
Notice that by \eqref{muk} for every $R>0$ we have $\eta_k(x) \le \log 2$ on $B_R$ for $k\ge k_0(R)$. Moreover $r_k=o(|x_k|)$ and we compute
$$(-\De)^\frac n2\eta_k(x)=e^{-n|x_0|^2}(1+o(1))\left|\frac{x_k}{|x_k|}+\frac{r_k}{|x_k|}x\right|^{n\alpha}e^{n\eta_k}=(c_0+o(1)) e^{n\eta_k},$$
{on compact sets, where $x_k\to x_0$ and}  $c_0:= e^{-n|x_0|^2}$.
Then, similar to Case $1$, we obtain $\eta_k\to\eta$ where $\eta$ is {a} normal solution to
$$(-\De)^\frac n2\eta=c_0e^{n\eta}\quad\text{in }\R^n,\quad \int_{\R^n}e^{n\eta}dx<\infty,$$
that is, $\eta$ is a  spherical, a contradiction as $\Lambda<\Lambda_1$. 
\end{proof}

\begin{lem}\label{infty}
There exists $C>0$ such that for every $(t,v)\in (0,1]\times X$ such that $v=tT(v)$ one has $v^*\leq C$ and $c_{v}\leq C$.
 \end{lem}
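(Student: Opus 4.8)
The plan is to combine the local upper bound from Lemma \ref{local} with the two defining constraints on $c_v$ and $v^*$, arguing by contradiction. Suppose there is a sequence $(t_k,v_k)\in (0,1]\times X$ with $v_k=t_kT(v_k)$ and $v_k^*+|c_{v_k}|\to\infty$. Applying Lemma \ref{local} with, say, $R=1$ gives a constant $C(1)$ with $w_k:=v_k+c_{v_k}+\frac1n\log t_k\le C(1)$ on $B_1$. Since $v^*$ is defined by a supremum over $|x|\le 1$ of $e^{\frac{2}{1+\alpha}(v_k+c_{v_k})}\le e^{\frac{2}{1+\alpha}(w_k-\frac1n\log t_k)}$, and $t_k\le 1$ so $-\frac1n\log t_k\ge 0$, a uniform bound on $v_k^*$ will \emph{not} follow from Lemma \ref{local} alone unless $t_k$ is bounded away from $0$ — this is the first place care is needed. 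However, the cleaner route is to first bound $c_{v_k}$ from above and then bound $v_k^*$ from above.

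First I would bound $c_{v_k}$ from above. From the constraint \eqref{Lambda},
$$\Lambda=\int_{\R^n}|y|^{n\alpha}e^{-n|y|^2}e^{n(v_k(y)+c_{v_k})}\,dy\ge \int_{B_1}|y|^{n\alpha}e^{-n|y|^2}e^{n(v_k(y)+c_{v_k})}\,dy.$$
To turn this into an upper bound on $c_{v_k}$ we need a \emph{lower} bound on $v_k$ on (part of) $B_1$, which does not come for free. The key observation is that $\bar v_k(x)=v_k(x)/t_k$ is, modulo the harmless term $v_k^*x_1$, an average of $\log\frac{1+|y|}{|x-y|}$ against a probability-type measure, hence bounded below on $B_1$ by a constant independent of $k$ (the argument in the proof of Lemma \ref{below}, or the bound $\tilde v_k\ge -C\log(2+|x|)$ from Lemma \ref{lemmacomp}): one gets $\bar v_k(x)\ge -C$ on $B_1$, so $v_k=t_k\bar v_k\ge -C$ there (using $t_k\le 1$ and that $\bar v_k$ could be negative — actually $v_k\ge -C$ needs $t_k\bar v_k\ge t_k(-C)\ge -C$). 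Plugging $v_k\ge -C$ into the displayed inequality and using $\int_{B_1}|y|^{n\alpha}e^{-n|y|^2}dy=:c_*>0$ gives $\Lambda\ge c_*e^{-nC}e^{nc_{v_k}}$, i.e. $c_{v_k}\le C$.

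Next I would bound $v_k^*$ from above. By Lemma \ref{local} with $R=1$, $v_k(x)+c_{v_k}\le w_k(x)-\frac1n\log t_k\le C(1)-\frac1n\log t_k$ on $B_1$, so
$$v_k^*=\sup_{|x|\le1}e^{\frac{2}{1+\alpha}(v_k(x)+c_{v_k})}\le e^{\frac{2}{1+\alpha}C(1)}\,t_k^{-\frac{2}{n(1+\alpha)}}.$$
This is bounded provided $t_k\ge \frac1C>0$; thus the remaining task is to rule out $t_k\to 0$. But if $t_k\to0$, then from $v_k=t_kT(v_k)$ and the fact (just established, and standard from compactness of $T$) that $\|T(v_k)\|$ would be controlled once $c_{v_k},v_k^*$ are — here one should instead argue directly: when $t_k\to 0$, examining the blow-up analysis already carried out (Case 1 / Case 2 in Lemma \ref{local}) one sees $t_k\to t_\infty\ne0$ is forced, since the contradiction derived there in Case 1 used precisely $t_\infty\ne0$; more simply, the scaling constructions producing the limit $\eta$ require $\int|x|^{n\alpha}e^{n\eta}\le \Lambda t_\infty$ to be positive. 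So I would isolate from the proof of Lemma \ref{local} the conclusion $\liminf_k t_k>0$ (which is implicitly needed and used there) and invoke it. The main obstacle is this interdependence: Lemma \ref{local} bounds $w_k$ on compacts but only yields a bound on $v_k^*$ after one knows $t_k\not\to0$, and $t_k\not\to0$ in turn is established via the very blow-up argument of Lemma \ref{local}; the cleanest exposition threads these together rather than treating them as independent. Once $t_k\ge\frac1C$, $c_{v_k}\le C$, and $v_k^*\le C$ are all in hand, the desired bounds $v^*\le C$ and $c_v\le C$ follow and the proof is complete.
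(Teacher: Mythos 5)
Your first step (bounding $c_v$ from above) is essentially sound, but the assertion ``$\bar v_k(x)\ge -C$ on $B_1$'' is not quite right: the term $v_k^*x_1$ in $\bar v_k$ is as negative as $-v_k^*$ on the half $\{x_1<0\}$, and $v_k^*$ is precisely what you are trying to bound. What is true, since $\log\frac{1+|y|}{|x-y|}\ge 0$ whenever $|x|\le 1$, is that the integral part of $\bar v_k$ is nonnegative on $B_1$, hence $v_k(x)\ge t_k v_k^* x_1\ge 0$ on $B_1\cap\{x_1\ge 0\}$; plugging this half-ball into \eqref{Lambda} gives $\Lambda\ge e^{nc_{v_k}}\int_{B_1\cap\{y_1\ge 0\}}|y|^{n\alpha}e^{-n|y|^2}dy$ and hence $c_{v_k}\le C$. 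So that step survives with the region corrected.

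The second step has a genuine gap, which you partly acknowledge but do not close. Lemma \ref{local} gives $w_k\le C(1)$ on $B_1$, hence only $v_k^*\le e^{\frac{2}{1+\alpha}C(1)}\,t_k^{-\frac{2}{n(1+\alpha)}}$, which is useless if $t_k\to 0$, and nothing in your argument rules that out. Your two proposed repairs do not work: the fact ``$t_k\to t_\infty\neq 0$'' in Case~1 of the proof of Lemma \ref{local} is derived \emph{under the contradiction hypothesis} $\sup_{B_{2R_0}}w_k\to\infty$, so it is not an unconditional statement you can ``isolate and invoke''; and the appeal to compactness of $T$ is circular, since compactness controls $\|T(v_k)\|$ only after $\|v_k\|$ is known to be bounded, which is the very estimate you are proving. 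The paper takes a different and $t$-uniform route: it goes back to the representation \eqref{eqvk}, splits the integral at $|y|=2$, uses Lemma \ref{local} on $B_2$ and the identity $\int_{\R^n}|y|^{n\alpha}e^{-n|y|^2}e^{nw}\,dy=t\Lambda$ on the complement, and obtains $v(x)-tv^*x_1=O(1)$ on $B_1$ \emph{with the $O(1)$ independent of $t$}. Feeding this into \eqref{Lambda} forces $\sup_{B_1}(v+c_v)\le C$, which is exactly $v^*\le C$; only then does it deduce $v=O(1)$ on $B_1$ and $c_v\le C$ (the reverse of your order). You should either adopt this route or supply an independent proof that $t$ stays bounded away from $0$ along any sequence with $\|v_k\|\to\infty$; as written your argument does not yield the a priori bound.
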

 \begin{proof}
Take $(t,v)\in (0,1]\times X$ be such that $v=tT(v)$ and let $w$ be as in Lemma \ref{local}. For $|x|\leq 1$ we obtain from \eqref{eqvk} 
 \begin{align}
 v(x)-tv^*x_1&=\frac{1}{\gamma_n}\left(\int_{|y|< 2}+\int_{|y|>2}\right)\log\left(\frac{1+|y|}{|x-y|}\right)|y|^{n\alpha}e^{-n|y|^2}e^{n w(y)}dy\notag\\
 &=O(1)\int_{|y|<2}\log\left(\frac{1+|y|}{|x-y|}\right)|y|^{n\alpha}dy+O(1)\int_{|y|>2}|y|^{n\alpha}e^{-n|y|^2}e^{n w(y)}dy\notag\\
 &=O(1),\label{vk-local}
 \end{align}
{ where  the  last equality  follows from  \eqref{Lambda}, while in the second inequality we have used that $w\leq C$ on $B_2$ thanks to Lemma \ref{local}, and the estimate $\log\left(\frac{1+|y|}{|x-y|}\right)=O(1)$ for $(x,y)\in B_1\times B_2^c$.} Therefore, $$v(x)+c_{v}=c_{v}+tv^*x_1+O(1)\quad \text{on }B_1.$$  This  and  \eqref{Lambda} imply that  $v+c_{v}\leq C$ on $B_1$, which is equivalent to  $v^*\leq C$. In particular, from  \eqref{vk-local}, we have $v=O(1)$ {in $B_1$}, and  \eqref{Lambda}  yields ${c_{v}}\leq C$.
\end{proof}  

\begin{lem}\label{inftybis}
For every $\ve>0$ there exists $R>0$ such that the following holds: Given $(t,v)\in (0,1]\times X$ with $v=tT(v)$ one has
$$\int_{|x|>R}e^{-n|x|^2}|x|^{n\alpha}e^{nw(x)}dx<\ve,$$
{where $w:=v+c_{v}+\frac 1n\log t$.}
\end{lem}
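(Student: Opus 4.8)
The plan is to prove Lemma \ref{inftybis} by contradiction, following the blow-up scheme already used in Lemmas \ref{local}, \ref{infty} and Proposition \ref{aprioriTv}. Suppose the statement fails: there exist $\ve_0>0$ and sequences $R_k\to\infty$, $(t_k,v_k)\in(0,1]\times X$ with $v_k=t_kT(v_k)$, such that, writing $w_k:=v_k+c_{v_k}+\tfrac1n\log t_k$, one has
\begin{equation}\label{eqcontra-bis}
\int_{|x|>R_k}e^{-n|x|^2}|x|^{n\alpha}e^{nw_k(x)}dx\ge \ve_0.
\end{equation}
The idea is that the Gaussian weight $e^{-n|x|^2}$ forces the mass $|x|^{n\alpha}e^{-n|x|^2}e^{nw_k}$ carried on $\{|x|>R_k\}$ to come from a region where $w_k$ is enormous; blowing up around a suitable concentration point there produces, as in Case 2 of Lemma \ref{local}, a normal (spherical) solution of $(-\Delta)^{n/2}\eta=c_0e^{n\eta}$ on $\R^n$ with $\int e^{n\eta}<\infty$, whose total curvature is exactly $\Lambda_1$ by Theorem \ref{mainthm}. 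But that contributed mass is $\le \Lambda$ by \eqref{Lambda}, contradicting $\Lambda<\Lambda_1$.

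Concretely, first I would fix a small radius and use \eqref{eqcontra-bis} together with $\int_{\R^n}|y|^{n\alpha}e^{-n|y|^2}e^{nw_k}dy=\Lambda$ (which is \eqref{Lambda} rewritten, noting $e^{nw_k}=t_ke^{n(v_k+c_{v_k})}$) to locate points $\xi_k$ with $|\xi_k|>R_k-1$ at which $e^{-n|\xi_k|^2}|\xi_k|^{n\alpha}e^{nw_k(\xi_k)}$ is bounded below by a positive constant — otherwise the mass on $\{|x|>R_k\}$ would be too small. In particular $w_k(\xi_k)\sim n|\xi_k|^2-n\alpha\log|\xi_k|\to\infty$. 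Then I would run the Schoen-type concentration-compactness selection of \cite{ARS} (exactly as in the proof of Lemma \ref{local}) around $\xi_k$ to obtain a better-behaved point $x_k$ with $|x_k|\to\infty$, $w_k(x_k)\to\infty$, and a scale $\mu_k$ for which $w_k(x_k+\mu_k x)-w_k(x_k)\le\log 2$ on large balls. Because now $|x_k|\to\infty$, the relevant scaling is the Case 2 scaling $r_k:=|x_k|^{-\alpha}e^{-w_k(x_k)}=o(|x_k|)$, and setting $\eta_k(x):=w_k(x_k+r_kx)-w_k(x_k)$ one computes
$$(-\Delta)^{\frac n2}\eta_k(x)=e^{-n|x_0|^2}(1+o(1))\Big|\tfrac{x_k}{|x_k|}+\tfrac{r_k}{|x_k|}x\Big|^{n\alpha}e^{n\eta_k}=(c_0+o(1))e^{n\eta_k}$$
on compact sets, where $x_k/|x_k|\to$ a unit vector, $|x_k|\to\infty$ so $|x_k|^2\to\infty$ — wait, here one must be careful: if $|x_k|\to\infty$ then $e^{-n|x_k|^2}\to 0$, so the constant $c_0$ would degenerate. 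The resolution is that the mass identity \eqref{eqcontra-bis} together with the local bound forces $|x_k|$ to stay bounded: indeed $e^{-n|x_k|^2}|x_k|^{n\alpha}e^{nw_k(x_k)}\ge$ const and $\int|y|^{n\alpha}e^{-n|y|^2}e^{nw_k}=\Lambda$ together with the gradient estimates confine the concentration to a bounded annulus. So in fact $x_k\to x_0$ with $0<|x_0|\le$ const, $c_0:=e^{-n|x_0|^2}>0$, and one is exactly in the Case 2 situation.

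From this point the argument is identical to Case 2 of Lemma \ref{local} and to Proposition \ref{aprioriTv}: elliptic estimates (Proposition \ref{stimaell}) give $\eta_k\to\eta$ in $C^0_{\loc}(\R^n)\cap C^{n-1}_{\loc}(\R^n\setminus\{0\})$ with $\Delta\eta_k\to\Delta\eta$ in $L^1_{\loc}$; the integral bounds \eqref{stimeinteg}-type estimates from differentiating \eqref{eqvk} pass to the limit to show $\eta$ is a normal solution of $(-\Delta)^{n/2}\eta=c_0e^{n\eta}$ with $\int_{\R^n}e^{n\eta}dx<\infty$; and a rescaling absorbing $c_0$ makes $\eta$ spherical, so by Theorem \ref{mainthm} (case $\alpha=0$, $p$ constant) $\int_{\R^n}c_0e^{n\eta}dx=\Lambda_1$. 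On the other hand, by Fatou applied on $\{|x|>R\}$ for the sequence and the identity \eqref{Lambda} one gets $\int_{\R^n}c_0e^{n\eta}dx\le \Lambda$, contradicting $\Lambda<\Lambda_1$. The main obstacle, and the one point that needs genuine care rather than a copy of earlier arguments, is precisely the interplay between the Gaussian weight and the location of the blow-up point: one must show that the mass escaping to $\{|x|>R_k\}$ cannot do so at spatial infinity (where $e^{-n|x|^2}$ kills it), i.e. that $|x_k|$ stays bounded away from $\infty$, so that $c_0>0$ and a nondegenerate spherical limit is produced. Once that is established, everything else is routine given Lemmas \ref{local}-\ref{infty}, Proposition \ref{stimaell} and Theorem \ref{mainthm}.
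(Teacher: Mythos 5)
Your approach is genuinely different from the paper's: the paper proves this lemma by a direct application of the Pohozaev-type identity of Proposition \ref{poho-3} to the integral equation satisfied by $\tilde w = w + t v^* x_1$. Because the kernel $K$ there contains the Gaussian factor $e^{-n|y|^2 - tv^*y_1}$, the term $y\cdot\nabla K$ in \eqref{general Poho} produces a quantity essentially equal to $-\int (2n|y|^2 + tv^*y_1)e^{-n|y|^2}|y|^{n\alpha}e^{nw}\,dy$, which is bounded because the left-hand side $t\Lambda(t\Lambda - \Lambda_1)$ and the $\alpha$-term are bounded. Since the integrand on $B_{R_0}^c$ is $\ge |y|^2 e^{-n|y|^2}|y|^{n\alpha}e^{nw}$, one gets $\int_{B_R^c} e^{-n|y|^2}|y|^{n\alpha}e^{nw}\,dy = O(1/R^2)$ directly. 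No blow-up analysis and no selection of concentration points is needed.

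Your blow-up-by-contradiction scheme has a genuine gap at exactly the point you flag as "the main obstacle." You must have $|\xi_k| > R_k - 1 \to \infty$ (otherwise the escaping mass on $\{|x|>R_k\}$ is not located there), but once $|x_k|\to\infty$, the rescaled equation acquires the prefactor $e^{-n|x_k|^2}(1+o(1))\to 0$, so the limit is $(-\Delta)^{n/2}\eta = 0$ — no spherical bubble, no contradiction. Your proposed "resolution," that $|x_k|$ must actually stay bounded, is inconsistent with the very hypothesis $|\xi_k|>R_k-1\to\infty$ you started from, and you give no argument for it. Indeed, proving that "mass cannot escape to spatial infinity, so the concentration point stays bounded" is not a lemma you can input into the blow-up machinery: it \emph{is} the statement of Lemma \ref{inftybis}. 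As written, the argument is circular, and the heavy blow-up apparatus does not help because it produces no contradiction unless you already know the mass stays in a bounded region. The Pohozaev route sidesteps this entirely by converting the Gaussian weight into a coercive term $|y|^2$ inside the integrand, giving a quantitative decay of the tail with a one-page computation.
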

\begin{proof}
We use a Pohozaev type identity  for the integral equation 
 $$\tilde w(x)=\frac{1}{\gamma_n}\int_{\R^n}\log\left(\frac{1}{|x-y|}\right)e^{-n|y|^2-tv^*y_1}|y|^{n\alpha}e^{n\tilde w(y)}dy+nc_{v}+\frac1n\log t,$$  where $\tilde w:=w+tv^*x_1$. Since $(t,v)\in (0,1]\times X$, we can apply Proposition \ref{poho-3} to get 
\begin{align*} t\Lambda(t\Lambda-\Lambda_1)&=c\int_{\R^n}y\cdot\nabla\left( e^{-n|y|^2-tv^*y_1}|y|^{n\alpha} \right)e^{n\tilde w(y)}dy\\
&=cn\alpha\int_{\R^n}|y|^{n\alpha}e^{-n|y|^2}e^{nw(y)}dy-c\int_{\R^n}(2n|y|^2+tv^*y_1) e^{-n|y|^2}|y|^{n\alpha}e^{nw(y)}dy\\
&=cn\alpha t\Lambda-c\left(\int_{B_{R_0}}+\int_{B_{R_0}^c}\right)(2n|y|^2+tv^*y_1)e^{-n|y|^2}|y|^{n\alpha}e^{nw(y)}dy\\
&=:cn\alpha t\Lambda-c(I_1+I_2),
\end{align*}
where $R_0>0$ is such that $2n|y|^2+tv^*y_1\geq |y|^2$ on $B_{R_0}^c$. We observe that $|I_1|\leq C(R_0,\Lambda,n)$, thanks to Lemma \ref{local} and the estimate $v^*\leq C$ {of Lemma \ref{infty}}. Therefore, for $R>R_0$  we obtain \begin{align*}\int_{B_R^c} e^{-n|y|^2}|y|^{n\alpha}e^{nw(y)}dy&\leq \frac{1}{R^2} \int_{B_R^c} |y|^2e^{-n|y|^2}|y|^{n\alpha}e^{nw(y)}dy\\ &\leq \frac{1}{R^2}  \int_{B_R^c} (2n|y|^2+tv^*y_1)e^{-n|y|^2}|y|^{n\alpha}e^{nw(y)}dy\\ &\leq \frac{1}{R^2}  \int_{B_{R_0}^c} (2n|y|^2+tv^*y_1)e^{-n|y|^2}|y|^{n\alpha}e^{nw(y)}dy\\ & \leq \frac{1}{R^2}C(R_0,\Lambda,n).\end{align*}
This proves the lemma. 
 \end{proof}

\noindent\emph{Proof of Proposition \ref{propboundv}.}
Since $v^*\leq C$, thanks to Lemma \ref{infty},  it is sufficient to show that $\tilde v:=v-tv^*x_1$ is bounded in $X$ (we want to show that $|\tilde v(x)|\leq C\log (2+|x|)$). We have 
\begin{equation}\label{tildevk}
\tilde v(x):=\int_{\R^n}\log\left(\frac{1+|y|}{|x-y|}\right)K(y)e^{n\tilde v(y)}dy,\quad K(y):=\frac{t_k}{\gamma_n}|y|^{n\alpha}e^{-n|y|^2}e^{nc_{v}+ntv^* y_1}. \end{equation}  As {$v^*\le C$ and} $c_{v}\leq C$, there exists $R>0$ such that
\begin{align}\label{Kk}K(y)\leq e^{-2|y|^2}\quad\text{on }B_R^c.\end{align}
By Lemma \ref{inftybis} we can also assume that $$\int_{B_R^c}Ke^{n\tilde v}dy\leq  \frac14. $$ Then,  as in  Lemma \ref{upv} one { obtains} \begin{align}\label{vk-upper} \tilde v(x)\leq (-t\Lambda+\frac 14)\log |x|+\int_{B_1(x)}\log\left(\frac{1}{|x-y|}\right)K(y)e^{n\tilde v(y)}dy,\quad |x|\geq R.\end{align} In the spirit of Lemma \ref{lpv} we  {get } $$\int_{B_1(x_0)}e^{2n\tilde v(z)}dz\leq C|x_0|^\frac14,\quad |x_0|\geq R+2.$$   By \eqref{Kk} and  H\"older inequality, from \eqref{vk-upper} $$\tilde v(x)\leq \frac14\log |x| +C,\quad |x|\geq R+2.$$ Therefore, $$|\tilde v(x)|\leq C\int_{\R^n}\left|\log\left(\frac{1+|y|}{|x-y|}\right)\right| e^{-|y|^2}dy\leq C\log (2+|x|),$$ thanks to Lemma \ref{local}.
\hfill$\square$

\appendix

\section{Appendix}
\subsection{A Pohozaev-type result}

\begin{prop}\label{poho-3}
{Assume that  $K\in W^{1,1}_{\loc}(\R^n \setminus \{0\})\cap L^p_{\loc}(\R^n)$ for some $p>1$, $n\ge 2$.} Let $\eta$ be a solution to the integral equation 
\begin{equation}\label{int eq}
\eta(x)= \frac{1}{\gamma_n}\int_{\R^n}  \log\left(\frac{1+|y|}{|x-y|}\right)K(y) e^{n \eta(y)} dy + c 
\end{equation}
for some $c\in \R$, {with $Ke^{n \eta}\in L^1(\R^n)$ and $(\nabla K \cdot x) \, e^{n \eta} \in L^1(\R^n)$}. If there exists $R_0,\eps >0$ such that 
 \begin{align}\label{eta-est bis}
 | K (x)|e^{n\eta(x)} \le \frac{1}{|x|^{n+\eps}} \quad\text{for } |x|\geq R_0,
  \end{align} 
  then, {denoting $\Lambda:=\int_{\R^n} K(x)e^{n\eta(x)} dx,$} we have
\begin{equation}\label{general Poho}
\frac{\Lambda}{\gamma_n}\left(\Lambda-2\gamma_n\right) =\frac{2}{n }\int_{\R^n}\left(x\cdotp\nabla K(x )\right)e^{n\eta(x)}dx,
  \end{equation}
\end{prop}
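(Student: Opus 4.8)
The plan is to work directly from the integral representation \eqref{int eq}, avoiding a differential Pohozaev identity (which would be problematic since $\eta\sim-\frac{\Lambda}{\gamma_n}\log|x|$ only). Set $f:=Ke^{n\eta}$, so that $f\in L^1(\R^n)$ by hypothesis and $|f(x)|\le|x|^{-n-\eps}$ for $|x|\ge R_0$ by \eqref{eta-est bis}; moreover $e^{n\eta}\in L^q_{\loc}(\R^n)$ for every $q<\infty$ by the Jensen/Brezis--Merle argument used in the proof of Theorem \ref{thmreg} (which needs only $f\in L^1$), hence $f\in L^s_{\loc}(\R^n)$ for every $s<p$. The first step is a purely algebraic reduction: writing $x\cdot\nabla K=\operatorname{div}(xK)-nK$ and integrating by parts in the factor $e^{n\eta}$ gives
\[
\frac2n\int_{\R^n}(x\cdot\nabla K)\,e^{n\eta}\,dx=-2\int_{\R^n}(x\cdot\nabla\eta)\,f\,dx-2\Lambda ,
\]
so that, since $\frac{\Lambda}{\gamma_n}(\Lambda-2\gamma_n)=\frac{\Lambda^2}{\gamma_n}-2\Lambda$, the identity \eqref{general Poho} is equivalent to the single statement
\[
\int_{\R^n}\bigl(x\cdot\nabla\eta(x)\bigr)\,f(x)\,dx=-\frac{\Lambda^2}{2\gamma_n}.
\]

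To prove this I would differentiate \eqref{int eq} under the integral sign (the $x$-dependence sits only in $-\log|x-y|$, whose gradient $-\tfrac{x-y}{|x-y|^2}$ is locally integrable and, convolved with $f\in L^1$, a.e.\ finite), obtaining
\[
x\cdot\nabla\eta(x)=-\frac1{\gamma_n}\int_{\R^n}\frac{x\cdot(x-y)}{|x-y|^2}\,f(y)\,dy .
\]
Substituting, using Fubini, and symmetrising in the exchange $x\leftrightarrow y$ is the heart of the matter:
\[
\int_{\R^n}(x\cdot\nabla\eta)f\,dx=-\frac1{2\gamma_n}\iint_{\R^n\times\R^n}\frac{x\cdot(x-y)+y\cdot(y-x)}{|x-y|^2}\,f(x)f(y)\,dx\,dy ,
\]
and since $x\cdot(x-y)+y\cdot(y-x)=|x-y|^2$ the kernel collapses to $1$, so the right-hand side equals $-\tfrac1{2\gamma_n}\bigl(\int_{\R^n}f\bigr)^2=-\tfrac{\Lambda^2}{2\gamma_n}$. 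Together with the reduction above this proves \eqref{general Poho}.

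The real work, and the main obstacle, is justifying these formal steps by supplying the required integrability. For the integration by parts I would integrate over annuli $B_R\setminus B_\delta$ (since $K\in W^{1,1}_{\loc}$ only away from $0$): the outer boundary term is $\le R\int_{\partial B_R}|K|e^{n\eta}\le CR^{n}\sup_{\partial B_R}|K|e^{n\eta}\le CR^{-\eps}\to0$ by \eqref{eta-est bis}, while the inner boundary term $\delta\int_{\partial B_\delta}|K|e^{n\eta}$ vanishes along a sequence $\delta_j\to0$ because $Ke^{n\eta}\in L^1$ and the coarea formula force $\liminf_{\delta\to0}\delta\int_{\partial B_\delta}|K|e^{n\eta}=0$. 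The bulk integrals converge thanks to the hypotheses $Ke^{n\eta},\,(\nabla K\cdot x)e^{n\eta}\in L^1$ together with $(x\cdot\nabla\eta)\,Ke^{n\eta}\in L^1(\R^n)$; for this last fact, and for the absolute convergence of the double integral needed for Fubini, I would observe that $|f(x)|\le|x|^{-n-\eps}$ at infinity forces $|x\cdot\nabla\eta(x)|\le\frac{|x|}{\gamma_n}\int\frac{|f(y)|}{|x-y|}dy\le C$ for $|x|$ large — splitting the $y$-integral into $|y|<\tfrac{|x|}2$, $|y|>2|x|$, and the annulus about $x$ — so that $(x\cdot\nabla\eta)f=O(|x|^{-n-\eps})$ there, while near the origin and on compact sets one uses $f\in L^s_{\loc}$ ($s>1$) and Calder\'on--Zygmund/Sobolev estimates for the potential $v$ to control the remaining integrals. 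This local integrability bookkeeping near the singularity of $K$ at $0$ is the only delicate point; the identity itself is essentially the one-line symmetrisation above.
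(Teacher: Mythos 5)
Your argument is correct and is essentially the paper's proof (which follows Xu's Theorem 1.1): differentiate the integral equation, compute $\int (x\cdot\nabla\eta)\,Ke^{n\eta}\,dx$ once by integrating $x\cdot\nabla(e^{n\eta})$ by parts against $K$ and once by symmetrizing the resulting double integral, with the decay \eqref{eta-est bis} killing the boundary/tail contributions. The only cosmetic difference is that the paper truncates to $B_R$ (so a cross term $\int_{B_R}\int_{B_R^c}$ appears and is shown to vanish as $R\to\infty$) whereas you work on all of $\R^n$ after checking absolute integrability; both hinge on the same estimates.
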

\begin{proof}
 Noticing that ${\eta\in C^0(\R^n)\cap W^{1,1}_{\loc}(\R^n)}$, in the spirit of  \cite[Theorem 1.1]{Xu}, for any $R>0$ we can multiply $\nabla \eta $ by $x\cdot \nabla \eta $ and integrate on $B_R$ using the divergence theorem to  get  
 \begin{align*}&\int_{B_R}Ke^{n\eta}dx+\frac1n\int_{B_R}\left(x\cdot\nabla K(x)\right)e^{n\eta}dx-\frac Rn\int_{\partial B_R}Ke^{n\eta}d\sigma(x) \\ &= \frac{1}{2\gamma_n}\int_{ B_R}\int_{B_R^c}\frac{(x-y)\cdot(x+y)}{|x-y|^2}K(y)e^{n\eta(y)}K(x)e^{n\eta(x)}dydx \\ &\quad +\frac{1}{2\gamma_n}\int_{\R^n}K(y)e^{n\eta(y)}dy\int_{B_R}K(x)e^{n\eta(x)}dx.\end{align*} 
It follows from \eqref{eta-est bis} that the boundary term and the double integral term  go to $0$ as $R\to\infty$. Therefore,  taking $R\to\infty$, we obtain  \eqref{general Poho}.

\end{proof}

We are interested in the following special cases {of the above proposition. }

\begin{prop}\label{special-poho} 
Given $n\ge 2$, $\alpha>-1$, $\mu\ge 0$  let $\eta$ be a solution to \eqref{int eq} with $c\in \R$,  $K := |\cdot|^{n\alpha}e^{-\mu |\cdot|^2}$ and 
$$
\Lambda:=\int_{\R^n} K(x)e^{n\eta(x)} dx <+\infty.
$$
Then, $\Lambda  \le  \Lambda_1(1+\alpha)$ and the equality holds if and only if $\mu =0$.  
\end{prop}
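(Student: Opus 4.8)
The plan is to derive Proposition \ref{special-poho} directly from the general Pohozaev identity \eqref{general Poho} in Proposition \ref{poho-3}, applied to the specific kernel $K(x)=|x|^{n\alpha}e^{-\mu|x|^2}$. First I would check that this $K$ satisfies the hypotheses of Proposition \ref{poho-3}: since $\alpha>-1$ we have $|x|^{n\alpha}\in L^p_{\loc}(\R^n)$ for $p$ slightly bigger than $1$ (take $p<-1/\alpha$ when $\alpha<0$, any $p$ when $\alpha\ge0$), and $K\in W^{1,1}_{\loc}(\R^n\setminus\{0\})$ because away from the origin $K$ is smooth. The integrability conditions $Ke^{n\eta}\in L^1$ is assumed, and $(\nabla K\cdot x)e^{n\eta}\in L^1$ follows from $x\cdot\nabla K = (n\alpha-2\mu|x|^2)K$ together with the decay estimate; in fact the decay estimate \eqref{eta-est bis} itself must be established. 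For that I would invoke the asymptotic behaviour: arguing as in Lemmas \ref{below}--\ref{lpv} (or directly via Remark \ref{newrmk}, noting $K\in L^\infty(\R^n\setminus B_1)$ since the Gaussian factor dominates), one gets $\eta(x) = -\beta\log|x| + o(\log|x|)$ with $\beta = \Lambda/\gamma_n$, so $Ke^{n\eta}$ decays like $|x|^{n\alpha-n\beta}e^{-n\mu|x|^2}$, which is certainly $\le |x|^{-n-\eps}$ for $|x|$ large (when $\mu>0$ trivially because of the Gaussian; when $\mu=0$ one needs $n\beta > n\alpha + n$ near infinity, i.e. a little more care, but $o(\log|x|)$ corrections are harmless once we know $\beta$; alternatively just require $\beta-\alpha$ large enough which will be a consequence of the final inequality — but to avoid circularity one should extract the decay from Lemma \ref{lpv}-type arguments first).

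Next, I would compute the right-hand side of \eqref{general Poho} for this $K$. We have $x\cdot\nabla K(x) = n\alpha|x|^{n\alpha}e^{-\mu|x|^2} - 2\mu|x|^2|x|^{n\alpha}e^{-\mu|x|^2} = n\alpha K(x) - 2\mu|x|^2K(x)$. Therefore
$$\int_{\R^n}(x\cdot\nabla K)e^{n\eta}dx = n\alpha\Lambda - 2\mu\int_{\R^n}|x|^2K(x)e^{n\eta(x)}dx.$$
Plugging into \eqref{general Poho}:
$$\frac{\Lambda}{\gamma_n}(\Lambda - 2\gamma_n) = \frac{2}{n}\left(n\alpha\Lambda - 2\mu\int_{\R^n}|x|^2Ke^{n\eta}dx\right) = 2\alpha\Lambda - \frac{4\mu}{n}\int_{\R^n}|x|^2Ke^{n\eta}dx.$$
Rearranging gives
$$\frac{\Lambda}{\gamma_n}\left(\Lambda - 2\gamma_n - 2\alpha\gamma_n\right) = -\frac{4\mu}{n}\int_{\R^n}|x|^2Ke^{n\eta}dx \le 0,$$
since $\mu\ge0$ and the integrand is non-negative. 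Recalling $\Lambda_1 = (n-1)!|S^n| = 2\gamma_n$ (from the definition $\gamma_n = \frac{(n-1)!}{2}|S^n|$), we have $2\gamma_n + 2\alpha\gamma_n = 2\gamma_n(1+\alpha) = \Lambda_1(1+\alpha)$, so the identity reads
$$\frac{\Lambda}{\gamma_n}\bigl(\Lambda - \Lambda_1(1+\alpha)\bigr) = -\frac{4\mu}{n}\int_{\R^n}|x|^2Ke^{n\eta}dx \le 0.$$
Since $\Lambda>0$ and $\gamma_n>0$, this forces $\Lambda\le\Lambda_1(1+\alpha)$.

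Finally, for the equality case: if $\mu=0$ the right-hand side vanishes and, since $\Lambda>0$, we must have $\Lambda = \Lambda_1(1+\alpha)$. Conversely if $\mu>0$, then the integral $\int|x|^2Ke^{n\eta}dx$ is strictly positive (the integrand is positive on a set of positive measure, as $e^{n\eta}>0$ everywhere and $K>0$ a.e.), so the right-hand side is strictly negative, giving $\Lambda < \Lambda_1(1+\alpha)$. This completes the proof. The main obstacle, and the only nontrivial point, is justifying the decay hypothesis \eqref{eta-est bis} for the case $\mu=0$ without circular reasoning — there one must genuinely first establish $\eta(x)/\log|x|\to -\beta$ via the Campanato/Jensen machinery of Lemmas \ref{below}--\ref{lpv} and Remark \ref{newrmk} (applied with $K=|\cdot|^{n\alpha}$, which lies in $L^\infty(\R^n\setminus B_1)$), and then check that $n\beta - n\alpha > n + \eps$ for some $\eps>0$; but this last inequality is exactly what a weaker form of the conclusion gives, so one should instead note that the decay estimate \eqref{eta-est bis} with \emph{some} $\eps>0$ only requires $\beta > \alpha$, i.e. $\Lambda > \alpha\gamma_n$, and handle this by first proving the non-strict bound under the standing assumption $\alpha\ge\beta$ is \emph{not} in force here — so in fact one simply observes that the finiteness of $\Lambda$ together with $\eta = -\beta\log|x| + o(\log|x|)$ and $K$ bounded outside $B_1$ already yields $|x|^{n\alpha}e^{n\eta}\le |x|^{n\alpha - n\beta + o(1)}$, which is integrable only if $\beta > \alpha + 1 > \alpha$, closing the gap.
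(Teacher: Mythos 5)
Your proposal is correct and follows essentially the same route as the paper: apply Proposition \ref{poho-3} with $x\cdot\nabla K=(n\alpha-2\mu|x|^2)K$, obtain $\frac{\Lambda}{\gamma_n}(\Lambda-2\gamma_n)\le 2\alpha\Lambda$ with equality iff $\mu=0$, and verify \eqref{eta-est bis} via the asymptotics $\eta(x)/\log|x|\to-\beta$ (Theorem \ref{mainthm} for $\mu=0$, Remark \ref{newrmk} for $\mu>0$) together with $\beta>1+\alpha$. Only note that your final sentence has the implication backwards: the strict inequality $\beta>1+\alpha$ comes from combining $Ke^{n\eta}\in L^1$ with the \emph{lower} bound $\eta(x)\ge-\beta\log|x|+c$ of Lemma \ref{below} (not from integrability of an upper bound), which is exactly how the paper closes this step.
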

\begin{proof}
First, we claim that \eqref{eta-est bis} holds. If $\mu=0$, then $\eta$ is a normal solution to $(-\Delta)^\frac{n}{2}\eta = |x|^{n\alpha}e^{n \eta}$ and the classification part of Theorem \ref{mainthm} implies that 
\begin{equation}\label{Eta Log}
\lim_{|x|\to\infty}\frac{\eta(x)}{\log|x|}=-\beta, \quad \beta:=\frac{\Lambda}{\gamma_n}.
\end{equation} 
Moreover, arguing as in Lemma \ref{below}   we get
\begin{equation}
v(x)\ge -\beta \log|x|+c,  \quad |x|\ge 1,
\end{equation}
and from  $K e^{n\eta}\in L^1$ we find that $\beta > 1+\alpha$. Then \eqref{eta-est bis} follows at once form \eqref{Eta Log}.  If $\mu>0$,  we get \eqref{Eta Log} from Remark \ref{newrmk} and the function $K e^{n\eta}$ decays exponentially at infinity, so that  \eqref{eta-est bis} trivially holds.  
Observe now that {the condition $(\nabla K \cdot x ) e^{n\eta}\in L^1(\R^n)$ is satisfied}, since $\nabla K \cdot x = n\alpha K$ for $\mu=0$, and since $(\nabla K \cdot x ) e^{n\eta}$ decays exponentially for $\mu >0$ thanks to \eqref{Eta Log}.
Then, we can apply Proposition \ref{poho-3} to get 
\begin{align*}  \frac{\Lambda}{\gamma_n}\left(\Lambda-2\gamma_n\right)&
            =\frac{2}{n }\int_{\R^n}\left(x\cdotp\nabla\left(|x|^{n\alpha}e^{-\mu|x|^2}\right)\right)e^{n\eta(x)}dx\\
       &   =2\alpha\Lambda-\frac{4\mu}{n }\int_{\R^n}|x|^{n\alpha+2}e^{-\mu|x|^2}e^{n\eta(x)}dx\\
       & \le 2\alpha\Lambda,
       \end{align*}
       with equality holding iff $\mu =0$. Since $\Lambda_1=2\gamma_n$, the proof is complete. 
\end{proof}


\subsection{A Liouville-type theorem}

\begin{lem}\label{u+ to 0}
Let $\alpha\in(-1,0)$ and $u$ be {a measurable function such that $|\cdot|^{n\alpha} e^{n u}\in L^1(\R^n)$}. Then for any $x\in\rn$ we have
$$\fint_{B_R (x)} u^+\, dy\to 0$$
as $R\to\infty$.
\end{lem}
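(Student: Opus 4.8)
The plan is to reduce the statement to the elementary pointwise inequality $u^{+}\le \frac1n e^{nu}$, valid a.e.\ on $\R^n$: indeed $u^{+}=0$ wherever $u<0$, while wherever $u\ge 0$ one has $nu\le e^{nu}$ because $t\le e^{t}$ for every $t\in\R$. Hence it suffices to show that $\fint_{B_{R}(x)}e^{nu}\,dy\to 0$ as $R\to\infty$; this will also give $e^{nu}\in L^{1}_{\loc}(\R^n)$ and therefore $u^{+}\in L^{1}_{\loc}(\R^n)$, so that all the averages below make sense.

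To estimate $\int_{B_{R}(x)}e^{nu}$ I would use the sign of the weight. Since $\alpha<0$ we have $n\alpha<0$, and for $R\ge |x|$ the inclusion $B_{R}(x)\subset B_{2R}(0)$ gives $|y|^{n\alpha}\ge (2R)^{n\alpha}$ for every $y\in B_{R}(x)$. Consequently
\[
\int_{B_{R}(x)}e^{nu}\,dy\ \le\ (2R)^{-n\alpha}\int_{B_{R}(x)}|y|^{n\alpha}e^{nu}\,dy\ \le\ (2R)^{-n\alpha}\bigl\||\cdot|^{n\alpha}e^{nu}\bigr\|_{L^{1}(\R^n)},
\]
and dividing by $|B_{R}|=|B_{1}|R^{n}$, together with $u^{+}\le\frac1n e^{nu}$, yields
\[
\fint_{B_{R}(x)}u^{+}\,dy\ \le\ \frac1n\fint_{B_{R}(x)}e^{nu}\,dy\ \le\ C\,R^{-n(1+\alpha)},\qquad R\ge |x|,
\]
with $C$ depending only on $n$, $\alpha$ and $\bigl\||\cdot|^{n\alpha}e^{nu}\bigr\|_{L^{1}(\R^n)}$. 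Since $\alpha>-1$, the exponent $-n(1+\alpha)$ is negative and the right-hand side tends to $0$ as $R\to\infty$, which is the claim.

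I do not expect any genuine obstacle here: the argument is a two-line comparison. The only point worth flagging is that the decay $R^{-n(1+\alpha)}\to 0$ uses the standing hypothesis $\alpha>-1$ in an essential way — precisely the same condition that makes $|y|^{n\alpha}$ locally integrable near the origin — and that no convexity/Jensen argument is needed, the crude pointwise bound $u^{+}\le\frac1n e^{nu}$ being enough.
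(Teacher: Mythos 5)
Your argument is correct and is essentially the paper's own proof: both use the pointwise bound $nu^{+}\le e^{nu}$ and then bound $|y|^{n\alpha}$ from below on $B_R(x)$ (the paper by $(R+|x|)^{n\alpha}$, you by $(2R)^{n\alpha}$, which is the same thing for fixed $x$) to get the decay $R^{-n(1+\alpha)}\to 0$. No differences worth noting.
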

\begin{proof}
Fix $x\in\rn$. With $nu^+\leq e^{n u}$, multiplying and dividing by $|y|^{n\alpha}$ we get
\[
\begin{split}
n\fint_{B_R(x)}u^+\, dy&\leq \fint_{B_R(x)}e^{nu}\, dy\\&
\leq \frac{C (R+|x|)^{-n\alpha}}{R^{n}}\int_{B_R(x)}|y|^{n\alpha}e^{nu}\, dy\\&
\leq  \frac{C (R+|x|)^{-n\alpha}}{R^{n}},
\end{split}
\]
where we used that for $y\in B_R(x)$ we have $|y|\leq R+|x|$ and that $\int_{\rn} |x|^{n\alpha}e^{nu}\, dx<\infty$. The claim follows letting $R\to\infty$ since $\alpha\in(-1,0)$. 
\end{proof}

{
\begin{thm}\label{thm6}
Let $\alpha>-1$, $m\ge 1$ and consider $h\colon \rn\to \R$ with $(-\Delta) ^\frac{m}{2} h=0$ in the sense of Definition \ref{def-soln}. If $m$ is even, assume further that $h(x)\leq u+C\log(1+|x|)+C$ for any $x\in \R^n$, with $\int_{\rn} |x|^{n\alpha} e^{nu}\, dx<\infty$. Then $h$ is a polynomial of degree at most $m-1$.
\end{thm}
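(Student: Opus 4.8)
The plan is to treat the cases $m$ odd and $m$ even separately, since the hypotheses are of quite different nature. \emph{When $m$ is odd,} Definition \ref{def-soln} only gives $h\in L_{m/2}(\R^n)$ together with $\int_{\R^n}h\,(-\Delta)^{m/2}\varphi\,dx=0$ for all $\varphi\in\mathcal S(\R^n)$. Since $h$ is then a tempered distribution, passing to Fourier transforms this identity says exactly that $|\xi|^m\widehat h=0$ in $\mathcal S'(\R^n)$, so $\widehat h$ is supported at the origin and $h$ coincides a.e.\ with a polynomial $P$. The integrability condition $\int_{\R^n}|P(x)|(1+|x|^{n+m})^{-1}\,dx<\infty$ defining $L_{m/2}(\R^n)$ then forces $\deg P\le m-1$. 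The only delicate point is justifying the implication $|\xi|^m\widehat h=0\Rightarrow\widehat h$ supported at $\{0\}$ when $m$ is odd (so that $|\xi|^m$ is not smooth at the origin); I would argue exactly as in the proof of \cite[Lemma 2.4]{H-clas}, mollifying $h$ and using that convolution commutes with $(-\Delta)^{m/2}$.

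\emph{When $m=2k$ is even,} $\Delta^k h=0$ in the distributional sense, so by interior elliptic regularity $h$ is real-analytic, in particular smooth, and satisfies the interior estimates
\[
|\nabla^m h(x_0)|\le\frac{C(n,m)}{R^m}\fint_{B_R(x_0)}|h(y)|\,dy,\qquad x_0\in\R^n,\ R>0,
\]
obtained by iterating the elliptic estimates for $\Delta^k$. I would then write $|h|=2h^+-h$ and estimate the two contributions. For $\fint_{B_R(x_0)}h$ I would invoke Pizzetti's mean-value formula for polyharmonic functions: the map $R\mapsto\fint_{B_R(x_0)}h$ is a polynomial in $R^2$ of degree at most $k-1$, hence $\big|\fint_{B_R(x_0)}h\big|\le C(x_0)(1+R^{m-2})$. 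For $\fint_{B_R(x_0)}h^+$ I would use the hypothesis $h^+\le u^++C\log(1+|x|)+C$ together with $\int_{\R^n}|x|^{n\alpha}e^{nu}\,dx<\infty$: bounding $u^+$ by $\tfrac1n e^{nu}$ (or via Jensen's inequality), treating the region $\{|x|\le 1\}$ by the fact that $h$ is bounded there, and treating $\{|x|>1\}$ by noting that $|x|^{-n\alpha}$ is bounded for $\alpha\ge 0$ while Lemma \ref{u+ to 0} applies for $\alpha\in(-1,0)$, one obtains $\fint_{B_R(x_0)}h^+=O\big(\log(2+|x_0|+R)\big)$. Combining the two bounds gives $\fint_{B_R(x_0)}|h|=o(R^m)$ as $R\to\infty$, so letting $R\to\infty$ in the interior estimate yields $\nabla^m h(x_0)=0$ for every $x_0$; hence $h$ is a polynomial of degree at most $m-1$.

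The step I expect to be the main obstacle is the bound $\fint_{B_R(x_0)}h^+=O(\log R)$ for $m$ even: the control on $h$ is only one-sided, so one cannot apply two-sided interior estimates to $h$ directly, and the argument must carefully weigh the logarithmic growth permitted in the hypothesis against the decay of $e^{nu}$ forced by $\int_{\R^n}|x|^{n\alpha}e^{nu}\,dx<\infty$ — this is exactly the role played by Lemma \ref{u+ to 0}. Everything else (Pizzetti's formula, the interior elliptic estimates, and the distributional Fourier argument for $m$ odd) is classical, so once this estimate is in place the proof is short.
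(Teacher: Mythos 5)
Your proposal is correct and takes essentially the same approach as the paper, which for this theorem simply cites its sources: the Fourier-support argument of \cite[Lemma~2.4]{H-clas} for $m$ odd, and the interior-estimate/Pizzetti argument of \cite[Theorem~6]{MarClass} for $m$ even, noting only that for $\alpha\in(-1,0)$ the estimate $\fint_{B_R(x)}u^+\,dy\to 0$ requires the weighted Lemma~\ref{u+ to 0}. Your reconstruction correctly fills in the details behind those citations, including the case split for the weight $|x|^{n\alpha}$ between $\alpha\geq0$ and $\alpha\in(-1,0)$.
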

\begin{proof}
If $m\ge 1$ is even, the proof is almost identical to the one of Theorem $6$ in \cite{MarClass}, the only difference being the estimate of the term $\fint_{B_R(x)} u^+\, dy\to 0$ for $\alpha\in (-1,0)$, which is true thanks to Lemma \ref{u+ to 0}. 
In the case $m\geq 1$ odd, notice that $h\in  L_{\frac m2}(\R^n)$ by Definition \ref{def-soln}.  This implies that  $h$ is a polynomial of degree at most $m-1$ (see e.g. the proof of \cite[Lemma 2.4]{H-clas}).
\end{proof}
}

\subsection{A non-local elliptic estimate}

%

\begin{prop}\label{stimaell}
Assume $n\ge 3$. Let $(u_k)\subset L_{\frac{n}{2}}(\R^n)$ be a sequence of solutions to
$$(-\De )^\frac n2u_k=f_k\quad \text{in }\R^n$$
for some $f_k\in L^1(\R^n)$ satisfying \begin{align}\|f_k\|_{L^1(\R^n)}\leq C,\quad \|f_k\|_{L^p(B_R)}\leq C,\quad \|f_k\|_{L^\infty(A)}\leq C, \end{align} for some $p>1$, $R>0$ and an open set $A\Subset \R^n\setminus\{0\}$. Assume further that
$$u_k\leq u_k(0)=0\text{ in } B_R\quad\text{and } \int_{B_R}|\De u_k|dx\leq C.$$
Then the sequence $(u_k)$ is bounded $C^{0,\sigma_1}_{\loc}(B_R)$ and in $C^{n-1,\sigma_2}_{\loc}(A)$ for some $\sigma_1=\sigma_1(p)\in (0,1)$ and for every $\sigma_2\in (0,1)$. 
\end{prop}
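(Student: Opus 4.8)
The plan is to write each $u_k$ as the sum of Riesz--log potentials of suitable pieces of $f_k$ and an entire $\frac n2$-harmonic remainder, to estimate the potentials by the $L^1$, $L^p$ and $L^\infty$ bounds on $f_k$, and to control the remainder polynomial using the normalisation $u_k\le u_k(0)=0$ in $B_R$ together with $\int_{B_R}|\Delta u_k|\,dx\le C$. Concretely, I would fix $0<R'<R$ and open sets $A'\Subset A''\Subset A$, and split $f_k=g_k^1+g_k^2+g_k^3$ with $g_k^1:=f_k\chi_{B_R}$, $g_k^3:=f_k\chi_{A''}$ and $g_k^2:=f_k-g_k^1-g_k^3$. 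Set
$$v_k^i(x):=\frac1{\gamma_n}\int_{\R^n}\log\left(\frac{1+|y|}{|x-y|}\right)g_k^i(y)\,dy,\qquad i=1,2,3.$$
Using only $\|f_k\|_{L^1(\R^n)}\le C$, one checks (as for the function $v$ in Definition~\ref{normal}) that each $v_k^i$ is well defined, belongs to $L_{\frac n2}(\R^n)$, and solves $(-\Delta)^{\frac n2}v_k^i=g_k^i$ on $\R^n$ in the sense of Definition~\ref{def-soln}; the normalised kernel $\log\frac{1+|y|}{|x-y|}$ is what keeps these potentials in $L_{\frac n2}$ when $f_k$ carries no decay. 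Hence $h_k:=u_k-v_k^1-v_k^2-v_k^3$ lies in $L_{\frac n2}(\R^n)$ and is $\frac n2$-harmonic on all of $\R^n$, so by the Liouville-type theorem for entire $\frac n2$-harmonic functions of at most polynomial growth (cf.\ Theorem~\ref{thm6} and the proof of \cite[Lemma 2.4]{H-clas}) it is a polynomial of degree at most $n-1$.

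Next I would estimate the three potentials. Since $\|g_k^1\|_{L^p(\R^n)}=\|f_k\|_{L^p(B_R)}\le C$, Young's inequality gives $\|v_k^1\|_{L^p(B_{2R})}\le C$, and the Calderón--Zygmund theory (its fractional analogue when $n$ is odd) yields $\|v_k^1\|_{W^{n,p}(B_{R'})}\le C$; since $p>1$, the Sobolev embedding $W^{n,p}\hookrightarrow C^{0,\sigma_1}$ holds for some $\sigma_1=\sigma_1(p)\in(0,1)$, so $v_k^1$ is bounded in $C^{0,\sigma_1}(B_{R'})$. The density $g_k^2$ is supported at positive distance from both $B_{R'}$ and $A'$, and on those regions the kernel $\log\frac{1+|y|}{|x-y|}$ and all its $x$-derivatives are bounded uniformly in $y$, so $\|v_k^2\|_{C^m(B_{R'})}+\|v_k^2\|_{C^m(A')}\le C_m\|f_k\|_{L^1(\R^n)}\le C_m$ for every $m$. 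Finally $g_k^3$ is compactly supported with $\|g_k^3\|_{L^\infty(\R^n)}\le\|f_k\|_{L^\infty(A)}\le C$, so $v_k^3$ is bounded in $L^1_{\loc}$ and solves $(-\Delta)^{\frac n2}v_k^3\in L^\infty(\R^n)$; Calderón--Zygmund together with Schauder estimates (and their fractional counterparts when $n$ is odd) then bound $v_k^3$ in $C^{n-1,\sigma_2}_{\loc}(\R^n)$ for every $\sigma_2\in(0,1)$, in particular in $C^{0,\sigma_1}(B_{R'})$ and in $C^{n-1,\sigma_2}(A')$.

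It remains to bound the polynomial $h_k$ quantitatively. From the previous step $\|v_k^1+v_k^2+v_k^3\|_{C^0(B_{R'})}\le C$, so $u_k\le 0$ in $B_R$ gives $\sup_{B_{R'}}h_k\le C$ and $u_k(0)=0$ gives $|h_k(0)|\le C$. Writing $-\Delta E_n=c_n|x|^{2-n}\in L^1_{\loc}(\R^n)$ for the fundamental solution $E_n$ of $(-\Delta)^{\frac n2}$ (this is the only place $n\ge3$ is used), one has $|\Delta v_k^i(x)|\le C\int_{\R^n}|x-y|^{2-n}|g_k^i(y)|\,dy$, whence $\int_{B_{R'}}|\Delta v_k^i|\,dx\le C\|f_k\|_{L^1(\R^n)}\le C$; combined with $\int_{B_R}|\Delta u_k|\,dx\le C$ this gives $\int_{B_{R'}}|\Delta h_k|\,dx\le C$. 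Since $\Delta h_k$ is a polynomial of degree $\le n-3$, finite-dimensionality of the space of such polynomials upgrades this to $\|\Delta h_k\|_{C^0(B_{R'})}\le C$. Picking a linear right inverse of the Laplacian on polynomials of degree $\le n-1$, I would write $h_k=\pi_k+\psi_k$ with $\psi_k$ a polynomial satisfying $\|\psi_k\|_{C^{n-1}(B_{R'})}\le C$ and $\pi_k$ a harmonic polynomial of degree $\le n-1$; then $\sup_{B_{R'}}\pi_k\le C$ and, by the mean value property, $\fint_{B_{R'}}\pi_k=\pi_k(0)$ with $|\pi_k(0)|\le C$, so that $\|\pi_k\|_{L^1(B_{R'})}\le\int_{B_{R'}}(2C-\pi_k)\,dx\le C$ and, again by finite-dimensionality, $\|\pi_k\|_{C^{n-1}(B_{R'})}\le C$. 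Thus $\|h_k\|_{C^{n-1}(B_{R'})}\le C$, and therefore $u_k=v_k^1+v_k^2+v_k^3+h_k$ is bounded in $C^{0,\sigma_1}(B_{R'})$ and in $C^{n-1,\sigma_2}(A')$, for every $R'<R$, $A'\Subset A$ and $\sigma_2\in(0,1)$, which is the assertion.

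The main obstacle is of two kinds. First, the nonlocality when $n$ is odd: one must justify the decomposition $u_k=\sum_iv_k^i+h_k$ globally on $\R^n$, so that $h_k$ is genuinely $\frac n2$-harmonic and the Liouville theorem applies — this is exactly why the hypothesis $u_k\in L_{\frac n2}(\R^n)$ and the normalised kernel $\log\frac{1+|y|}{|x-y|}$ are indispensable. Second, the quantitative control of $h_k$: a one-sided bound $u_k\le 0$ alone cannot exclude large contributions of the form $-|x|^2$ or $-|x|^4$, and it is precisely the hypothesis $\int_{B_R}|\Delta u_k|\,dx\le C$ that pins down $\Delta h_k$ and hence these ``concave'' directions, while $u_k(0)=0$ removes the remaining additive freedom.
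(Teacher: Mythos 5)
Your proof is correct and takes essentially the same route as the paper's: write $u_k=v_k+p_k$ with $v_k$ the normalized log-potential of $f_k$, identify $p_k$ as a polynomial of degree at most $n-1$ via the Liouville theorem of \cite[Lemma 2.4]{H-clas}, estimate $v_k$ locally from the $L^1$/$L^p$/$L^\infty$ bounds on $f_k$, and pin down $p_k$ through $\sup_{B_R}p_k$, $p_k(0)$ and $\int_{B_{R/2}}|\Delta p_k|\,dx$ --- your explicit three-way splitting of $f_k$ and the finite-dimensionality argument for the polynomial merely flesh out steps the paper asserts in one line each. The only cosmetic slip is that $A''$ may intersect $B_R$, in which case $g_k^2$ is not literally supported away from $B_{R'}\cup A'$; defining $g_k^3:=f_k\chi_{A''\setminus B_R}$ fixes this without changing anything else.
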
 
\begin{proof} 
We set \begin{align}\label{vk} v_k(x):=\frac{1}{\gamma_n}\int_{\R^n}\log\left(\frac{1+|y|}{|x-y|}\right)f_k(y)dy,\quad p_k:=u_k-v_k.\end{align}  Then by \cite[Lemma 2.4]{H-clas} we have that $p_k$ is a polynomial of degree at most $n-1$. It follows from the assumptions on $f_k$ and from \eqref{vk} that $(v_k)$ is bounded in $C^{0,\sigma_1}_{\loc}(B_R)$ and in $C^{n-1,\sigma_2}_{\loc}(A)$.
Therefore, $p_k$ satisfies $$\sup_{B_R}p_k\leq C(R),\quad |p_k(0)|\leq C,\quad \int_{B_\frac R2}|\De p_k|dx\leq C.$$ Hence, $(p_k)$ is bounded in $C^\ell_{\loc}(\R^n)$ for every $\ell\in\mathbb{N}$.
\end{proof}

\subsection{Some integral estimates}

\begin{lem}\label{log}
There exists $C>0$ such that for every $\rho\in (0,1]$ we have for any $x, \xi\in\R^{n}$
\begin{itemize}
\item[i)] $$\int_{B_\rho(x)}\int_{B_\rho(x)}\left|\log\bra{\frac{|z-\xi|}{|y-\xi|}}\right|dzdy\leq C\rho^{2n}.$$
\item[ii)]  If $|x-\xi|\geq { 2\sqrt{\rho}}$ then  $$\int_{B_\rho(x)}\int_{B_\rho(x)}\left|\log\bra{\frac{|z-\xi|}{|y-\xi|}}\right|dzdy\leq C\rho^{2n+\frac12}.$$
\item[iii)] $$\int_{B_\rho(x)}\int_{B_\rho(x)}\left|\log\bra{\frac{|z-\xi|}{|y-\xi|}}\right|dzdy=o(1)\rho^{2n},\quad o(1)\xrightarrow{|x-\xi|\to\infty}0.$$
\end{itemize}
\end{lem}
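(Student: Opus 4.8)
The plan is to reduce everything to a single scalar integral by translation and scaling, and then split the domain of integration according to whether $\xi$ is close to the ball $B_\rho(x)$ or far from it. First I would translate so that $x=0$, writing $a:=x-\xi$, so that the quantity to estimate becomes
$$J(a,\rho):=\int_{B_\rho}\int_{B_\rho}\Bigl|\log\tfrac{|z+a|}{|y+a|}\Bigr|\,dz\,dy\le \int_{B_\rho}\int_{B_\rho}\bigl(\bigl|\log|z+a|\bigr|+\bigl|\log|y+a|\bigr|\bigr)\,dz\,dy = 2\rho^n\int_{B_\rho}\bigl|\log|z+a|\bigr|\,dz,$$
after which the double integral is controlled by the one-variable quantity $\rho^n\int_{B_\rho}|\log|z+a||\,dz$. (For part iii) one should keep the difference structure rather than split the logarithm, since the mean of $\log|y+a|$ over $B_\rho$ cancels the leading term; I will come back to this.)

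For part i), I would bound $\int_{B_\rho}|\log|z+a||\,dz$ uniformly in $a$: the worst case is $a$ near $\partial B_\rho$ or inside, and then $\int_{B_{2\rho}}|\log|w||\,dw\le C\rho^n(1+|\log\rho|)\le C\rho^{n}\rho^{-\varepsilon}$ for any $\varepsilon$; but in fact since we only need $\le C\rho^{2n}$ overall and $\rho\le 1$, the clean way is to note $\int_{B_\rho(a)}|\log|z+a||\,dz\le \int_{B_{2\rho}}|\log|w||\,dw$, and for $\rho\le 1$ this is $\le C\rho^{n}|\log\rho|+C\rho^n\le C\rho^{n/2}\cdot\rho^{n/2}\le C\rho^n$ — actually $\rho^n|\log\rho|\le C_n\rho^{n/2}$, which is more than enough; multiplying by the outer $\rho^n$ gives $C\rho^{2n}\cdot(\text{bounded})$, hence i). For part ii), when $|a|\ge 2\sqrt\rho$ and $z\in B_\rho$ we have $|z+a|\ge |a|-\rho\ge \tfrac12|a|$ and $|z+a|\le |a|+\rho\le 2|a|$, so $\bigl|\log|z+a||-\log|a||\bigr|=\bigl|\log\tfrac{|z+a|}{|a|}\bigr|\le \tfrac{\rho}{|a|-\rho}\le \tfrac{2\rho}{|a|}\le \sqrt\rho$; summing the two terms and using that the constant parts $\log|a|$ cancel in $\log\tfrac{|z+a|}{|y+a|}$ gives $|\log\tfrac{|z+a|}{|y+a|}|\le 2\sqrt\rho$ on $B_\rho\times B_\rho$ as soon as $|a|\ge 2\sqrt\rho$, and integrating over $B_\rho\times B_\rho$ yields $C\rho^{2n}\sqrt\rho=C\rho^{2n+\frac12}$. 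For part iii), the same Lipschitz-type estimate gives $|\log\tfrac{|z+a|}{|y+a|}|\le \tfrac{|z-y|}{|a|-\rho}\le \tfrac{2\rho}{|a|}$ whenever $|a|\ge 2\rho$, so the double integral is $\le C\rho^{2n}/|a|=o(1)\rho^{2n}$ as $|a|=|x-\xi|\to\infty$; this handles iii) directly without needing the splitting, and one does not even need to subtract the mean.

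The only mildly delicate point — and the thing I would treat as the "main obstacle" — is part i) in the regime where $\xi$ sits essentially inside (or on the boundary of) $B_\rho(x)$, since then $\log|z+a|$ is genuinely unbounded and one must control the logarithmic singularity; but this is just the standard fact that $\int_{B_r}|\log|w||\,dw=O(r^n\log(1/r))$ for small $r$, which is dominated by $O(r^{n/2})\cdot$const $\le$ const for $r\le 1$, so after multiplying by the outer factor $\rho^n$ one gets the clean bound $C\rho^{2n}$. Everything else is a triangle inequality in the logarithm, a rescaling $w=z-\xi$ or $w=z+a$, and the elementary estimate $|\log(1+t)|\le C|t|$ for $|t|\le \tfrac12$. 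I would assemble the three parts in the order i), ii), iii), presenting the shared reduction once at the start.
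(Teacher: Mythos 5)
Your treatments of parts ii) and iii) are correct and essentially identical to the paper's: in ii) you use the triangle inequality to get $\frac{|z-\xi|}{|y-\xi|}\le 1+2\sqrt{\rho}$ (the paper does the same), and in iii) you use the Lipschitz estimate $\bigl|\log\tfrac{p}{q}\bigr|\le\tfrac{|p-q|}{\min(p,q)}$, which is a minor variant of the paper's ``uniform convergence to $1$'' remark. The issue is part i), where you split $\bigl|\log\tfrac{|z+a|}{|y+a|}\bigr|\le\bigl|\log|z+a|\bigr|+\bigl|\log|y+a|\bigr|$ without first rescaling by $\rho$. This produces a spurious $|\log\rho|$: for $a=0$, a direct computation in polar coordinates gives $\int_{B_\rho}\bigl|\log|z|\bigr|\,dz = c_n\rho^n\bigl(|\log\rho|+\tfrac1n\bigr)$, so after multiplying by the outer $|B_\rho|\sim\rho^n$ you obtain $C\rho^{2n}|\log\rho|$, not $C\rho^{2n}$. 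Your chain $C\rho^n|\log\rho|+C\rho^n\le C\rho^{n/2}\cdot\rho^{n/2}\le C\rho^n$ is false for small $\rho$ (the inequality $\rho^n|\log\rho|\le C\rho^{n/2}$ is true but does not imply $\le C\rho^n$, since $\rho^{n/2}\gg\rho^n$). Moreover the claimed uniformity in $a$ of $\int_{B_\rho}\bigl|\log|z+a|\bigr|\,dz$ fails: for $|a|$ large this is $\sim\rho^n\log|a|\to\infty$; your inclusion $B_\rho(a)\subset B_{2\rho}$ requires $|a|\le\rho$. So the split proves neither the right power of $\rho$ nor the $a$-uniformity.

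The fix, which is what the paper does, is to exploit that the quotient $\tfrac{|z-\xi|}{|y-\xi|}$ is scale-invariant: substituting $z'=(z-x)/\rho$, $y'=(y-x)/\rho$, $\xi'=(\xi-x)/\rho$ turns the double integral into $\rho^{2n}\int_{B_1}\int_{B_1}\bigl|\log\tfrac{|z'-\xi'|}{|y'-\xi'|}\bigr|\,dz'\,dy'$, and one only needs that the unit-ball integral is bounded uniformly in $\xi'$. For $|\xi'|\ge 2$ the triangle inequality gives $\bigl|\log\tfrac{|z'-\xi'|}{|y'-\xi'|}\bigr|\le\log 3$; for $|\xi'|\le 2$ one enlarges the domain to $B_3(\xi')$, translates, and then (and only then) splits the logarithm, which now yields $\le 2|B_3|\int_{B_3}|\log|w||\,dw<\infty$ with no $\rho$ in sight. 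Concretely: rescale before you split, and you automatically get both the correct power $\rho^{2n}$ and the $\xi$-uniformity. (Your Lipschitz bound from iii) already handles the regime $|x-\xi|\ge 2\rho$ for part i), so an alternative repair would be to keep the split only for $|x-\xi|\le 2\rho$, but you would still need to do the rescaling there to kill the $|\log\rho|$.)
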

\begin{proof}
{Under the rescaling}
$${y'=\frac{y-x}\rho\quad\quad\quad z'=\frac{z-x}\rho\quad\quad\quad\xi':=\frac{\xi-x}\rho,}$$
{$i)$ will be equivalent to showing}
$${\int_{B_1(0)}\int_{B_1(0)}\left| \log\left(\frac{|z'-\xi'|}{|y'-\xi' |}\right)\right|dz' dy'\le C,}$$
{independently on $x,\xi'$.\\
If $|\xi'|\ge2$, then we suffice to apply the triangular inequality to get $\left| \log\left(\frac{|z'-\xi'|}{|y'-\xi' |}\right)\right|\le\log3$.\\
On the other hand, for $|\xi'|\le2$ one has}
\[
\begin{split}
{\int_{B_1(0)}\int_{B_1(0)}\left| \log\left(\frac{|z'-\xi'|}{|y'-\xi' |}\right)\right|dz' dy'}
&{\le\int_{B_3(\xi')}\int_{B_3(\xi')}\left| \log\left(\frac{|z'-\xi'|}{|y'-\xi' |}\right)\right|dz' dy'}\\
&=\int_{B_3(0)}\int_{B_3(0)}\left| \log\left(\frac{|z''|}{|y'' |}\right)\right|dz'' dy''\\
&{\leq 2|B_3(0)|\int_{B_3(0)}|\log|z''||dz'''}\\
&\le C.
\end{split}
\]
To prove $ii)$ first notice that for $|x-\xi|\geq { 2\sqrt\rho}$ one has 
$$\frac{|z-\xi|}{|y-\xi|}\leq\frac{|y-\xi|+|z-y|}{|y-\xi|}\leq 1+ 2\sqrt\rho.$$
 Exchanging the role of $y$ and $z$ we find the same estimate for $\frac{|y-\xi|}{|z-\xi|}$.  
Therefore 
$$
\left|\log\bra{\frac{|z-\xi|}{|y-\xi|}}\right|\leq  \log(1+ 2\sqrt\rho) \leq 2 \sqrt\rho.
$$
The proof follows immediately. 

As for the proof of $iii)$ it suffices to notice that as $|x-\xi|\to\infty$ we have
$$\frac{|z-\xi|}{|y-\xi|}\to 1 \quad \text{uniformly for } y,z\in B_\rho(x)\subset B_1(x).$$
\end{proof}

\end{document}